\documentclass[11pt]{article}

\RequirePackage[OT1]{fontenc}
\RequirePackage[colorlinks,citecolor=blue,urlcolor=blue]{hyperref}

\usepackage{amsmath, amssymb, amsthm,amsfonts} 
\usepackage[english]{babel}
\usepackage{bbm}
\usepackage{graphicx}
\usepackage{url}
\usepackage{epstopdf}


\usepackage[margin=1.15in]{geometry}
\usepackage[toc,page]{appendix}
\usepackage[utf8]{inputenc}
\usepackage[T1]{fontenc}
\usepackage{authblk}
\usepackage{amssymb}
\usepackage{float}
\usepackage{tikz}
\usetikzlibrary{arrows,calc,shapes,decorations.pathreplacing,patterns,chains,shapes.multipart}
\usepackage{hyperref}
\hypersetup{pdftex,colorlinks=true,allcolors=blue}
\usepackage{caption}
\usepackage{pgfplots}
\usepackage{subcaption}
\usepackage{enumitem}

\newenvironment{customlegend}[1][]{%
  \begingroup
  \csname pgfplots@init@cleared@structures\endcsname
  \pgfplotsset{#1}%
}{%
  \csname pgfplots@createlegend\endcsname
  \endgroup
}%
\def\addlegendimage{\csname pgfplots@addlegendimage\endcsname}

\newtheorem{theorem}{Theorem}
\newtheorem{lemma}[theorem]{Lemma}
\newtheorem{proposition}[theorem]{Proposition}
\newtheorem{corollary}[theorem]{Corollary}

\newtheorem*{theorem*}{Theorem}

\theoremstyle{remark}

\newtheorem{remark}{Remark}
\newtheorem*{remark*}{Remark}

\def\P{\mathrm{P}}
\def\E{\mathrm{E}}
\def\Var{\mathrm{Var}}
\def\Cov{\mathrm{Cov}}

\DeclareMathOperator*{\argmax}{arg\,max}

\newcommand{\apost }{'}
\newcommand{\parrow}{\:\to_{\mathbb P}\:}
\newcommand{\darrow}{\:\to_{\rm d}\:}
\newcommand{\asarrow}{\:\to_{\rm as}\:}


\allowdisplaybreaks

\begin{document}

\title{Estimating the input of a L\'evy queue by Poisson sampling of the workload process\footnote{To appear in Bernoulli.}}

\author[1,2]{Liron Ravner}
\author[2]{Onno Boxma}
\author[1]{Michel Mandjes}
\affil[1]{\small{University of Amsterdam}}
\affil[2]{\small{Eindhoven University of Technology}}

\date{\today}
\maketitle

\begin{abstract}
This paper aims at semi-parametrically estimating the input process to a L\'evy-driven queue by sampling the workload process at Poisson times. We construct a
method-of-moments based estimator for the L\'evy process' characteristic exponent. This method exploits the known distribution of the workload sampled at an exponential time, thus taking into account the dependence between subsequent samples. Verifiable conditions for consistency and asymptotic normality are provided, along with explicit expressions for the asymptotic variance. The method requires an intermediate estimation step of estimating a constant (related to both the input distribution and the sampling rate); this constant also features in the asymptotic analysis. For subordinator L\'evy input, a partial MLE is constructed for the intermediate step and we show that it satisfies the consistency and asymptotic normality conditions. For general spectrally-positive L\'evy input a biased estimator is proposed that only uses workload observations above some threshold; the bias can be made arbitrarily small by appropriately choosing the threshold.
\end{abstract}

\section{Introduction}\label{sec:intro}
To optimally design and control queueing systems, it is of great importance to have reliable estimates of the model primitives such as the arrival rate and the service-time distribution. In many situations, however, one cannot observe the users' interarrival times and service times; instead, one only has periodic observations of the workload process. This leaves us with the challenging task of inferring the model primitives from such workload observations. A complicating factor is that subsequent observations are dependent. As a consequence, standard estimation techniques (such as maximum-likelihood estimation) are not applicable in almost all queuing systems. 

This work focuses on the problem described above in the setting of a single-server queue with work arriving according to a {\it spectrally-positive L\'evy} inputs process, i.e., a L\'evy process $X(\cdot)$ with only positive jumps. The resulting model includes the M/G/1 queue as a special case; then the input process is a Compound Poisson (CP) process. The set of spectrally-positive L\'evy input processes, however, is substantially broader than CP: it covers a broad range of other relevant processes, such as Brownian motion, the Gamma process and the Inverse Gaussian process. The L\'evy input process is uniquely defined by its {\it characteristic exponent} function $\varphi(\alpha):=\log\E e^{-\alpha X(1)}$, with $\alpha \ge 0$ \cite{book_K2006}. We assume that $\varphi(\cdot)$ is unknown and it is therefore the objective to estimate this function. For a detailed account of L\'evy-driven queues we refer to \cite{book_DM2015}.

\vspace{2mm}

\noindent {\it Approach.}
In our approach we exploit the fact that, given some initial workload, there is an explicit expression for the Laplace-Stieltjes Transform (LST) of the workload after an exponentially distributed time; see, e.g.\ \cite{KBM2006}. When the workload is sampled at random times according to an independent Poisson process (with some specified rate $\xi$), this LST enables us to deal with the dependence between the subsequent workload observations.In particular, method of moments estimation is carried out by equating the empirical and conditional expected LST, thus obtaining so called {\it Z-estimators} \cite{book_vdV1998}. Our setting can be viewed as non-parametric in the sense that we do not make specific distributional assumptions about the input process except that it is a spectrally-positive L\'evy process. However, we use parametric estimation techniques for pointwise estimation of the exponent function.

There is the complication that, due to the specific form of the above-mentioned LST, it is necessary to first estimate $\varphi^{-1}(\xi)$, i.e., the inverse of $\varphi$ at the sampling rate $\xi$. This intermediate step is also crucial for determining the asymptotic performance of the estimator for $\varphi(\alpha)$. In particular, we show that 
consistency and asymptotic normality in the intermediate step (to estimate $\varphi^{-1}(\xi)$, that is) carry over to consistency and asymptotic normality in the main estimation step (to estimate $\varphi(\alpha)$). For the case of subordinator input (i.e., a L\'evy process whose paths are non-decreasing almost surely) we construct a maximum-likelihood procedure for estimating $\varphi^{-1}(\xi)$, which we show to be consistent and asymptotically normal. This result therefore covers the special cases of the driving L\'evy process being CP, a Gamma process and an Inverse Gaussian process. For general spectrally-positive L\'evy input, i.e., also including a Brownian motion component, we provide an estimation method for $\varphi^{-1}(\xi)$ that uses workload observations only above some threshold; this procedure is inherently biased, but the bias can be controlled by choosing the threshold parameter appropriately. 

Later in this introduction we describe in detail the differences with existing estimation approaches. A major difference is that previous papers tend to aim at using workload observations to estimate quantities pertaining to the steady-state workload distribution, from which the model primitives are then inferred, whereas we explicitly use the knowledge of the dependence between two subsequent workload observations. 

\vspace{2mm}

\noindent {\it Main contributions.}
We now summarize the main contributions of our work.
\begin{itemize}
\item Thus far, the literature has mostly focused on the case of Compound Poisson input, 
whereas we address the larger class of spectrally-positive L\'evy processes. 
\item 
In our approach we do not rely on estimating quantities pertaining to the steady-state workload distribution, but rather explicitly use the system's correlation structure. 
\item We provide verifiable conditions for consistency and asymptotic normality of our estimator (including an expression for the asymptotic variance). These conditions are verified for the case of subordinator input, thus implying the asymptotic properties for the special case of CP. When $\varphi(\alpha)$ is estimated for multiple values of $\alpha$ simultaneously, we provide the corresponding asymptotic covariances.
\item The idea to use a Z-estimator in this L\'evy-driven setting is novel. As mentioned, our approach requires knowledge of $\varphi^{-1}(\xi)$, for which we present an estimation procedure.
\item We present simulation examples that explore the accuracy of the procedure. 
These show that the accuracy is generally good, but degrades for large $\alpha$ (because the asymptotic variance of the error increases with $\alpha$). We also observe that the accuracy is best for low $\xi$, which is to be expected: when $\xi$ is high, the workload process is very frequently sampled, such that new observations offer hardly any new information. As mentioned, for non-subordinator input the estimator of $\varphi^{-1}(\xi)$ is biased, but our experiments show that the accuracy is high, even for a relatively low threshold.
\end{itemize}

\vspace{2mm}

\noindent {\it Background and related literature.}
Observing a system at Poisson times is commonly known as {\it Poisson probing}. In communication networks it has been used to estimate (steady-state) packet delays in a network with unknown traffic input \cite{AJP2013, patent_SM2002}. In a practical implementation of Poisson probing, very small jobs are sent through the system and their delays are recorded. 
This method relies on the {\it Poisson Arrivals See Time Averages} (PASTA) property: the stationary distribution coincides with the distribution observed at Poisson instants; theoretical justification can be found in e.g.\ \cite{GMW1993,Y1992}. 

In the M/G/$1$ case the queue is fed by a Poisson arrival process with rate $\lambda$, whereas the service times are i.i.d.\ random variables $G_i\sim G$ with LST $G^*$. In \cite{BP1999} a non-parametric approach is proposed for estimating $\lambda$ and the LST of $G$ using observations of the lengths of the busy and idle periods. This procedure overcomes the issue of dependent observations as busy periods are independent, but it may require a very long observation period to get a large sample of busy periods, especially in heavily loaded systems.
The estimator exploits the fixed-point equation $B^*(\alpha)=G^*(\alpha+\lambda(1-B^*(\alpha)))$, where $B^*$ is the LST of the busy period, and uses an estimator $\hat{G}_n(\alpha)$ based on the empirical LST of the busy periods (with $n$ observations). The estimator is consistent and asymptotically normal: $\sqrt{n}(\hat{G}_n(\cdot)-G^*(\cdot))$ converges in distribution to a Gaussian process. Reference \cite{HP2006} provides a nonparametric procedure for estimating the service-time distribution, using an equidistant sample of the workload. It consists of two steps: (1) estimating the residual service-time distribution by inverting the convolution workload formula, $F_V=(1-\rho)\sum_{k=0}^\infty \rho^k G_e^{\star k}$, where $F_V$ is the stationary cdf of the workload and $G_e^{\star k}$ is the $k$-fold convolution of the residual service time distribution, and (2) then estimating the service-time distribution itself. As the first step relies on the stationary workload distribution, the procedure does not use any knowledge on the dependence between observations. Nevertheless, the estimation of the residual service-time distribution is shown to be consistent and asymptotically normal under some conditions on the system load. For the second step only heuristic methods are suggested. The numerical experiments in \cite{HP2006} show that the procedure performs well for moderately loaded systems but often performs poorly for systems with low or high load. 

A general framework for generalized method-of-moments (GMM) estimators for Markov processes sampled according to a Poisson process, that is possibly state and time dependent, is provided in \cite{DG2004}. As it turns out, the fact that in our setup the quantity $\varphi^{-1}(\xi)$ has to be estimated entails that we cannot use this approach. In Section \ref{sec:discussion} we further discuss the relation of our work with the GMM-based procedure of \cite{DG2004} in the context of our model.

Non-parametric estimation of the stationary waiting time distribution for a GI/G/$1$ queue was studied in \cite{P1994}. If the number of arrivals during a service time can be observed, then estimation of the service-time distribution reduces to the problem of decompounding Poisson sums, as studied in \cite{BG2003}. As is often the case, when moving from a single-server queue to its infinite-server counterpart, the analysis greatly simplifies. The object of interest in this line of research is typically the service time distribution, and we refer the reader to \cite{BP1999,BNW2013,G2016,G2018,SW2015}. A related approach appeared in \cite{NTV2006} where an estimator for the arrival rate is derived by relying on counting the number of arrivals during a busy period. In \cite{RTP2007} diffusion approximations are used. 

Estimation of the model primitives from queue observations also has important managerial applications, for example in the contexts of demand estimation \cite{AP2005} and dynamic pricing \cite{AA2013}. In \cite{AJPW2016} the traffic intensity of a non-homogeneous-in-time queue was estimated using a (non-Poisson) probing scheme that depends on the service time of the probes. This list of papers on statistical inference of queueing systems is by no means exhaustive; see \cite{ANP2017} for an extensive bibliography of this strand of research. 

In the present paper we estimate (the characteristic exponent of) a L\'evy process by observing a L\'evy-driven queue. Various papers consider the counterpart in which the L\'evy process itself is observed. We refer to e.g.\ \cite{B2011,NR2009}; \cite{G2009} studies the case of a L\'evy process without small jumps. 

\vspace{2mm}

\noindent{\it Paper organization.}
The remainder of the paper is organized as follows. Section \ref{sec:model} introduces the queueing model and sampling scheme. In Section \ref{sec:phi_z_estimator} the estimator for $\varphi(\alpha)$ is defined and conditions for consistency and asymptotic normality are provided. The following two sections deal with the intermediate estimation step of $\varphi^{-1}(\xi)$. In Section \ref{sec:MG1_MLE} an MLE is constructed for the subordinator case and it is shown to satisfy the consistency and asymptotic normality conditions. Section \ref{sec:levy} presents an approach for the intermediate step that relies only on workload observations above some threshold. Further, a bound is derived for the bias and explicitly computed in the limit when the sample size grows. In Section \ref{sec:simulation} we numerically analyze the performance of the estimators using simulated data. 
Section~\ref{sec:discussion} contains conclusions and suggestions for further research.
Finally, proofs and further technical details are provided in the appendices.

\section{Model and preliminaries}\label{sec:model}

We study a queue with a spectrally-positive L\'evy input process $J(t)$. The distribution of the process can be represented by the exponent function,
\begin{equation}
\log\E e^{-\alpha J(1)}=-c\alpha+\frac{1}{2}\sigma ^2 \alpha^2-\int_{(0,\infty)}(1-e^{-\alpha x})\nu({\rm d}x)\ ,
\end{equation}
where $c$ and $\sigma$ are non-negative constants and $\nu$ is the L\'evy jump measure. The output of the system is a unit-rate negative linear drift. This means that the net input process is $X(t)=J(t)-t$, which is also a spectrally-positive L\'evy process. For example, if $J(t)$ is CP with rate $\lambda$ and jump size LST $G^*$, then the exponent function of the net input is $\varphi(\alpha)=\lambda\left(G^*(\alpha)-1\right)+\alpha$.

Our objective is to estimate the (unknown) exponent function of the net input process, $\varphi(\alpha):=\log\E e^{-\alpha X(1)}$, with $\alpha\ge 0$. Let $(V(t))_{t\in{\mathbb R}}$ denote the workload process, which can be represented as the net input process reflected at zero: $V(t)=X(t)+\max\{V(0),L(t)\}$, where $L(t):=-\inf_{0\leq s\leq t}X(s)$. If $\E X(1)<0$, then the stationary distribution of $V:=V(\infty)$ is given by the {\it generalized Pollaczek-Khintchine} (GPK) formula \cite[p.\ 27]{book_DM2015}:
\begin{equation}\label{eq:GPK}
\E e^{-\alpha V}=\frac{\alpha \varphi\apost (0)}{\varphi(\alpha)}\ .
\end{equation} 
Furthermore, if $\sigma=0$ then the input process $J(t)$ is a subordinator (i.e., an almost surely non-decreasing L\'evy process), and the stationary probability that the workload is 0 equals 
\begin{equation}\label{eq:P_idle}
\P(V=0)=\varphi\apost (0)=-\E X(1)\ .
\end{equation}
This is the is case if, for example, the input is CP (as in the M/G/$1$ model). Importantly,  however, it also holds for spectrally-positive L\'evy input without a Brownian component, i.e., a L\'evy jump process with non-negative jumps (covering e.g.\ the Gamma process and the Inverse Gaussian process). 

All asymptotic results presented in this paper require that $\E X(1)<0$, but 
in principle the underlying estimation procedure can still be performed when this assumption is not valid. In relation to the case of an unstable queue (i.e., if $\E X(1)\geq 0$), it is noted that, while the estimators are anticipated to perform reasonably well, other estimation procedures are likely to be more appropriate due to the fact that (after an initial transient period) one effectively observes the (non-reflected) net input process $(X(t))_{t\in{\mathbb R}}$.

From now on we assume that the linear drift component $c$ of the input process $(J(t))_{t\in{\mathbb R}}$ is known. Without loss of generality we normalize this rate to $c=0$, so that the linear drift of the net input process $(X(t))_{t\in{\mathbb R}}$ is $-1$. If both of the net input process and net output rate are unknown then there are evident problems of identification, and the estimation problem is not well defined; such cases are outside the scope of this paper.

Suppose that the workload is observed at $n$ random times, sampled according to an independent Poisson process with rate $\xi$. Denote these times by $\{T_i\}_{i=1}^n$ such that $T_i-T_{i-1}$ are i.i.d.\ samples from an exponential distribution with parameter $\xi$ (putting $T_0:=0$). The sampling procedure is illustrated for the workload of an M/G/$1$ queue in Figure \ref{fig:workload}.

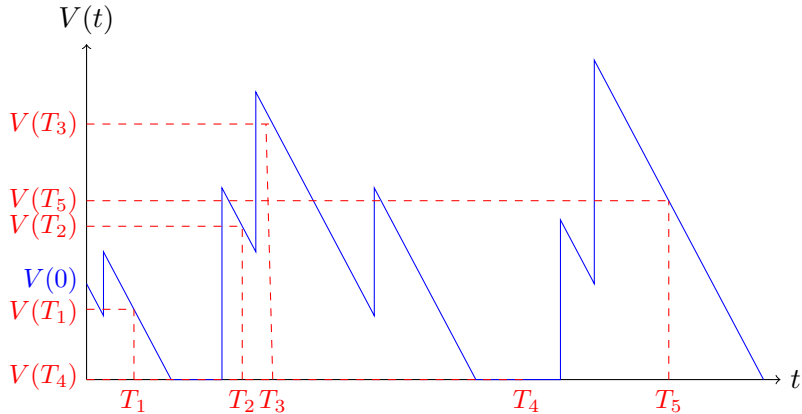
\begin{figure}
\centering
\begin{tikzpicture}[x=0.45cm,y=0.85cm]
 \def\xmin{0}
 \def\xmax{20.5}
 \def\ymin{0}
 \def\ymax{5.25}
 \draw[->] (\xmin,\ymin) -- (\xmax,\ymin) node[right] {$t$} ;
 \draw[->] (\xmin,\ymin) -- (\xmin,\ymax) node[above] {$V(t)$} ;
\foreach \x in {} {
 \node at (\x,\ymin) [below] {\x};
 \draw[-] (\x,\ymin) -- (\x,{\ymin-0.05});
 }
\foreach \y in {} {
 \node at (\xmin,\y) [left] {\y};
 \draw[-] (\xmin,\y) -- ({\xmin-0.05},\y);
 }
\draw[- ,blue] (0,1.5) -- (0.5,1) -- (0.5,2) -- (2.5,0) -- (4,0) -- (4,3) -- (5,2) -- (5,4.5) -- (8.5,1) -- (8.5,3) -- (11.5,0) -- (14,0) -- (14,2.5) -- (15,1.5) -- (15,5) -- (20,0);

 \node[draw=none,left ,color=blue] at (0.05,1.55) {\small{$V(0)$}};

 \draw[-,dashed,red] (1.4,0) -- (1.4,1.1);
 \draw[-,dashed,red] (0,1.1) -- (1.4,1.1);
 \node[draw=none,below,color=red] at (1.4,0) {\small{$T_1$}};
 \node[draw=none,left,color=red] at (0.05,1.05) {\small{$V(T_1)$}};

 \draw[-,dashed,red] (4.6,0) -- (4.6,2.4);
 \draw[-,dashed,red] (0,2.4) -- (4.6,2.4);
 \node[draw=none,below,color=red] at (4.6,0) {\small{$T_2$}};
 \node[draw=none,left,color=red] at (0.05,2.4) {\small{$V(T_2)$}};

 \draw[-,dashed,red] (5.5,0) -- (5.3,4);
 \draw[-,dashed,red] (0,4) -- (5.5,4);
 \node[draw=none,below,color=red] at (5.5,0) {\small{$T_3$}};
 \node[draw=none,left,color=red] at (0.05,4) {\small{$V(T_3)$}};

 \draw[-,dashed,red] (0,0) -- (13,0);
 \node[draw=none,below,color=red] at (13,0) {\small{$T_4$}};
 \node[draw=none,left,color=red] at (0.05,0.05) {\small{$V(T_4)$}};

  \draw[-,dashed,red] (17.2,0) -- (17.2,2.8);
 \draw[-,dashed,red] (0,2.8) -- (17.2,2.8);
 \node[draw=none,below,color=red] at (17.2,0) {\small{$T_5$}};
 \node[draw=none,left,color=red] at (0.05,2.8) {\small{$V(T_5)$}};
 
 \end{tikzpicture}
\caption{The workload $V(t)$ of an M/G/$1$ queue is sampled according to the probing process at times $T_1,T_2,\ldots$. The jump sizes and interarrival times are not observed. Inference is made based on the sample $(V(T_1),V(T_2),\dots)$.} \label{fig:workload}
\end{figure}

The distribution of the workload at $T_i$ conditional on the workload at sample $T_{i-1}$ is 
\begin{equation}\label{eq:V_LST_transient}
\E\left[e^{-\alpha V(T_i)}|V(T_{i-1})\right]=\frac{\xi}{\xi-\varphi(\alpha)}\left(e^{-\alpha V(T_{i-1})}-\frac{\alpha}{\psi(\xi)}e^{-\psi(\xi)V(T_{i-1})}\right)\ ,
\end{equation}
see, e.g.\ \cite{KBM2006} and \cite[Ch. IV]{book_DM2015};
here $\psi$ is the inverse of $\varphi$. For $\alpha=\psi(\xi)$, the LST of  \eqref{eq:V_LST_transient} can still be computed by applying L'H\^opital's rule:
\begin{equation}\label{eq:V_LST_transient_psi}
\E\left[e^{-\psi(\xi) V(T_i)}|V(T_{i-1})\right]=\frac{\xi e^{-\psi(\xi) V(T_{i-1})}\left(V(T_{i-1})+\frac{1}{\psi(\xi)}\right)}{\varphi'(\psi(\xi))}\ .
\end{equation}
Noting that $\varphi(0)=0$, it can be verified that the conditional expected workload at observation time $T_i$ is
\begin{equation}\label{eq:EV_transient}
\E\left( V(T_i)|V(T_{i-1})\right)=V(T_{i-1})+\frac{e^{-\psi(\xi)V(T_{i-1})}}{\psi(\xi)}-\frac{\varphi\apost (0)}{\xi}\ ,
\end{equation}
and the conditional second moment is
\begin{equation}\label{eq:EV2_transient}
\E\left[ V(T_i)^2|V(T_{i-1})\right]=V(T_{i-1})^2-2\frac{\varphi\apost (0)}{\xi}\left(V(T_{i-1})+\frac{e^{-\psi(\xi)V(T_{i-1})}}{\psi(\xi)}-\frac{\varphi\apost (0)}{\xi}\right)+\frac{\varphi{''}(0)}{\xi}\ .
\end{equation}
The above holds for any L\'evy-driven queue with spectrally-positive input. 
In the {subordinator} case, essentially due to the fact that the workload process spends time at zero with positive probability, by taking $\alpha\to\infty$ in \eqref{eq:V_LST_transient},
it turns out that
\begin{equation}\label{eq:P_idle_transient}
\P(V(T_i)=0|V(T_{i-1})=v)=\frac{\xi}{\psi(\xi)}e^{-\psi(\xi)v}\ .
\end{equation}
The above results are in terms of LSTs;
the joint distribution of the sample can in principle be evaluated by numerical inversion of the LSTs involved, but this is computationally demanding. Therefore, in the sequel we use a method-of-moments type estimator that requires finding the root of a function equating the empirical LST or the empirical moments to their respective expectations. 
The term $\psi(\xi)=\varphi^{-1}(\xi)$, which depends on the net input process as well as the sampling rate, appears in the above expressions. Estimating this quantity plays a crucial role in this work.

\vspace{2mm}

\noindent {\it Notation.}
Much of this work involves large sample asymptotic analysis, and as such we will frequently make use of the following notations: ${\parrow}$ for convergence in probability, ${\asarrow}$ for almost sure convergence, ${\darrow}$ for convergence in distribution, and $\approx$ to indicate that two random sequences have the same limit in probability.

\section{Semiparametric estimation of $\varphi$}\label{sec:phi_z_estimator}

This section proposes a Z-estimator for $\varphi(\alpha)$ for a given $\alpha>0$. We then study the asymptotic properties of the estimator, in that we provide conditions for consistency and asymptotic normality given we know $\psi(\xi)$; the estimation of $\psi(\xi)$ is the topic of the next section. For brevity, we throughout denote $V_i:=V(T_i)$.

Given a sample of workload observations at Poisson times, $(V_1,\ldots,V_n)$, let $\psi_n$ denote a series of estimators for $\psi(\xi)$. In particular, $\psi_n$ is a function $\mathbb{R}_+^n\to\mathbb{R}_+$ of the first $n$ observations $(V_1,\ldots,V_n)$. We do not make any assumptions on $\psi_n$ for now. Later, however, we impose conditions on this sequence that enable the evaluation of the  asymptotic performance of the estimators, and in addition a specific construction for the special case of subordinator input. 

An estimator for $\varphi(\alpha)$ is derived by equating the conditional LST \eqref{eq:V_LST_transient} and the empirical LST and solving
\begin{equation}\label{eq:phi_z_equation}
\frac{1}{n}\sum_{i=1}^n e^{-\alpha V_i}=\frac{1}{n}\sum_{i=1}^n \frac{\xi}{\xi-\varphi(\alpha)}\left(e^{-\alpha V_{i-1}}-\frac{\alpha}{\psi_n}e^{-\psi_n V_{i-1}}\right)\ .
\end{equation}
Letting
\begin{equation}\label{eq:phi_z_alpha}
Z_i(\alpha):=e^{-\alpha V_i}- \frac{\xi}{\xi-\varphi(\alpha)}\left(e^{-\alpha V_{i-1}}-\frac{\alpha}{\psi_n}e^{-\psi_n V_{i-1}}\right)\ ,
\end{equation}
the estimator is given by the root of $\frac{1}{n}\sum_{i=1}^n Z_i(\alpha)$. We rearrange \eqref{eq:phi_z_equation} to
\begin{equation}
\varphi(\alpha)=\xi\left[1-\left.{\frac{1}{n}\sum_{i=1}^n\left(e^{-\alpha V_{i-1}}-\frac{\alpha}{\psi_n}e^{-\psi_nV_{i-1}}\right)}\right/{\frac{1}{n}\sum_{i=1}^n e^{-\alpha V_i}}\right]\ ,
\end{equation}
yielding the estimator
\begin{equation}\label{eq:phi_z_estimator}
\hat{\varphi}_n(\alpha;\psi_n)=\frac{\xi\alpha}{\psi_n}\left.\left({\frac{\psi_n}{\alpha n}\left(e^{-\alpha V_n}-e^{-\alpha V_0}\right)+\frac{1}{n}\sum_{i=1}^n e^{-\psi_n V_{i-1}}}\right)\right/{\frac{1}{n}\sum_{i=1}^n e^{-\alpha V_{i}}}\ .
\end{equation}
This type of estimator is generally referred to as a Z-estimator \cite[Ch. V]{book_vdV1998}.
Note that in our case the samples are dependent, thus preventing direct use of classical results on consistency and asymptotic normality. Another justification for this estimator is that it coincides with the estimator minimizing the sum of quadratic deviations of the empirical LST from their respective conditional expectations, i.e. minimizing the conditional mean square error (MSE).


\begin{remark}\label{rem:moment_estimation}
This method can also be applied for parametric estimation of the moments. For example, an estimator for the first moment of the net input process, i.e., $\theta:=\E X(1)$, can be derived. This is done by taking the derivative in \eqref{eq:phi_z_estimator}:
\begin{equation}\label{eq:phi_d_hat}\theta_n=
-\hat{\varphi}\apost _n(0;\psi_n)=-\lim_{\alpha\downarrow 0}\frac{{\rm d}}{{\rm d}\alpha}\hat{\varphi}_n(\alpha;\psi_n)=-\frac{\xi}{n\psi_n}\sum_{i=1}^n e^{-\psi_n V_{i-1}}\ .
\end{equation}
Alternatively, $\hat{\varphi}\apost _n(0;\psi_n)$ could be estimated directly from \eqref{eq:EV_transient}; then the estimation equation would also include the term $({V_n-V_0})/{n}$. Clearly, as $n$ grows both estimators coincide. The same procedure can be followed for the (joint) estimation of higher moments. The estimators of $k$-th moment $\E (X(1)^k)$ follows from
\begin{equation}
\theta_n^{(k)}=(-1)^{k}\hat{\varphi}^{(k)}_n(0;\psi_n)\ .
\end{equation} 
\end{remark}

\begin{remark}\label{rem:alpha=psi_xi}
Recall that at $\alpha=\psi(\xi)$ the conditional LST $\E\left[e^{-\alpha V(T_i)}|V(T_{i-1})\right]$ is not given directly by \eqref{eq:V_LST_transient} but can be computed using L'H\^opital's rule. 
In practice this case will not play a role, though, as $\psi_n=\psi(\xi)$ will not occur in our setting in which various continuous random variables play a role. Specifically, while $\hat{\varphi}_n(\psi_n;\psi_n)=\xi$, by construction of the Z-estimator, the event $\hat{\varphi}_n(\psi(\xi);\psi_n)=\xi$ has zero probability to occur.
\end{remark}

\subsection{Consistency}\label{sec:phi_consistent}

The main result presented in this subsection is Theorem \ref{thm:phi_consistency}, claiming that consistency of the estimator $\psi_n$ implies consistency of the estimator $\hat{\varphi}_n(\alpha;\psi_n)$ for all $\alpha>0$. The proof relies on first establishing the LLN for the empirical LST using the GPK formula \eqref{eq:GPK} for the stationary distribution of the workload observations at Poisson sampling times. The result is then established by applying Slutsky's lemma and the continuous mapping theorem to the estimator given in \eqref{eq:phi_z_estimator}.

\begin{theorem}\label{thm:phi_consistency}
If $\psi_n{\parrow}\psi(\xi)$ as $n\to\infty$ then for every $\alpha>0$ the pointwise estimator $\hat{\varphi}_n(\alpha;\psi_n)$ is consistent: $\hat{\varphi}_n(\alpha;\psi_n){\parrow}\varphi(\alpha)$ as $n\to\infty$.
\end{theorem}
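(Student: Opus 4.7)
The plan is to decompose the estimator \eqref{eq:phi_z_estimator} into pieces whose probability limits can be identified separately, and then assemble them via Slutsky's lemma and the continuous mapping theorem. Writing
$$\hat{\varphi}_n(\alpha;\psi_n) = \frac{\xi\alpha}{\psi_n}\cdot\frac{A_n + B_n}{C_n},$$
with $A_n := \frac{\psi_n}{\alpha n}(e^{-\alpha V_n}-e^{-\alpha V_0})$, $B_n := \frac{1}{n}\sum_{i=1}^n e^{-\psi_n V_{i-1}}$, and $C_n := \frac{1}{n}\sum_{i=1}^n e^{-\alpha V_i}$, it suffices to compute the probability limits of $A_n$, $B_n$, $C_n$. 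The boundary term $A_n$ is $O_{\P}(1/n)$, since $|e^{-\alpha V_n}-e^{-\alpha V_0}|\le 2$ and $\psi_n$ is bounded in probability. For $C_n$, the sequence $(V_i)$ is a stationary ergodic Markov chain (under $\E X(1)<0$ and Poisson sampling, with stationary law given by PASTA), so the ergodic LLN yields $C_n\parrow \E e^{-\alpha V} = \alpha\varphi\apost(0)/\varphi(\alpha)$ by the GPK formula \eqref{eq:GPK}.

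The only genuine subtlety concerns $B_n$, where the summands depend on $n$ through the plug-in $\psi_n$. The natural way forward is to split
$$B_n = \frac{1}{n}\sum_{i=1}^n e^{-\psi(\xi) V_{i-1}} + \frac{1}{n}\sum_{i=1}^n \bigl(e^{-\psi_n V_{i-1}} - e^{-\psi(\xi) V_{i-1}}\bigr),$$
where the first average converges to $\E e^{-\psi(\xi) V} = \psi(\xi)\varphi\apost(0)/\xi$ by the same ergodic LLN. For the remainder, I would work on the event $\{\psi_n \ge \psi(\xi)/2\}$, whose probability tends to one by assumption, and use the mean value theorem together with the uniform bound $\sup_{v\ge 0} v\, e^{-\psi(\xi) v/2} =: K < \infty$ to obtain
$$\bigl|e^{-\psi_n v} - e^{-\psi(\xi) v}\bigr| \le K\,|\psi_n - \psi(\xi)|$$
uniformly in $v\ge 0$. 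Averaging, the remainder is bounded by $K|\psi_n-\psi(\xi)|\parrow 0$, so $B_n \parrow \psi(\xi)\varphi\apost(0)/\xi$.

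Finally, assembling the three limits via the continuous mapping theorem (noting $\varphi(\alpha)>0$ for $\alpha>0$, so no division-by-zero issue arises in $C_n$) gives
$$\hat{\varphi}_n(\alpha;\psi_n) \parrow \frac{\xi\alpha}{\psi(\xi)}\cdot\frac{\psi(\xi)\varphi\apost(0)/\xi}{\alpha\varphi\apost(0)/\varphi(\alpha)} = \varphi(\alpha),$$
which is the claim. The main obstacle, as indicated, is the uniform-in-$v$ control of $e^{-\psi_n v}$: one cannot simply invoke pointwise continuity because the exponent varies with $n$, and the bound $K$ above is what allows the ergodic LLN to be combined cleanly with the consistency of $\psi_n$. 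Everything else reduces to standard Slutsky-style manipulations.
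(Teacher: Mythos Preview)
Your proof is correct and follows the same overall decomposition as the paper: identify the limits of the numerator and denominator separately via an ergodic/PASTA-type LLN and the GPK formula, note that the boundary term vanishes, and assemble with Slutsky and the continuous mapping theorem.

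The one place where the two arguments differ is the treatment of $B_n=\frac{1}{n}\sum_{i=1}^n e^{-\psi_n V_{i-1}}$. The paper defines an auxiliary process $H(t)$ with $H(t_n)=\psi_n$, invokes Slutsky to get $\psi_n V(t_n)\parrow \psi(\xi)V$, and then appeals to PASTA for the average; this step is somewhat compressed, since PASTA is a statement about time averages of a fixed functional whereas here the functional depends on $n$ through $\psi_n$. Your route---splitting off the sum with the true $\psi(\xi)$ and bounding the remainder uniformly in $v$ via the mean value theorem on the high-probability event $\{\psi_n\ge\psi(\xi)/2\}$---is more explicit and self-contained, and makes it transparent why the plug-in does not spoil the LLN. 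Both arguments arrive at the same limit $\psi(\xi)\varphi'(0)/\xi$; yours simply isolates the uniform-continuity ingredient that the paper leaves implicit.
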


\begin{proof}
If $\E X(1)<0$, then $(V(t))_{t\in{\mathbb R}}$ has a stationary distribution. By PASTA the limit distribution at Poisson sampling instants equals the stationary distribution of $(V(t))_{t\in{\mathbb R}}$, which we denote by $V$.
Denote the Poisson sampling process by $N(t)$, with $t_n:=\inf\{t:\ N(t)=n-1\}$. The choice of $n-1$ in the definition of the counting process is made for the sake of brevity in the following analysis due to the observations always appearing as $V_{i-1}$ in the estimation equation \eqref{eq:phi_z_equation}. Then, as $t_n\to\infty$ when $n\to\infty$, Equation \eqref{eq:GPK} implies that for any $\beta\geq 0$,
\begin{equation}\label{eq:PK_Vi}
\frac{1}{n}\sum_{i=1}^n e^{- \beta V_{i-1}}=\frac{1}{N(t_n)}\sum_{i=1}^{N(t_n)} e^{- \beta V(t_i)}{\parrow} \E e^{-\beta V}=\frac{\beta\varphi\apost (0)}{\varphi(\beta)}\ ,
\end{equation} 
as $n\to\infty$.
We define the right-continuous process 
\begin{equation}
H(t):=\sum_{n=1}^\infty \mathbf{1}(t_{n}\leq t<t_{n+1})\psi_n, \ t\geq 0\ ,
\end{equation}
and observe that $H(t_n)=\psi_n$ for every $n\geq 1$. By Slutsky's lemma \cite[p.\ 11]{book_vdV1998}, $\psi_n V(t_n)=H(t_n)V(t_n)\parrow\psi(\xi) V$ as $n\to\infty$. Hence, applying PASTA once more,
\begin{equation}\label{eq:PK_Vi}
\frac{1}{N(t_n)}\sum_{i=1}^{N(t_n)} e^{- H(t_n) V(t_i)}{\parrow} \frac{\psi(\xi)\varphi\apost (0)}{\varphi(\psi(\xi))}\ .
\end{equation} 
Furthermore, the term $\frac{\psi_n}{\alpha n}\left(e^{-\alpha V_n}-e^{-\alpha V_0}\right)$ converges to zero almost surely if the stability condition $\E X(1)<0$ holds. Next we can use the fact that $\varphi(\psi(\xi))=\varphi(\varphi^{-1}(\xi))=\xi$ and apply the continuous mapping theorem (see \cite{book_vdV1998}, p. 7) to \eqref{eq:phi_z_estimator}, thus obtaining
\begin{equation}
\hat{\varphi}_n(\alpha;\psi_n){\parrow}\left.\left({\alpha\xi\frac{\varphi^{-1}(\xi)\varphi\apost (0)}{\varphi(\varphi^{-1}(\xi))}}\right)\right/\left({\varphi^{-1}(\xi)\frac{\alpha\varphi\apost (0)}{\varphi(\alpha)}}\right)=\varphi(\alpha)\ .
\end{equation}
Thus, the pointwise estimator of $\varphi(\alpha)$ is consistent for every $\alpha>0$.
\end{proof}

\begin{remark}
Consider the case of parametric estimation of the (first and higher) moments, as described in Remark~\ref{rem:moment_estimation}. Using the same arguments it is straightforward to show that the estimators are consistent if $\psi_n$ is consistent. 
\end{remark}

\subsection{Asymptotic normality of $\hat{\varphi}_n$}\label{sec:phi_normal}
In this section we establish, under specific conditions, asymptotic normality of our estimator $\hat{\varphi}_n(\alpha;\psi_n)$ as $n\to\infty$: we show that $\sqrt{n}(\hat{\varphi}_n(\alpha;\psi_n)-\varphi(\alpha))$ converges to a zero-mean normal random variable, for any fixed $\alpha>0$. 
We find an expression for the corresponding asymptotic variance, so that one can assess the estimation error for large samples. 

We then extend this result to a multivariate setting: we establish the joint normality of the vector 
\begin{equation}
\sqrt{n}\left(\hat{\boldsymbol \Phi}_n({\boldsymbol\alpha})-\Phi({\boldsymbol\alpha})\right)\ ,
\end{equation}
where $\boldsymbol \alpha:=(\alpha_1,\ldots,\alpha_p)$, $\Phi({\boldsymbol\alpha}):=(\varphi(\alpha_1),\ldots,\varphi(\alpha_p))$ for some $p\in\mathbb{N}$, and $\hat{\boldsymbol \Phi}_n({\boldsymbol\alpha}):=(\hat{\varphi}_n(\alpha_1;\psi_n),\ldots,\hat{\varphi}_n(\alpha_p;\psi_n))$. Each of the estimation equations \eqref{eq:phi_z_equation} is solved independently for $\alpha_i$, but they rely on the same sample and are therefore dependent. 

We start the section by stating a Central Limit Theorem (CLT) for martingale difference sums. Lemma \ref{lemma:MCLT} below consists of two parts; both are essentially known results, that we customized to our needs. The first part is the univariate result of \cite[Thm. 3.2]{book_HH1980}. The second part is a special case of the multidimensional extension of \cite[Thm. 12.6]{book_H1997}; here we remark that the result  in \cite{book_H1997} relates to continuous time, but as we are only interested in the discrete-time case we present a more concise statement without the quadratic variation terms.

All convergence statements in the following are as $n\to\infty$; we therefore omit this specification from now on. 
\begin{lemma}\label{lemma:MCLT}\
\begin{enumerate}
\item[{\rm (a)}] 
Let $M_n$ be a discrete-time martingale with respect to a filtration $\{\mathcal{F}_n\}_{n\geq 0}$, and denote
\begin{equation}
Z_{ni}:=\frac{M_i-M_{i-1}}{\sqrt{\Var(M_n)}},\quad i=1,\ldots,n .
\end{equation}
If
\begin{equation}\label{eq:M_mclt1_finite}
\E\max_{1\leq i\leq n}Z_{ni}^2<\infty,\quad n=1,2,\ldots,
\end{equation}
and
\begin{equation}\label{eq:M_mclt2_maxtozero}
\max_{1\leq i\leq n}|Z_{ni}|{\parrow} 0  ,
\end{equation}
then $
\sum_{i=1}^n Z_{ni} {\darrow} \mathrm{N}(0,1).$
\item[{\rm (b)}] Let \[{\boldsymbol M}_n^\top:=\big(M_{n}^{(1)},\ldots,M_{n}^{(p)}\big)\] be a vector of discrete-time martingales,
and let \[{\boldsymbol Z}_n^\top:=\Big(\sum_{i=1}^n Z_{ni}^{(1)},\ldots,\sum_{i=1}^n Z_{ni}^{(p)}\Big),\]
with $Z_{ni}^{(k)}$, for $k=1,\ldots,p$, the respective normalized differences. If these objects satisfy all conditions of {\rm (a)}, and 
\begin{equation}\label{eq:M_mclt3_M2}
\lim_{n\to\infty} \E\left[{\boldsymbol Z}_n{\boldsymbol Z}_n^\top\right] = S \ ,
\end{equation}
where $S\in\mathbb{R}^{p\times p}$ is a positive-definite matrix with finite elements, then ${\boldsymbol Z}_n{\darrow} \mathrm{N}(0,S)$.
\end{enumerate}
\end{lemma}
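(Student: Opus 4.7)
The plan is to treat part (a) as a direct verification of the hypotheses of the classical Hall--Heyde martingale CLT and to pass to part (b) via the Cram\'er--Wold device.

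For part (a) I would invoke \cite[Thm.~3.2]{book_HH1980}. Its two ingredients are (i) convergence in probability of $\sum_{i=1}^n \E[Z_{ni}^2\mid \mathcal{F}_{i-1}]$ to a deterministic constant, and (ii) a conditional Lindeberg condition. The normalization $Z_{ni}=(M_i-M_{i-1})/\sqrt{\Var(M_n)}$ already gives $\sum_{i=1}^n \E Z_{ni}^2 = 1$; uniform integrability of $\max_i Z_{ni}^2$ supplied by \eqref{eq:M_mclt1_finite} combined with the negligibility \eqref{eq:M_mclt2_maxtozero} promotes this unconditional identity to the required in-probability convergence. The Lindeberg condition follows from the same two assumptions via
\begin{equation*}
\sum_{i=1}^n \E\bigl[Z_{ni}^2 \mathbf{1}\{|Z_{ni}|>\varepsilon\}\bigr]
\le \E\bigl[\max_{1\le i\le n} Z_{ni}^2\cdot \mathbf{1}\{\max_{1\le i\le n} |Z_{ni}|>\varepsilon\}\bigr] \to 0,
\end{equation*}
using uniform integrability and \eqref{eq:M_mclt2_maxtozero}. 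The theorem then yields $\sum_i Z_{ni}\darrow \mathrm{N}(0,1)$.

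For part (b) I would fix an arbitrary $\lambda \in \mathbb{R}^p$ and consider the scalar process $\lambda^\top {\boldsymbol M}_n = \sum_{k=1}^p \lambda_k M_n^{(k)}$. Linearity makes this a martingale with respect to the common filtration, with normalized increments $\lambda^\top(Z_{ni}^{(1)},\ldots,Z_{ni}^{(p)})^\top$. These inherit both hypotheses of part (a) from the coordinate arrays, by Cauchy--Schwarz in $\lambda$ for the $L^1$-bound on the maximum and by a union bound for the in-probability negligibility. Assumption \eqref{eq:M_mclt3_M2}, combined with the martingale-difference orthogonality $\E[Z_{ni}^{(k)} Z_{nj}^{(\ell)}]=0$ for $i\neq j$, pins the asymptotic unconditional variance of $\lambda^\top {\boldsymbol Z}_n$ at $\lambda^\top S\lambda$. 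Part (a) therefore gives $\lambda^\top {\boldsymbol Z}_n \darrow \mathrm{N}(0,\lambda^\top S \lambda)$; since $\lambda$ is arbitrary and $S$ is positive definite, the Cram\'er--Wold theorem delivers ${\boldsymbol Z}_n \darrow \mathrm{N}(0,S)$.

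The most delicate step is the upgrade in part (a) from the built-in unconditional variance identity to the conditional-variance convergence that \cite{book_HH1980} formally requires. I expect to handle this either by invoking the Lindeberg-form corollary of \cite[Thm.~3.2]{book_HH1980}, in which unconditional variances suffice once \eqref{eq:M_mclt1_finite}--\eqref{eq:M_mclt2_maxtozero} are in force, or by a separate argument that uses uniform integrability of $\max_i Z_{ni}^2$ together with the orthogonality of martingale differences to show that $\sum_i \E[Z_{ni}^2\mid \mathcal{F}_{i-1}]$ and its mean share the same probabilistic limit. Everything else in the verification is routine.
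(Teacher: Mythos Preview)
The paper does not actually prove this lemma: it simply records that part (a) is \cite[Thm.~3.2]{book_HH1980} and part (b) is a special case of the multivariate martingale CLT \cite[Thm.~12.6]{book_H1997}, and moves on. Your proposal therefore goes well beyond what the paper does; for (a) you are essentially unpacking the same Hall--Heyde reference the paper cites, while for (b) you take a genuinely different route, deducing the multivariate statement from (a) via Cram\'er--Wold rather than invoking a ready-made multidimensional CLT. Both routes are standard; yours is more self-contained, the paper's is shorter.

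One technical slip in your Cram\'er--Wold reduction: the quantity $\sum_k \lambda_k Z_{ni}^{(k)}$ is \emph{not} the normalized increment of the scalar martingale $\lambda^\top\boldsymbol{M}_n$ in the sense of part (a), because each $Z_{ni}^{(k)}$ carries its own normalizer $\sqrt{\Var(M_n^{(k)})}$ rather than the common $\sqrt{\Var(\lambda^\top\boldsymbol{M}_n)}$. What you really have is a triangular martingale-difference array $W_{ni}:=\sum_k \lambda_k Z_{ni}^{(k)}$ with $\Var(\sum_i W_{ni})=\lambda^\top\E[\boldsymbol{Z}_n\boldsymbol{Z}_n^\top]\lambda\to\lambda^\top S\lambda$, to which you should apply the array form of the Hall--Heyde CLT directly (not part (a) as literally stated for a single-index martingale). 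With that adjustment the Cram\'er--Wold argument goes through. Your identification of the conditional-variance step in (a) as the delicate point is accurate; since the paper is content to cite the result, it does not address this either.
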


\subsubsection{Univariate asymptotic normality}
Recall that the estimation procedure also relies on an external estimator for $\psi(\xi)$. As the CLT below shows, the asymptotic distribution of $\sqrt{n}(\psi_n-\psi(\xi))$ and its joint distribution with the estimator for $\varphi(\alpha)$ play a crucial role in the analysis. 

\begin{theorem}\label{thm:phiZ_asymp_norm}
Suppose that $\E X(1)<0$ and $\psi_n-\psi(\xi) \approx \frac{1}{n}M_n+R_n$ such that:
\begin{enumerate}
\item[{\rm (i)}] $\sqrt{n}R_n{\parrow} 0$ and $M_n$ is a martingale with respect to $(V_0,\ldots,V_n)$ that satisfies the conditions of Lemma \ref{lemma:MCLT}a,
\item[{\rm (ii)}] $\lim_{n\to\infty}{\Var(M_n)}/{n}= \sigma_\xi^2<\infty$, 
\item[{\rm (iii)}] $\lim_{n\to\infty}\frac{1}{n}\sum_{i=1}^n\E\left[Z_i(\alpha) M_n\right]<\infty$,
\end{enumerate}
where $Z_i(\alpha)$ is given by \eqref{eq:phi_z_alpha}. Then $\sqrt{n}(\psi_n-\psi(\xi)){\darrow} \mathrm{N}(0,\sigma_\xi^2)$, and for every $\alpha>0$,
\begin{equation}\label{eq:phiZ_asymp_norm}
\sqrt{n}(\hat{\varphi}_n(\alpha;\psi_n)-\varphi(\alpha)) {\darrow}\mathrm{N}\left(0,\sigma_{\alpha,\xi}^2\right) \ ,
\end{equation}
where $0<\sigma_{\alpha,\xi}^2<\infty$.
\end{theorem}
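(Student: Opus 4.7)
My plan is to handle the two assertions in sequence. The asymptotic normality of $\sqrt n(\psi_n-\psi(\xi))$ follows almost directly from Lemma~\ref{lemma:MCLT}(a) applied to $M_n$: condition~(ii) identifies the scaling $\Var(M_n)\sim n\sigma_\xi^2$, so the lemma yields $M_n/\sqrt{\Var(M_n)}\darrow \mathrm{N}(0,1)$; combining this with $\sqrt n R_n\parrow 0$ from condition~(i) and Slutsky's lemma gives $\sqrt n(\psi_n-\psi(\xi))\darrow \mathrm{N}(0,\sigma_\xi^2)$.

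For the main claim I would work not with the explicit closed form \eqref{eq:phi_z_estimator} but with the estimating equation $G_n(\hat\varphi_n,\psi_n)=0$, where
\begin{equation*}
G_n(\varphi,\psi):=\frac{1}{n}\sum_{i=1}^n\left[e^{-\alpha V_i}-\frac{\xi}{\xi-\varphi}\left(e^{-\alpha V_{i-1}}-\frac{\alpha}{\psi}e^{-\psi V_{i-1}}\right)\right].
\end{equation*}
The key observation is that, by the conditional LST identity \eqref{eq:V_LST_transient}, the summands $\tilde Z_i$ of $G_n(\varphi(\alpha),\psi(\xi))$ are martingale differences with respect to $\mathcal F_i:=\sigma(V_0,\dots,V_i)$. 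A two-variable Taylor expansion of $G_n$ about $(\varphi(\alpha),\psi(\xi))$, combined with the consistency of $\hat\varphi_n$ (Theorem~\ref{thm:phi_consistency}) and of $\psi_n$, together with ergodic limits of the partial derivatives (both converging in probability to finite nonzero constants; in particular $\partial_\varphi G_n\parrow -\mu_C/(\xi-\varphi(\alpha))$ where $\mu_C=\E e^{-\alpha V}=\alpha\varphi'(0)/\varphi(\alpha)$ by PASTA and \eqref{eq:GPK}), yields the linearization
\begin{equation*}
\sqrt n\bigl(\hat\varphi_n-\varphi(\alpha)\bigr)=-\frac{\sqrt n\,G_n(\varphi(\alpha),\psi(\xi))\,+\,(\partial_\psi G_n)_\star\cdot\sqrt n(\psi_n-\psi(\xi))}{(\partial_\varphi G_n)_\star}+o_P(1),
\end{equation*}
where the subscript $\star$ denotes the probability limit.

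The asymptotic normality then reduces to a bivariate martingale CLT for $(\sqrt n\,G_n(\varphi(\alpha),\psi(\xi)),\,\sqrt n(\psi_n-\psi(\xi)))$. For the first component, the Lindeberg-type conditions \eqref{eq:M_mclt1_finite}--\eqref{eq:M_mclt2_maxtozero} follow from the uniform bound $|\tilde Z_i|\le 2\xi/(\xi-\varphi(\alpha))$ and stationarity, while $\Var\bigl(\sum_{i\le n}\tilde Z_i\bigr)/n$ converges to a finite limit by ergodicity of $(V_i)$; the second component is dispatched by condition~(i). The off-diagonal entry of the limiting covariance matrix $S$ is finite by orthogonality of martingale increments: $\E\bigl[\sum_{i\le n}\tilde Z_i\cdot M_n\bigr]=\sum_{i\le n}\E[\tilde Z_i m_i]$ with $m_i$ the increments of $M_n$, which is precisely the quantity controlled in condition~(iii) after replacing $\psi_n$ by $\psi(\xi)$ inside $Z_i$ (a swap that is justified by consistency at the $o_P(1)$ level). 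Lemma~\ref{lemma:MCLT}(b) then yields a centered bivariate normal limit with covariance $S$, and the continuous mapping theorem applied to the linearization above gives $\sqrt n(\hat\varphi_n-\varphi(\alpha))\darrow \mathrm{N}(0,\sigma_{\alpha,\xi}^2)$, with $\sigma_{\alpha,\xi}^2$ equal to a quadratic form in $S$ determined by the limits of the partial derivatives.

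The main obstacle I foresee is verifying strict positive-definiteness of $S$, as required by Lemma~\ref{lemma:MCLT}(b) and needed to guarantee $\sigma_{\alpha,\xi}^2>0$: this amounts to ruling out perfect linear dependence between $\tilde Z_i$ and $m_i$, which I expect to follow from the genuinely distinct roles of $V_i$ (entering $\tilde Z_i$ but not $m_i$) and the $\psi$-estimation structure, but which has to be argued rather than simply asserted. A secondary technical point is the careful bookkeeping of the $o_P(1/\sqrt n)$ remainders in the linearization: the boundary term $A_n$ in \eqref{eq:phi_z_estimator}, the plug-in perturbation arising from $e^{-\psi_n V_{i-1}}-e^{-\psi(\xi)V_{i-1}}$ (to be handled via the ergodic limit $\tfrac{1}{n}\sum V_{i-1}e^{-\psi(\xi)V_{i-1}}\parrow \E[Ve^{-\psi(\xi)V}]$ combined with $\sqrt n(\psi_n-\psi(\xi))=O_P(1)$), and the higher-order terms of the Taylor expansion all need to be shown to vanish faster than $1/\sqrt n$.
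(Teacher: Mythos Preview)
Your proposal is correct and follows essentially the same route as the paper: write the Z-estimator via the estimating function $G_n$ (the paper's $J_n$), Taylor-expand around $(\varphi(\alpha),\psi(\xi))$, use ergodic/PASTA limits for the partial derivatives, and conclude via the bivariate martingale CLT of Lemma~\ref{lemma:MCLT}(b), with condition~(iii) supplying the cross-covariance. The paper packages the partial-derivative limits, the marginal CLT for $\sqrt n\,J_n$, and the remainder control into three separate lemmas (Lemmas~\ref{lemma:Jn_limits}, \ref{lemma:Jn_normal}, \ref{lemma:Jn_remainder}), and it too does not give a detailed argument for strict positivity of $\sigma_{\alpha,\xi}^2$.
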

\begin{remark}
The normality assumptions involving the asymptotic estimation error of $\psi(\xi)$ have to be verified for the specific $\psi_n$ used. Condition (i) entails that the estimation error satisfies a martingale CLT, condition (ii) specifies the convergence rate of the variance and can in principle be generalized to other rates, and condition (iii) further demands that the asymptotic covariance between estimation errors is bounded with the appropriate scaling. As it turns out, under some regularity conditions these assumptions are valid for various estimation procedures (see, e.g., \cite{book_H1997}) . In the next section we present an MLE for $\psi(\xi)$ for the subordinator case, and show that it satisfies the conditions of Theorem \ref{thm:phiZ_asymp_norm}. 
\end{remark}

\begin{proof}[Proof of Theorem \ref{thm:phiZ_asymp_norm}] In this proof a number of lemmas are needed. These are stated after the end of the proof, and are proven in Appendix \ref{sec:appA}.

{
We focus on the marginal asymptotic distribution of the estimation error for a fixed $\alpha>0$. We denote the estimator by $\varphi_n:=\hat{\varphi}_n(\alpha;\psi_n)$ and recall that the true value at $\alpha$ is $\varphi(\alpha)$.

The starting point is the estimation equation \eqref{eq:phi_z_equation}, which can be written as
\begin{equation}\label{eq:Jn}
J_n(\psi,\varphi):=\frac{1}{n}\sum_{i=1}^n \left[e^{-\alpha V_i}-\frac{\xi}{\xi-\varphi}\left(e^{-\alpha V_{i-1}}-\frac{\alpha}{\psi}e^{-\psi V_{i-1}}\right)\right]=0\ .
\end{equation}
The standard {\it delta method} can be applied for deriving the asymptotic distribution of $\varphi_n$; see, e.g., \cite[p.\ 51]{book_vdV1998}. Concretely, taking a Taylor expansion of $J_n(\psi_n,\varphi_n)$, where $\psi_n$ and $\varphi_n$ are the estimators, around the true parameters $\psi(\xi)$ and $\varphi(\alpha)$, yields
\begin{equation}\label{eq:Jn_Taylor}
J_n(\psi_n,\varphi_n)\approx J_n({\psi(\xi)},\varphi(\alpha))+(\psi_n-{\psi(\xi)})\frac{\partial}{\partial \psi}J_n({\psi(\xi)},\varphi(\alpha))+(\varphi_n-\varphi(\alpha))\frac{\partial}{\partial \varphi}J_n({\psi(\xi)},\varphi(\alpha))\ ,
\end{equation} 
where the remainder will be argued to vanish, in a convergence-in-probability sense, in Lemma \ref{lemma:Jn_remainder}. As $J_n(\psi_n,\varphi_n)=0$, we thus obtain
\begin{equation}\label{eq:Jn_approx}
\sqrt{n}(\varphi_n-\varphi(\alpha))\approx \frac{\sqrt{n}J_n({\psi(\xi)},\varphi(\alpha))+\sqrt{n}(\psi_n-{\psi(\xi)})\frac{\partial}{\partial \psi}J_n({\psi(\xi)},\varphi(\alpha))}{-\frac{\partial}{\partial \varphi}J_n({\psi(\xi)},\varphi(\alpha))}\ .
\end{equation}
Lemma \ref{lemma:Jn_limits} states that partial derivatives featuring in the 
right-hand side of \eqref{eq:Jn_approx}
converge almost surely to constants, which we call $\partial J_{\psi}$ and $\partial J_{\varphi}$. Thus, by applying Slutsky's lemma to \eqref{eq:Jn_approx}, $\sqrt{n}(\varphi_n-\varphi(\alpha))$ converges in distribution to the same limit as
\begin{equation}\label{eq:Jn_approx2}
-\frac{1}{\partial J_{\varphi}} \left(\sqrt{n}\,J_n({\psi(\xi)},\varphi(\alpha))+\sqrt{n}(\psi_n-{\psi(\xi)})\,\partial J_{\psi}\right)\ .
\end{equation}
By condition (i), \[\sqrt{n}(\psi_n-{\psi(\xi)})\approx \frac{1}{\sqrt{n}}M_n=\sqrt{\frac{\Var(M_n)}{n}}\sum_{i=1}^n Z_{ni},\] where $\sum_{i=1}^n Z_{ni}$ is a normalized sum of martingale differences that, due to Lemma \ref{lemma:MCLT}a, converges to a standard normal random variable. We conclude, by condition (ii), that the limiting distribution of $\sqrt{n}(\psi_n-{\psi(\xi)})$ is normal. In Lemma \ref{lemma:Jn_normal}a we further show that $\sqrt{n}J_n({\psi(\xi)},\varphi(\alpha)){\darrow}\mathrm{N}\left(0,\sigma_{\alpha}^2 \right)$, where $\sigma_{\alpha}^2$ is given by \eqref{eq:phi_sigma_alpha}.
In order to complete the proof of Theorem \ref{thm:phiZ_asymp_norm} it is now sufficient to prove that the limiting joint distribution of $\sqrt{n}J_n({\psi(\xi)},\varphi(\alpha))$ and $\sqrt{n}(\psi_n-{\psi(\xi)})$ is (bivariate) normal, or equivalently that any linear combination of them is asymptotically normal. This is done in Lemma \ref{lemma:Jn_normal}b by applying Lemma \ref{lemma:MCLT}b, which is facilitated by the assumption of condition (iii). Finally, we conclude that the asymptotic variance of $\sqrt{n}(\varphi_n-\varphi(\alpha))$ is
\begin{equation}\label{eq:phi_Z_asymp_var}
\sigma_{\alpha,\xi}^2:=\frac{\sigma_\alpha^2+2\partial J_{\psi}\sigma_{\alpha,\psi}^2+(\partial J_{\psi})^2\sigma_\xi^2}{\left(\partial J_{\varphi} \right)^2}\ ,
\end{equation} 
where
\begin{equation}\label{eq:cov_phi_psi}
\sigma_{\alpha,\psi}^2:=\lim_{n\to\infty}\Cov\left[\sqrt{n}J_n({\psi(\xi)},\varphi(\alpha)),\sqrt{n}(\psi_n-{\psi(\xi)})\right]\ .
\end{equation}This proves the claimed asymptotic normality.
}
\end{proof}

\begin{remark}
The explicit form of the term $\sigma_{\alpha,\psi}^2$ depends on the specific estimator $\psi_n$. In the sequel we derive an explicit expression for the asymptotic variance in the subordinator case when $\psi_n$ is an MLE.
\end{remark}

\begin{lemma}\label{lemma:Jn_limits}
As $n\to \infty$,
\begin{equation}\label{eq:Jn_dpsi}
\frac{\partial}{\partial \psi}J_n({\psi(\xi)},\varphi(\alpha)){\asarrow} -\frac{\alpha\varphi'(0)}{(\xi-\varphi(\alpha))\psi'(\xi)}=:\partial J_\psi \ ,
\end{equation}
and
\begin{equation}\label{eq:Jn_dphi}
\frac{\partial}{\partial \varphi}J_n({\psi(\xi)},\varphi(\alpha)){\asarrow} -\frac{\alpha\varphi\apost (0)}{\varphi(\alpha)(\xi-\varphi(\alpha))}=:
\partial J_\varphi\ .
\end{equation}
\end{lemma}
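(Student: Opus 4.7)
The plan is to compute the two partial derivatives of $J_n$ explicitly, apply PASTA and the ergodic theorem (exactly as in the proof of Theorem~\ref{thm:phi_consistency}) to replace the resulting empirical averages by stationary expectations, and then evaluate those expectations via the GPK formula \eqref{eq:GPK} together with the inverse-function identity $\psi'(\xi)\varphi'(\psi(\xi)) = 1$.

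First I would differentiate $J_n(\psi,\varphi)$ termwise. Only the factor $\xi/(\xi-\varphi)$ depends on $\varphi$, so
\[
\frac{\partial}{\partial \varphi}J_n(\psi,\varphi) = -\frac{\xi}{(\xi-\varphi)^2}\cdot\frac{1}{n}\sum_{i=1}^n\left(e^{-\alpha V_{i-1}} - \frac{\alpha}{\psi}e^{-\psi V_{i-1}}\right).
\]
For the $\psi$-derivative, differentiating $-\alpha\psi^{-1}e^{-\psi V_{i-1}}$ and collecting terms yields
\[
\frac{\partial}{\partial \psi}J_n(\psi,\varphi) = -\frac{\xi\alpha}{(\xi-\varphi)\psi^2}\cdot\frac{1}{n}\sum_{i=1}^n(1+\psi V_{i-1})e^{-\psi V_{i-1}}.
\]

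Next, evaluating at $(\psi(\xi),\varphi(\alpha))$, each expression is a deterministic constant times an empirical average of a bounded continuous function of $V_{i-1}$. Under $\E X(1)<0$ the workload is stationary and ergodic, and PASTA implies that the samples $V_i$ share the stationary law $V$ in the limit; the strong law for stationary ergodic sequences then delivers almost-sure convergence of each average to the corresponding stationary expectation. I will need $\E e^{-\alpha V}$, $\E e^{-\psi(\xi) V}$, and $\E[(1+\psi(\xi)V)e^{-\psi(\xi)V}]$. The first two come directly from \eqref{eq:GPK} via $\beta\varphi'(0)/\varphi(\beta)$ at $\beta\in\{\alpha,\psi(\xi)\}$, using $\varphi(\psi(\xi))=\xi$. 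The third follows by differentiating \eqref{eq:GPK} to obtain $\E[V e^{-\beta V}] = -\tfrac{d}{d\beta}[\beta\varphi'(0)/\varphi(\beta)]$ and then evaluating at $\beta=\psi(\xi)$.

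Substituting back, the $\varphi$-limit falls out after the algebraic identity $1/\varphi(\alpha) - 1/\xi = (\xi-\varphi(\alpha))/(\xi\varphi(\alpha))$, which cancels one factor of $\xi-\varphi(\alpha)$ in the prefactor and produces the stated $\partial J_\varphi$. For the $\psi$-limit, the leading constants combine with the computed limit of the average, and the inverse-function identity $\psi'(\xi)=1/\varphi'(\psi(\xi))$ is then applied to rewrite the residual factor $\varphi'(\psi(\xi))$ in terms of $\psi'(\xi)$.

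I expect no real obstacle: the map $v\mapsto (1+\psi(\xi)v)e^{-\psi(\xi)v}$ is bounded on $[0,\infty)$ (its derivative $-\psi(\xi)^2 v\,e^{-\psi(\xi)v}$ is nonpositive, so the function is nonincreasing with maximum $1$ at $v=0$), and hence the ergodic-averaging step requires no integrability work beyond what already drives the proof of Theorem~\ref{thm:phi_consistency}; everything else is routine calculus and algebra.
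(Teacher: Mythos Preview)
Your proposal is correct and follows essentially the same approach as the paper: compute the partial derivatives explicitly, apply PASTA and the ergodic theorem to replace empirical averages by stationary expectations, evaluate these via the GPK formula \eqref{eq:GPK} (including the differentiated version for $\E[Ve^{-\beta V}]$), and finish with the inverse-function identity $\psi'(\xi)\varphi'(\psi(\xi))=1$. Your extra remark on boundedness of $v\mapsto(1+\psi(\xi)v)e^{-\psi(\xi)v}$ is a nice justification the paper leaves implicit.
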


\begin{lemma}\label{lemma:Jn_normal}
\begin{itemize}
\item[{\rm (a)}] For any $\alpha>0$,
\begin{equation}\label{eq:Jn0_normal}
\sqrt{n}J_n({\psi(\xi)},\varphi(\alpha)){\darrow}\mathrm{N}\left(0,\sigma_{\alpha}^2 \right)\ ,
\end{equation}
where
\begin{equation}\label{eq:phi_sigma_alpha}
\sigma_{\alpha}^2: = \frac{2\xi^2\alpha\varphi\apost (0)}{(\xi-\varphi(\alpha))^2}\Bigg(\frac{\left(\frac{\varphi(\alpha)}{\xi}\right)^2-\frac{2\varphi(\alpha)}{\xi}}{\varphi(2\alpha)} -\frac{\alpha}{{\psi(\xi)}\varphi(2{\psi(\xi)})}+\frac{\alpha+{\psi(\xi)}}{{\psi(\xi)}\varphi(\alpha+{\psi(\xi)})} \Bigg) \ .
\end{equation}
\item[{\rm (b)}] Furthermore, under the conditions of Theorem \ref{thm:phiZ_asymp_norm},
\begin{equation}\label{eq:Jpsi_asymp_norm}
\sqrt{n}\begin{pmatrix}
J_n({\psi(\xi)},\varphi(\alpha)) \\
\psi_n-{\psi(\xi)}
\end{pmatrix}{\darrow}\mathrm{N}\left({\boldsymbol 0},\begin{pmatrix}
\sigma_\alpha^2 & \sigma_{\alpha,\psi}^2 \\
\sigma_{\alpha,\psi}^2 & \sigma_{\xi}^2
\end{pmatrix}\right) \ .
\end{equation}
Consequently,
\begin{equation}\label{eq:Jpsi_sum_norm}
\sqrt{n}\left(J_n({\psi(\xi)},\varphi(\alpha))+\partial J_{\psi}(\psi_n-{\psi(\xi)})\right){\darrow}\mathrm{N}\left(0,\sigma_\alpha^2+2\partial J_{\psi}\,\sigma_{\alpha,\psi}^2+\left(\partial J_{\psi}\right)^2\sigma_{\xi}^2\right) \ .
\end{equation}
\end{itemize}
\end{lemma}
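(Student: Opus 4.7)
The starting observation is that evaluated at the true parameters, relation \eqref{eq:V_LST_transient} is exactly the statement $\E[Z_i(\alpha)\mid \mathcal{F}_{i-1}] = 0$, where $\mathcal{F}_i := \sigma(V_0,\ldots,V_i)$. Hence $n J_n({\psi(\xi)},\varphi(\alpha)) = \sum_{i=1}^n Z_i(\alpha)$ is a martingale with uniformly bounded increments: since $V_i\ge 0$, we have $|Z_i(\alpha)| \le 1 + \frac{\xi}{|\xi - \varphi(\alpha)|}(1 + \alpha/\psi(\xi)) =: C_\alpha$. The plan is to apply Lemma \ref{lemma:MCLT}(a) to this martingale. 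With $Z_{ni} := Z_i(\alpha)/\sqrt{\Var(M_n)}$ where $M_n:=\sum_{i=1}^n Z_i(\alpha)$, the uniform bound gives $\E\max_i Z_{ni}^2 \le C_\alpha^2/\Var(M_n)$ and $\max_i |Z_{ni}| \le C_\alpha/\sqrt{\Var(M_n)}$, so both hypotheses of the lemma follow once I establish $\Var(M_n)/n \to \sigma_\alpha^2\in(0,\infty)$.

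To compute $\sigma_\alpha^2$ explicitly, I would use martingale orthogonality and the tower property to write $\frac{1}{n}\Var(M_n) = \frac{1}{n}\sum_{i=1}^n \E[Z_i(\alpha)^2]$, and then
\[ \E[Z_i(\alpha)^2] = \E\bigl[\E[e^{-2\alpha V_i}\mid V_{i-1}]\bigr] - \E\bigl[(\E[e^{-\alpha V_i}\mid V_{i-1}])^2\bigr]. \]
Using \eqref{eq:V_LST_transient} at $\alpha$ and at $2\alpha$, both terms become linear combinations of stationary LSTs $\E e^{-\beta V}$ at $\beta\in\{2\alpha,\alpha+\psi(\xi),2\psi(\xi)\}$ (the cross term from squaring produces $e^{-(\alpha+\psi(\xi))V_{i-1}}$ and $e^{-2\psi(\xi)V_{i-1}}$). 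By PASTA and $\E X(1)<0$, $V_{i-1}$ is asymptotically stationary, and applying the GPK identity $\E e^{-\beta V} = \beta\varphi'(0)/\varphi(\beta)$ from \eqref{eq:GPK} yields \eqref{eq:phi_sigma_alpha} after routine simplification. Positivity of $\sigma_\alpha^2$ follows from $Z_i(\alpha)$ not being degenerate (the workload at the next Poisson sampling time has a non-degenerate conditional distribution).

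\textbf{Plan for part (b).}
Condition (i) of the theorem supplies a martingale $M_n=\sum_{i=1}^n \Delta M_i$ with respect to the same filtration $\{\mathcal{F}_i\}$ such that $\sqrt{n}(\psi_n-\psi(\xi))\approx M_n/\sqrt n$. The key point is that the pair $(Z_i(\alpha),\Delta M_i)^\top$ is then a \emph{vector} martingale-difference sequence w.r.t.\ $\{\mathcal{F}_i\}$. I would apply Lemma \ref{lemma:MCLT}(b) to the normalized partial sums: the first coordinate variance comes from part (a), the second from condition (ii), and the Lindeberg-type bounds follow coordinate-wise from part (a) and condition (i). For the off-diagonal entry, martingale orthogonality gives $\E[Z_i(\alpha)\Delta M_j]=0$ for $j<i$ (since $\Delta M_j$ is $\mathcal{F}_{i-1}$-measurable and $\E[Z_i(\alpha)\mid\mathcal{F}_{i-1}]=0$) and for $j>i$ (by the same argument with the roles swapped). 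Consequently $\E[Z_i(\alpha) M_n]=\E[Z_i(\alpha)\Delta M_i]$, so condition (iii) is exactly the statement that $\frac{1}{n}\sum_i \E[Z_i(\alpha)\Delta M_i]$ converges to a finite limit, which is the off-diagonal asymptotic covariance $\sigma_{\alpha,\psi}^2$. This gives \eqref{eq:Jpsi_asymp_norm}. The consequence \eqref{eq:Jpsi_sum_norm} then follows by applying the continuous mapping theorem to the linear combination with coefficients $(1,\partial J_\psi)$, together with the negligibility $\sqrt n R_n\parrow 0$.

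\textbf{Main obstacle.} The principal technical challenge is the explicit algebraic reduction to \eqref{eq:phi_sigma_alpha}: squaring the conditional LST from \eqref{eq:V_LST_transient} produces several cross terms that have to be combined with the $\E[e^{-2\alpha V_i}\mid V_{i-1}]$ term before the GPK substitution, and one must also verify that the resulting denominators $\varphi(2\alpha)$, $\varphi(\alpha+\psi(\xi))$, $\varphi(2\psi(\xi))$ are nonzero on the relevant domain. A minor subtlety is showing that the limit covariance matrix in Lemma \ref{lemma:MCLT}(b) is positive-definite rather than merely positive-semi-definite; this can be handled either from the explicit form of $\sigma_\alpha^2$ or, if strict positivity fails for specific parameter combinations, by passing to the limit along a diagonal argument via the Cramér--Wold device.
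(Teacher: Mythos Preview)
Your proposal is correct and follows essentially the same route as the paper: recognize that $nJ_n(\psi(\xi),\varphi(\alpha))$ is a martingale with increments $Z_i(\alpha)$ satisfying $\E[Z_i(\alpha)\mid\mathcal{F}_{i-1}]=0$ by \eqref{eq:V_LST_transient}, compute the limiting variance via the stationary workload LST and the GPK formula \eqref{eq:GPK}, verify the hypotheses of Lemma~\ref{lemma:MCLT}(a), and then handle part (b) through Lemma~\ref{lemma:MCLT}(b) using conditions (i)--(iii). Your use of the conditional-variance identity $\E[Z_i^2]=\E\bigl[\E[e^{-2\alpha V_i}\mid V_{i-1}]\bigr]-\E\bigl[(\E[e^{-\alpha V_i}\mid V_{i-1}])^2\bigr]$ is a cleaner bookkeeping device than the paper's direct expansion of the square (which tracks the cross terms $\E[e^{-\alpha V_i-\alpha V_{i-1}}]$ and $\E[e^{-\alpha V_i-\psi(\xi)V_{i-1}}]$ separately), and your uniform bound $|Z_i(\alpha)|\le C_\alpha$ makes the verification of \eqref{eq:M_mclt1_finite}--\eqref{eq:M_mclt2_maxtozero} more transparent than the paper's appeal to $\P(Z_i<\infty)=1$; but both lead to the same computation and the same limit \eqref{eq:phi_sigma_alpha}.
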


\begin{lemma}\label{lemma:Jn_remainder}
The remainder term of the Taylor expansion in \eqref{eq:Jn_Taylor} converges in probability to zero.
\end{lemma}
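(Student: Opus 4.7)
The plan is to apply Taylor's theorem with explicit second-order Lagrange remainder,
\begin{equation*}
R_n = \tfrac12(\psi_n-\psi(\xi))^2\,\partial_{\psi\psi}J_n(\tilde\psi_n,\tilde\varphi_n) + (\psi_n-\psi(\xi))(\varphi_n-\varphi(\alpha))\,\partial_{\psi\varphi}J_n(\tilde\psi_n,\tilde\varphi_n) + \tfrac12(\varphi_n-\varphi(\alpha))^2\,\partial_{\varphi\varphi}J_n(\tilde\psi_n,\tilde\varphi_n),
\end{equation*}
where $(\tilde\psi_n,\tilde\varphi_n)$ lies on the segment joining $(\psi(\xi),\varphi(\alpha))$ and $(\psi_n,\varphi_n)$. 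Since \eqref{eq:Jn_Taylor} is used in \eqref{eq:Jn_approx} only after multiplication by $\sqrt n$, the precise claim that needs to be established is $\sqrt n\,R_n\parrow 0$.

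First I would differentiate \eqref{eq:Jn} twice to express each Hessian entry as a rational function of $1/\psi$ and $1/(\xi-\varphi)$ multiplied by an empirical average of the form $n^{-1}\sum_{i=1}^n V_{i-1}^k e^{-\psi V_{i-1}}$, $k\in\{0,1,2\}$. On a compact neighborhood $K$ of $(\psi(\xi),\varphi(\alpha))$ that avoids $\psi=0$ and $\varphi=\xi$ (admissible by Remark \ref{rem:alpha=psi_xi}), the rational prefactors are uniformly bounded and the summands admit the envelope $V_{i-1}^k e^{-\psi V_{i-1}}\le C_K V_{i-1}^k e^{-\psi_0 V_{i-1}}$ with $\psi_0:=\inf_K\psi>0$; the stationary expectation of this envelope is finite by analyticity of the GPK transform \eqref{eq:GPK} at $\psi_0$. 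PASTA combined with a standard uniform LLN then yields that $\sup_{(\psi,\varphi)\in K}|\partial^2 J_n(\psi,\varphi)|=O_\P(1)$. Because $\psi_n\parrow\psi(\xi)$ (hypothesis) and $\varphi_n\parrow\varphi(\alpha)$ (Theorem \ref{thm:phi_consistency}), the intermediate point $(\tilde\psi_n,\tilde\varphi_n)$ lies in $K$ with probability tending to one, so the Hessian evaluated at the intermediate point is $O_\P(1)$.

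The main obstacle is a mild circularity: the factor $(\varphi_n-\varphi(\alpha))^2$ appears inside $R_n$, yet consistency alone yields only $(\varphi_n-\varphi(\alpha))^2=o_\P(1)$, which after multiplication by $\sqrt n$ need not vanish. To break this loop I would apply a bootstrap-style argument to the rearranged Taylor identity: using $2ab\le a^2+b^2$ one obtains $\sqrt n|R_n|\le C\bigl(\sqrt n(\psi_n-\psi(\xi))^2+\sqrt n(\varphi_n-\varphi(\alpha))^2\bigr)$, and the $\sqrt n(\varphi_n-\varphi(\alpha))^2$ piece can be absorbed into the $\sqrt n(\varphi_n-\varphi(\alpha))\,\partial_\varphi J_n$ term because $-\partial_\varphi J_n\asarrow -\partial J_\varphi\neq 0$ (Lemma \ref{lemma:Jn_limits}) is eventually bounded away from zero. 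Since the remaining right-hand side is $O_\P(1)$ by Lemma \ref{lemma:Jn_normal}(a) and the hypothesis $\sqrt n(\psi_n-\psi(\xi))=O_\P(1)$, this first delivers the crude rate $\sqrt n(\varphi_n-\varphi(\alpha))=O_\P(1)$. Substituting back, $\sqrt n\,R_n=O_\P(1)\cdot o_\P(1)=o_\P(1)$, which is the desired conclusion.
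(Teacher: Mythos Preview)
Your argument is correct and, in one respect, more careful than the paper's own proof. The paper's proof treats the three second-order terms at the true parameter value $(\psi(\xi),\varphi(\alpha))$ rather than at a Lagrange intermediate point, and for the representative term writes
\[
\sqrt{n}(\varphi_n-\varphi(\alpha))^2\,\partial_\varphi^2 J_n(\psi(\xi),\varphi(\alpha))
= (\varphi_n-\varphi(\alpha))\cdot\sqrt{n}(\varphi_n-\varphi(\alpha))\cdot\partial_\varphi^2 J_n(\psi(\xi),\varphi(\alpha)),
\]
arguing that the first factor is $o_\P(1)$ by consistency, the second derivative is almost surely finite, and $\sqrt{n}(\varphi_n-\varphi(\alpha))$ is bounded ``as the first two moments of the error have finite limits due to Lemma~\ref{lemma:Jn_normal}.'' That last step is exactly the circularity you flagged: Lemma~\ref{lemma:Jn_normal} controls $\sqrt{n}J_n$ and $\sqrt{n}(\psi_n-\psi(\xi))$, not $\sqrt{n}(\varphi_n-\varphi(\alpha))$ directly, so one implicitly needs the very remainder bound being proved. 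Your bootstrap---absorbing the $\sqrt{n}(\varphi_n-\varphi(\alpha))^2$ contribution into the $\partial_\varphi J_n$ term using $|\partial_\varphi J_n|\asarrow|\partial J_\varphi|>0$ and $|\varphi_n-\varphi(\alpha)|=o_\P(1)$---closes this loop cleanly and yields the crude rate $\sqrt{n}(\varphi_n-\varphi(\alpha))=O_\P(1)$ before feeding it back. Your use of the Lagrange form together with a uniform Hessian bound over a compact neighborhood (via the envelope $V_{i-1}^k e^{-\psi_0 V_{i-1}}$ and the GPK transform) is slightly more work than the paper's pointwise evaluation, but it is what the intermediate-point formulation requires and it goes through as you describe.
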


\subsubsection{Multivariate asymptotic normality}\label{sec:phiZ_multi_normality}
We now consider the situation that $\varphi(\alpha)$ is estimated for multiple values of $\alpha$ simultaneously. The main result is that the estimation error of $\hat{\boldsymbol \Phi}_n({\boldsymbol\alpha})$ converges to a multivariate normal random variable at rate $\sqrt{n}$, under the conditions of Theorem \ref{thm:phiZ_asymp_norm}. The proof combines the lines of reasoning developed for the univariate asymptotic normality with the use of the Cram\'er-Wold device. 

We first introduce some notation. 
Denote for any $x>0$, 
\begin{equation}
\partial J_{\psi,x}:=\lim_{n\to\infty}\frac{\partial}{\partial \psi}J_n({\psi(\xi)},\varphi(x)),\:\:\:\:\partial J_{\varphi,x}:=\lim_{n\to\infty}\frac{\partial}{\partial \varphi}J_n({\psi(\xi)},\varphi(x))\ ,
\end{equation} 
as defined in \eqref{eq:Jn_dpsi} and \eqref{eq:Jn_dphi}. In addition, let
\begin{align}
\nonumber
\sigma_{J,\alpha,\beta}^2:=&\:\frac{(\alpha+\beta)\varphi\apost (0)}{\varphi(\alpha+\beta)}-\frac{\xi^2\varphi\apost (0)}{(\xi-\varphi(\alpha))(\xi-\varphi(\beta))}\times\\
&\:\:\:\:\left(\frac{\alpha+\beta}{\varphi(\alpha+\beta)}+\frac{2\alpha\beta}{{\psi(\xi)}\varphi(2{\psi(\xi)})}-\frac{\beta(\alpha+{\psi(\xi)})}{{\psi(\xi)}\varphi(\alpha+{\psi(\xi)})}-\frac{\alpha(\beta+{\psi(\xi)})}{{\psi(\xi)}\varphi(\beta+{\psi(\xi)})}\right),
\end{align}
and, for every $\alpha\neq\beta$,
\begin{equation}
\sigma_{\alpha,\beta}^2:=\frac{\sigma_{J,\alpha,\beta}^2+\partial J_{\psi,\beta}\sigma_{\alpha,\xi}^2+\partial J_{\psi,\alpha}\sigma_{\beta,\xi}^2+\partial J_{\psi,\alpha}\partial J_{\psi,\beta}\sigma_{\xi}^2}{\partial J_{\varphi,\alpha}\partial J_{\varphi,\beta}} \ .
\end{equation}

{
\begin{theorem}\label{thm:phiZ_asymp_norm_multi}
Suppose that $\E X(1)<0$. Let $\Phi({\boldsymbol\alpha}):=\left(\varphi(\alpha_1),\ldots,\varphi(\alpha_p)\right)$ and $\hat{\boldsymbol \Phi}_n({\boldsymbol\alpha}):=\left(\hat{\varphi}_n(\alpha_1;\psi_n),\ldots,\hat{\varphi}_n(\alpha_p;\psi_n)\right)$, where $(\alpha_1,\ldots,\alpha_p)\in(0,\infty)^p$. If $\psi_n$ satisfies the conditions of Theorem \ref{thm:phiZ_asymp_norm}, and in particular (iii) is satisfied for every $\alpha_i$, $i=1,\ldots,p$, then
\begin{equation}\label{eq:phiZ_asymp_norm_multi}
\sqrt{n}\left(\hat{\boldsymbol \Phi}_n({\boldsymbol\alpha})-\Phi({\boldsymbol\alpha})\right) {\darrow}\mathrm{N}\left(0,\Sigma\right) \ ,
\end{equation}
where $\Sigma_{ij}=\sigma_{\alpha_i,\xi}^2$ if $i=j$ and 
$
\sigma_{\alpha_i,\alpha_j}^2$ otherwise.
\end{theorem}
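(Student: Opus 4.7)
The plan is to lift the univariate argument of Theorem~\ref{thm:phiZ_asymp_norm} to the vector setting by combining the componentwise delta-method expansion with the Cram\'er--Wold device and the martingale CLT.

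First, I would apply the Taylor expansion \eqref{eq:Jn_Taylor} and Lemmas~\ref{lemma:Jn_limits}--\ref{lemma:Jn_remainder} separately to each coordinate, yielding for every $i \in \{1,\ldots,p\}$
\begin{equation*}
\sqrt{n}\bigl(\hat{\varphi}_n(\alpha_i;\psi_n)-\varphi(\alpha_i)\bigr) \approx -\frac{1}{\partial J_{\varphi,\alpha_i}}\Bigl(\sqrt{n}\, J_n({\psi(\xi)},\varphi(\alpha_i)) + \partial J_{\psi,\alpha_i}\,\sqrt{n}(\psi_n-{\psi(\xi)})\Bigr).
\end{equation*}
Stacking these $p$ identities, $\sqrt{n}(\hat{\boldsymbol\Phi}_n({\boldsymbol\alpha})-\Phi({\boldsymbol\alpha}))$ is asymptotically equivalent (in probability) to $A\,W_n$, where $A \in \mathbb{R}^{p\times(p+1)}$ is the deterministic matrix whose $i$-th row is $(0,\ldots,-1/\partial J_{\varphi,\alpha_i},\ldots,0,-\partial J_{\psi,\alpha_i}/\partial J_{\varphi,\alpha_i})$, and
\begin{equation*}
W_n^\top := \sqrt{n}\bigl(J_n({\psi(\xi)},\varphi(\alpha_1)),\ldots,J_n({\psi(\xi)},\varphi(\alpha_p)),\,\psi_n-{\psi(\xi)}\bigr).
\end{equation*}
By Slutsky it then suffices to prove joint asymptotic normality of $W_n$ in $\mathbb{R}^{p+1}$.

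For that I would invoke the Cram\'er--Wold device: fix any $c=(c_1,\ldots,c_{p+1})\in\mathbb{R}^{p+1}$ and show that $c^\top W_n \darrow \mathrm{N}(0,c^\top S c)$ for the target covariance matrix $S$. Under condition~(i), $c^\top W_n$ is asymptotically equivalent to the normalized sum $\tfrac{1}{\sqrt{n}}\sum_{i=1}^n Y_i$ with $Y_i := \sum_{k=1}^p c_k Z_i(\alpha_k) + c_{p+1}(M_i-M_{i-1})$. Each $Z_i(\alpha_k)$ is a martingale difference with respect to $\mathcal{F}_{i-1}:=\sigma(V_0,\ldots,V_{i-1})$ by the conditional LST identity \eqref{eq:V_LST_transient}, and so is $M_i-M_{i-1}$ by assumption, so $Y_i$ itself is a martingale difference. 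Lemma~\ref{lemma:MCLT}(a) therefore yields the one-dimensional CLT once \eqref{eq:M_mclt1_finite}--\eqref{eq:M_mclt2_maxtozero} are verified, which follows from the uniform bound $|Z_i(\alpha_k)|\le 1+\xi/|\xi-\varphi(\alpha_k)|$ combined with the corresponding tightness of $M_n$ provided by condition~(i).

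The main technical step is the identification of the entries of $S$. The diagonal entries $\sigma_{\alpha_i}^2$ and $\sigma_\xi^2$ are supplied by Lemma~\ref{lemma:Jn_normal}(a) and condition~(ii), and the $J$--$\psi$ cross terms $\sigma_{\alpha_i,\psi}^2$ are guaranteed finite by condition~(iii). The new ingredient is the $J$--$J$ cross-covariance $\lim_{n\to\infty}\Cov(\sqrt{n}J_n({\psi(\xi)},\varphi(\alpha)),\sqrt{n}J_n({\psi(\xi)},\varphi(\beta)))=\sigma_{J,\alpha,\beta}^2$. By orthogonality of martingale differences this reduces to $\lim_{n\to\infty}\tfrac{1}{n}\sum_{i=1}^n \E[Z_i(\alpha)Z_i(\beta)]$. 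I would expand the product, take conditional expectations using \eqref{eq:V_LST_transient} applied at the arguments $\alpha$, $\beta$, $\alpha+\beta$, $\alpha+{\psi(\xi)}$, $\beta+{\psi(\xi)}$ and $2{\psi(\xi)}$, then invoke PASTA with the GPK formula \eqref{eq:GPK} to evaluate the resulting stationary LSTs; the identity $\varphi({\psi(\xi)})=\xi$ then simplifies everything to the announced closed form. This bookkeeping --- which parallels but strictly generalizes the computation of $\sigma_\alpha^2$ in Lemma~\ref{lemma:Jn_normal}(a) --- is the chief obstacle. Once $S$ is in hand, the continuous mapping theorem delivers $\sqrt{n}(\hat{\boldsymbol\Phi}_n({\boldsymbol\alpha})-\Phi({\boldsymbol\alpha})) \darrow \mathrm{N}(0,ASA^\top)$, and a direct matrix calculation confirms that $(ASA^\top)_{ij}$ matches $\sigma_{\alpha_i,\alpha_j}^2$ off the diagonal and $\sigma_{\alpha_i,\xi}^2$ on it, completing the proof.
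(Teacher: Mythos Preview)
Your proposal is correct and follows essentially the same approach as the paper: componentwise Taylor expansion via \eqref{eq:Jn_approx2}, Cram\'er--Wold reduction to one-dimensional martingale-difference sums, Lemma~\ref{lemma:MCLT} for the CLT, and computation of the new $J$--$J$ cross-covariance $\sigma_{J,\alpha,\beta}^2$ by orthogonality of the $Z_i$'s plus GPK. The only organizational difference is that you package the $p$ components of $J_n$ together with $\psi_n-\psi(\xi)$ into a single $(p+1)$-vector $W_n$ and apply Cram\'er--Wold once, whereas the paper applies Cram\'er--Wold to the $p$-vector $\boldsymbol{J}_n$ and handles the coupling with $\psi_n-\psi(\xi)$ through the bivariate result already established in Lemma~\ref{lemma:Jn_normal}(b); your packaging is arguably tidier but the content is the same.
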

}

\section{Estimating $\psi(\xi)$ for subordinator input}\label{sec:MG1_MLE}

Theorems \ref{thm:phi_consistency} and \ref{thm:phiZ_asymp_norm} have established that, under specific conditions, the Z-estimator for $\varphi(\alpha)$ is consistent (asymptotically normal, respectively) if we have a consistent (asymptotically normal) estimator for $\psi(\xi)$. In this section we propose such an estimator for $\psi(\xi)$ using a likelihood approach for the subordinator input case, i.e., the input process has almost surely non-decreasing paths. An important special case is the CP model: nonnegative jumps (with LST $G^*(\alpha)$) arrive according to a Poisson process with rate $\lambda$. We thus have that for this example,
\begin{equation}\label{eq:MG1_exponent}
\varphi(\alpha)=\lambda\left(G^*(\alpha)-1\right)+\alpha\ .
\end{equation}
In the following an ML-type estimator is constructed for $\psi(\xi)$. Suppose we have a sample of workload observations ${\boldsymbol V}=(V_0,V_1,\ldots,V_n)$. Let $Y_i:=\mathbf{1}(V_i=0)$, then by \eqref{eq:P_idle_transient},
\begin{equation}
\P(Y_i=1\,|\,{\boldsymbol V})=\frac{\xi}{\psi(\xi)}e^{-\psi(\xi)V_{i-1}},\ i=1,\ldots,n\ .
\end{equation}
Conditional on $\mathbf{V}$, the sample ${\boldsymbol Y}=(Y_1,\ldots,Y_n)$ is then distributed as a sample of independent, but non-identically distributed, Bernoulli trials, with likelihood
\begin{equation}\label{eq:Y_likelihood}
\P_{\psi(\xi)}(Y_1,\ldots,Y_n|\mathbf{V})=\prod_{i=1}^n\left[\frac{\xi}{\psi(\xi)}e^{-\psi(\xi)V_{i-1}}\right]^{Y_i}\left[1-\frac{\xi}{\psi(\xi)}e^{-\psi(\xi)V_{i-1}}\right]^{1-Y_i}\ .
\end{equation}
We can thus define the MLE for $\psi(\xi)$: $\hat{\psi}_n=\argmax_{\psi\in\Psi}\{\P_{\psi}(Y_1,\ldots,Y_n|\mathbf{V})\}$.

We assume that the parameter space $\Psi$ is compact and that the true value lies within the interior. This assumption is reasonable and can be achieved by setting $\Psi=[\xi,C]$, where $C$ is an arbitrarily large constant. Note that $\psi(\xi)\geq \xi$ by \eqref{eq:P_idle_transient}. For special cases the parameter space can be specified in a more detailed manner. For example, the exponent function for the CP case is given in \eqref{eq:MG1_exponent}, and as $G^*(\alpha)\in[0,1]$, it follows that $\alpha-\lambda \leq \varphi(\alpha) \leq \alpha$.
This implies
\begin{equation}
\psi(\xi)-\lambda \leq \varphi(\psi(\xi)) \leq \psi(\xi)\ .
\end{equation}
As $\psi(\xi)=\varphi^{-1}(\xi)$, we conclude that the parameter space is $\Psi=[\xi,\xi+\lambda]$. In this definition there is an implicit assumption that $\lambda$ is known. If it is unknown, one could again follow the pragmatic approach of replacing $\xi+\lambda$ by an arbitrarily large constant.

For any $\psi\in\Psi$, the log-likelihood function, $L_n(\psi)=\log\P_\psi(Y_1,\ldots,Y_n\,|\,{\boldsymbol V})$, is 
\begin{equation}\label{eq:loglikelihood_psi}
L_n(\psi)=\sum_{i=1}^n\left[Y_i(\log\xi-\log\psi-\psi V_{i-1})+(1-Y_i)\log\left(1-\frac{\xi}{\psi}e^{-\psi V_{i-1}}\right)\right]\ ,
\end{equation}
and taking derivatives yields
\begin{equation}\label{eq:loglikelihood_psi_deriv}
L_n\apost (\psi) = \sum_{i=1}^n \frac{\left(\frac{1}{\psi}+V_{i-1}\right)\left(\frac{\xi}{\psi}e^{-\psi V_{i-1}}-Y_i\right)}{1-\frac{\xi}{\psi}e^{-\psi V_{i-1}}}\ .
\end{equation}

The MLE is the solution of the nonlinear optimization problem,
\begin{equation}
\hat{\psi}_n:=\argmax_{\psi\in\Psi}L_n (\psi)\ .
\end{equation}
The function $L_n(\psi)$ is not always concave with respect to $\psi$ but as the parameter space is compact a maximizer always exists. If $Y_i=1$ for all $i=1,\ldots,n$ (i.e., all observations are at idle periods), then by \eqref{eq:Y_likelihood} the likelihood $L_n(\psi)$ decreases with $\psi$, and therefore the MLE is the lower boundary of the parameter space. On the other hand if $Y_i=0$ for all $i=1,\ldots,n$ (i.e., no idle periods are observed), then by \eqref{eq:Y_likelihood} the likelihood $L_n(\psi)$ increases with $\psi$, and then the MLE is the upper boundary of the support. Otherwise, the solution may be in the interior or boundary of the parameter space. In the following subsection we will show that as $n$ increases the asymptotic log-likelihood has a unique maximizer, that solves the first order equation $L_n\apost (\hat{\psi}_n)=0$.

\subsection{Consistency}\label{sec:MLE_consistency}

In this subsection we establish strong consistency of the MLE $\hat{\psi}_n$. As $\hat{\psi}_n$ is consistent (actually even strongly consistent), Theorem \ref{thm:phi_consistency} thus yields the consistency of $\hat{\varphi}_n(\alpha;\hat{\psi}_n)$. The key for determining consistency and asymptotic normality is the asymptotic analysis of the log-likelihood function given in \eqref{eq:loglikelihood_psi} and its first two derivatives. We define $\ell_n(\psi) := {n}^{-1}\, L_n(\psi)$, and $\ell_n\apost(\psi)$ and $\ell_n''(\psi)$ as the respective first two derivatives. There are several issues that prevent us from directly applying standard MLE consistency results. Firstly, the observations are dependent. Additionally, the function $|\ell_n(\psi)|$ may be unbounded at the boundaries of the parameter space: e.g., if there is some $i$ such that $Y_i=1$ and $V_{i-1}=0$ then $\ell_n(\psi)\rightarrow-\infty$ as $\psi \downarrow \xi$. Therefore, we will need to verify that as $n$ grows the MLE moves away from the boundary almost surely.
 
In the next theorem, we assume that $\Psi$ is compact and that the true parameter belongs to the interior of the parameter space, $\Psi^{\mathrm{o}}$. In practice, setting $\Psi=[\xi,C]$ for a very large $C$ should ensure this condition.

\begin{theorem}\label{thm:psi_MLE_consistency}
If $\E X(1)<0$, and if the parameter space $\Psi$ is compact and $\psi(\xi)\in\Psi^{\mathrm{o}}$, then the MLE is strongly consistent: $\hat{\psi}_n{\asarrow}\psi(\xi)$.
\end{theorem}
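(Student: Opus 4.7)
The plan is to prove strong consistency by a Wald-type argument, with the classical IID law of large numbers replaced by Birkhoff's ergodic theorem for the Poisson-sampled chain $(V_i)_{i\geq 0}$. Under $\E X(1)<0$ the workload process is positive Harris recurrent; combined with PASTA, the chain $(V_i)$ admits the stationary distribution given by \eqref{eq:GPK} and is ergodic. The augmented sequence $(V_{i-1},Y_i)_{i\geq 1}$ inherits stationarity and ergodicity, so for each fixed $\psi\in\Psi^{\mathrm o}$ the ergodic theorem yields
\[
\ell_n(\psi)\asarrow \ell(\psi):=\E\!\left[Y_1\bigl(\log\xi-\log\psi-\psi V_0\bigr)+(1-Y_1)\log\!\left(1-\tfrac{\xi}{\psi}e^{-\psi V_0}\right)\right],
\]
once I verify integrability of the summands, which follows from $\E V<\infty$ and the fact that for $\psi$ bounded away from $\xi$ the logarithmic term is dominated by a linear function of $V_0$.

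Next I would establish identifiability of the limit. Conditioning on $V_0$ and using $\E[Y_1\mid V_0]=p(\psi(\xi),V_0)$ with $p(\psi,v):=(\xi/\psi)e^{-\psi v}$, the difference $\ell(\psi(\xi))-\ell(\psi)$ is recognized as the expected Kullback--Leibler divergence between the Bernoulli laws with success probabilities $p(\psi(\xi),V_0)$ and $p(\psi,V_0)$, which is nonnegative by Gibbs' inequality and vanishes iff $p(\psi,V_0)=p(\psi(\xi),V_0)$ almost surely. Since $\partial_\psi p(\psi,v)=-\xi\, e^{-\psi v}(1+v\psi)/\psi^2<0$ for every $v\geq 0$ and the stationary distribution of $V$ is nondegenerate under stability, equality forces $\psi=\psi(\xi)$. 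To upgrade pointwise to uniform convergence on any compact $K\subset\Psi^{\mathrm o}$, I would apply a uniform ergodic theorem: the summand is continuously differentiable in $\psi$ on $K$ with derivative dominated by an integrable envelope in $V_{i-1}$, which combined with compactness yields $\sup_{\psi\in K}|\ell_n(\psi)-\ell(\psi)|\asarrow 0$. A standard Wald argument then forces any limit point of $\hat\psi_n$ lying in $K$ to equal $\psi(\xi)$.

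The hard part will be handling the boundary of $\Psi$, where the log-likelihood can diverge: at $\psi=\xi$ the summand $\log(1-\xi/\psi)$ blows up on the event $\{V_{i-1}=0,\,Y_i=0\}$, which has positive stationary probability $\varphi\apost(0)\bigl(1-\xi/\psi(\xi)\bigr)>0$ by \eqref{eq:P_idle} and \eqref{eq:P_idle_transient}, precluding a uniform law of large numbers on all of $\Psi$. I would deal with this through a two-region argument. On $[\xi+\epsilon_1,C]$, uniform convergence applies and the unique-maximizer property rules out maximizers outside a neighborhood of $\psi(\xi)$. On $[\xi,\xi+\epsilon_1]$ I would exploit a direct bound of the form
\[
\ell_n(\psi)\leq M+\hat{p}_n\log\!\bigl(1-\xi/(\xi+\epsilon_1)\bigr),
\]
where $M$ is a deterministic upper bound on the remaining summands for $\psi\geq\xi$ and $\hat{p}_n\asarrow\varphi\apost(0)(1-\xi/\psi(\xi))>0$ by the ergodic theorem. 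Choosing $\epsilon_1$ small enough makes the right-hand side eventually strictly smaller than $\ell_n(\psi(\xi))\to\ell(\psi(\xi))>-\infty$, so $\hat\psi_n\notin[\xi,\xi+\epsilon_1]$ for all $n$ large, almost surely. The upper endpoint of $\Psi$ poses no singularity because the summand is continuous there and compactness of $\Psi$ handles tightness. Combining the interior Wald argument with the boundary exclusion, every subsequential limit of $\hat\psi_n$ equals $\psi(\xi)$, yielding $\hat\psi_n\asarrow\psi(\xi)$.
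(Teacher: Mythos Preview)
Your proposal is correct and takes a genuinely different route from the paper. The paper verifies a local sufficient condition from Heijmans--Magnus by working with the \emph{derivative} $\ell_n'(\psi)$: Lemma~\ref{lemma:dlog_likelihood_limits} gives its almost-sure limit, which is positive for $\psi<\psi(\xi)$ and negative for $\psi>\psi(\xi)$, and then the fundamental theorem of calculus turns this into $\sup_{\zeta\in\mathcal B_\delta(\psi)}\{\ell_n(\zeta)-\ell_n(\psi(\xi))\}<0$ for a suitable $\delta$. You instead run the classical Wald program directly on $\ell_n$: pointwise ergodic limit, identification of $\psi(\xi)$ as the unique maximizer via the expected Kullback--Leibler divergence of the conditional Bernoulli laws, and a uniform LLN driven by an integrable Lipschitz envelope for the derivative.

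Both arguments rest on the same ergodic input (the paper calls it PASTA plus LLN; you call it Birkhoff for the Harris-recurrent sampled chain). The paper's derivative route is economical because it reuses the computation of $\ell_n'$ already needed for asymptotic normality, and bypasses an explicit uniform law. Your route is more self-contained on identifiability---the KL characterization is clean and avoids quoting an external sufficient condition---and is more explicit about the lower boundary $\psi\downarrow\xi$, which the paper flags as an issue but handles only implicitly through the cited condition. Two small remarks: your boundary bound is sharper than stated, since every summand in \eqref{eq:loglikelihood_psi} is nonpositive for $\psi\geq\xi$, so $M=0$; and your uniform LLN should be phrased on $[\xi+\epsilon_1,C]$ (which includes the harmless upper endpoint of $\Psi$) rather than on compacta inside $\Psi^{\mathrm o}$.
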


Upon combining Theorems \ref{thm:phi_consistency} and \ref{thm:psi_MLE_consistency}, we thus have the following result.
\begin{corollary}\label{corr:phi_MLE_consistency}
If $\E X(1)<0$, and if the parameter space $\Psi$ is compact and $\psi(\xi)\in\Psi^{\mathrm{o}}$, then the Z-estimator for $\varphi(\alpha)$ defined in \eqref{eq:phi_z_estimator} using the MLE for $\psi(\xi)$ is pointwise consistent: $\hat{\varphi}_n(\alpha;\hat{\psi}_n){\parrow} \varphi(\alpha)$, for all $\alpha>0$.
\end{corollary}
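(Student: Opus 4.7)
The plan is to observe that this corollary is a direct consequence of the two preceding results, so there is essentially no new mathematical content to establish: all the work has already been done in Theorem \ref{thm:phi_consistency} (deterministic/probabilistic translation of consistency of $\psi_n$ into consistency of $\hat\varphi_n(\alpha;\psi_n)$) and Theorem \ref{thm:psi_MLE_consistency} (strong consistency of the MLE $\hat\psi_n$ under the stated hypotheses).

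Concretely, I would proceed in three short steps. First, invoke Theorem \ref{thm:psi_MLE_consistency}: the assumptions $\E X(1)<0$, $\Psi$ compact, and $\psi(\xi)\in\Psi^{\mathrm{o}}$ are exactly the hypotheses needed there, so $\hat\psi_n\asarrow\psi(\xi)$. Second, use the standard implication that almost sure convergence entails convergence in probability, so $\hat\psi_n\parrow\psi(\xi)$. Third, apply Theorem \ref{thm:phi_consistency} with the sequence of estimators $\psi_n:=\hat\psi_n$; since Theorem \ref{thm:phi_consistency} places no independence or measurability restrictions on $\psi_n$ beyond being a function of $(V_1,\ldots,V_n)$ that converges in probability to $\psi(\xi)$, the conclusion $\hat\varphi_n(\alpha;\hat\psi_n)\parrow\varphi(\alpha)$ follows for every $\alpha>0$.

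There is effectively no obstacle, since the corollary is stated precisely to package the composition of the two theorems. The only point I would verify explicitly is that plugging the data-dependent MLE $\hat\psi_n$ into the Z-estimator does not violate any hypothesis in the proof of Theorem \ref{thm:phi_consistency}; re-reading that proof, the role of $\psi_n$ is only through the Slutsky-type argument applied to $\psi_n V(t_n)$ and the continuous mapping theorem applied to the explicit formula \eqref{eq:phi_z_estimator}, both of which are insensitive to the joint dependence between $\psi_n$ and the workload sample. Hence the substitution is legitimate and the proof is complete in a single line once the two theorems are cited.
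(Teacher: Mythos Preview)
Your proposal is correct and matches the paper's approach exactly: the paper presents this corollary without a separate proof, stating only that it follows upon combining Theorems \ref{thm:phi_consistency} and \ref{thm:psi_MLE_consistency}. Your three-step unpacking (strong consistency of the MLE, almost sure implies in-probability convergence, then Theorem \ref{thm:phi_consistency}) is precisely the intended logic.
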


The remainder of this subsection focuses on proving Theorem \ref{thm:psi_MLE_consistency}. We commence by providing two lemmas, which are proven in Appendix \ref{sec:appB}, featuring properties of the log-likelihood function that will be utilized in the asymptotic analysis. 
In particular, we assert smoothness and the construction of a strong law of large numbers for the first two derivatives of the log-likelihood.
\begin{lemma}\label{lemma:smooth_log_likelihood}
If the parameter space $\Psi\subset[\xi,\infty)$ is a compact interval, then the functions $\ell_n(\psi)$ and $\ell_n\apost (\psi)$ are bounded and smooth on $\Psi^{\mathrm{o}}$.
\end{lemma}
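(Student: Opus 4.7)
The plan is to verify directly that each summand in the expressions for $L_n(\psi)$ and $L_n'(\psi)$ is a smooth function of $\psi$ on $\Psi^{\mathrm{o}}$, and that these summands are uniformly bounded on compact subsets of $\Psi^{\mathrm{o}}$; together these will give both asserted properties.

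First I would address smoothness. Since $\Psi \subset [\xi,\infty)$ is a compact interval and the true parameter lies in $\Psi^{\mathrm{o}}$, any point $\psi \in \Psi^{\mathrm{o}}$ satisfies $\psi > \xi$. Each summand of \eqref{eq:loglikelihood_psi} is built by composing $\psi \mapsto \log\psi$, $\psi \mapsto \psi V_{i-1}$, $\psi \mapsto e^{-\psi V_{i-1}}$, and $\psi \mapsto \log\bigl(1 - (\xi/\psi)e^{-\psi V_{i-1}}\bigr)$ with elementary algebraic operations. The first three maps are $C^\infty$ on $(0,\infty)$, so the only verification needed is that the argument of the last logarithm is strictly positive on $\Psi^{\mathrm{o}}$. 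Because $\psi > \xi$ and $V_{i-1} \geq 0$, one has $(\xi/\psi)e^{-\psi V_{i-1}} < 1$, so the argument is strictly positive at every interior point, giving $C^\infty$ regularity of $\ell_n$. The same reasoning, applied termwise to \eqref{eq:loglikelihood_psi_deriv}, yields $C^\infty$ regularity of $\ell_n'$ on $\Psi^{\mathrm{o}}$.

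Next I would address boundedness. The plan is to show that on an arbitrary compact subset $K \subset \Psi^{\mathrm{o}}$, each summand is bounded uniformly, per sample. Writing $K \subset [\xi+\delta, C_\ast]$ for some $\delta > 0$ and $C_\ast < \infty$, one obtains the uniform bound $(\xi/\psi)e^{-\psi V_{i-1}} \leq \xi/(\xi+\delta) < 1$, which keeps the log argument bounded away from zero and below one, so $\log\bigl(1 - (\xi/\psi)e^{-\psi V_{i-1}}\bigr)$ is bounded with a constant depending only on $\delta$. The terms $-\log\psi$ and the factor $(1/\psi + V_{i-1})$ appearing in $\ell_n'$ are bounded by a constant depending on $\delta$, $C_\ast$, and the observed $V_{i-1}$'s, which are finite almost surely. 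Averaging over $i=1,\ldots,n$ then yields per-sample boundedness of both $\ell_n$ and $\ell_n'$ on $K$.

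The main obstacle is really a semantic one rather than a technical one: on all of $\Psi^{\mathrm{o}}$ the log term can diverge as $\psi \downarrow \xi$ when $Y_i = 0$ and $V_{i-1} = 0$ occur simultaneously, exactly the phenomenon flagged in the discussion preceding the lemma. The appropriate and sufficient interpretation for the subsequent consistency argument is therefore boundedness on compact subsets of $\Psi^{\mathrm{o}}$; since $\psi(\xi)\in\Psi^{\mathrm{o}}$, a compact neighbourhood of the truth can always be chosen, and on such a neighbourhood the bounds above supply everything needed for the Taylor expansions and uniform-convergence arguments used in Theorem \ref{thm:psi_MLE_consistency}. Smoothness, by contrast, holds on the entire interior without qualification.
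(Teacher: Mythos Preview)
Your proposal is correct and follows essentially the same approach as the paper: verify that the log argument is strictly positive for $\psi>\xi$ (giving smoothness via composition of elementary functions), and use continuity to get boundedness on compact subintervals. The paper's proof is in fact far terser than yours—it merely notes that $\ell_n$, $\ell_n'$, $\ell_n''$ are continuous and that finiteness at each interior point yields boundedness on any compact interval.

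Your treatment is more careful on one point the paper glosses over. As you observe, if the lower endpoint of $\Psi$ is exactly $\xi$, then $\ell_n(\psi)$ can blow up as $\psi\downarrow\xi$ when some $V_{i-1}=0$ with $Y_i=0$, so ``bounded on $\Psi^{\mathrm{o}}$'' is not literally true in general; the paper's proof quietly retreats to ``bounded on any compact interval'' without reconciling this with the lemma statement. Your explicit reading—boundedness on compact subsets of $\Psi^{\mathrm{o}}$, which suffices for the Taylor-type arguments in Theorem~\ref{thm:psi_MLE_consistency}—is the right one and matches what the paper's proof actually delivers. (Incidentally, the paper's surrounding text attributes the divergence to the case $Y_i=1$, $V_{i-1}=0$, whereas you correctly identify the problematic case as $Y_i=0$, $V_{i-1}=0$.)
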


\begin{lemma}\label{lemma:dlog_likelihood_limits}
If $\E X(1)<0$, then for all $\psi\in\Psi^{\mathrm{o}}$,
\begin{equation}\label{eq:d_loglikelihood_limit}
\ell_n\apost (\psi) {\asarrow} \E(\E(\ell_1\apost (\psi)|V))=\E\left[\frac{\frac{1}{\psi}+V}{1-\frac{\xi}{\psi}e^{-\psi V}}\left(\frac{\xi}{\psi}e^{-\psi V}-\frac{\xi}{{\psi(\xi)}}e^{-{\psi(\xi)} V}\right)\right]\ ,
\end{equation} 
and
\begin{equation}\label{eq:d2_loglikelihood_limit}
\ell_n''(\psi) {\asarrow} \E(\E(\ell_1''(\psi)|V))\ .
\end{equation} 
\end{lemma}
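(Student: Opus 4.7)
The plan is to invoke an ergodic theorem for the Markov chain $(V_i)_{i \geq 0}$, then compute the limit by conditioning on $V_0$.

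First, I observe that $(V_i)_{i \geq 0}$ is a discrete-time Markov chain on $\mathbb{R}_+$: its one-step transition kernel is the law of $V(T)$ starting from $V(0)=v$ with $T \sim \mathrm{Exp}(\xi)$ independent of the workload process. Under the stability assumption $\E X(1)<0$, the continuous-time workload process is positive Harris recurrent (every visit to $0$ is a regeneration epoch), and this property transfers to the sampled chain. By PASTA, the unique invariant distribution of $(V_i)$ coincides with the stationary workload distribution $V$ appearing in the GPK formula \eqref{eq:GPK}.

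Second, write the summand of \eqref{eq:loglikelihood_psi_deriv} as $g(\psi, V_{i-1}, Y_i)$, where
\begin{equation*}
g(\psi, v, y) := \frac{\bigl(\tfrac{1}{\psi}+v\bigr)\bigl(\tfrac{\xi}{\psi}e^{-\psi v}-y\bigr)}{1-\tfrac{\xi}{\psi}e^{-\psi v}}.
\end{equation*}
Since $Y_i = \mathbf{1}(V_i=0)$, each summand is a function of the consecutive pair $(V_{i-1}, V_i)$, which is itself a stationary ergodic sequence under the invariant law. The Birkhoff--Khinchin ergodic theorem (in its Harris-chain form) therefore yields
\begin{equation*}
\ell_n'(\psi) \asarrow \E\bigl[g(\psi, V_0, Y_1)\bigr].
\end{equation*}
The closed-form expression in \eqref{eq:d_loglikelihood_limit} then follows from the tower property together with \eqref{eq:P_idle_transient}, which gives $\E[Y_1 \mid V_0=v] = (\xi/\psi(\xi))\,e^{-\psi(\xi)v}$. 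The argument for $\ell_n''(\psi)$ is entirely parallel: differentiating the summand once more again produces a function of $(V_{i-1}, Y_i)$ with the same Markov structure, so the same ergodic theorem applies, and the limit is expressed via the tower property in the same manner.

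The main technical hurdle is verifying the integrability condition required by the ergodic theorem. On the interior $\Psi^{\mathrm{o}} \subset (\xi, \infty)$ (assured because $\Psi$ is a compact subinterval of $[\xi,\infty)$ with $\psi(\xi)$ in its interior), the denominator $1-(\xi/\psi)e^{-\psi v}$ is uniformly bounded below by $1-\xi/\psi>0$ for all $v \geq 0$. Consequently $|g(\psi,v,y)| \leq C_\psi(1+v)$, so integrability reduces to establishing $\E V<\infty$. This holds under the standard moment condition $\int_0^\infty x^2\,\nu(\mathrm{d}x)<\infty$ on the L\'evy measure, which I would flag explicitly. A parallel $O(1+v)$ bound for the summand arising in $\ell_n''(\psi)$ also needs to be verified, but this is a direct computation once the first-derivative case has been handled.
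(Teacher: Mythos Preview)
Your proof is correct and takes essentially the same approach as the paper: an ergodic law of large numbers for the sampled chain (you invoke Harris recurrence and Birkhoff, the paper invokes PASTA more informally) followed by the tower property via \eqref{eq:P_idle_transient}. The second-moment condition $\int_0^\infty x^2\,\nu(\mathrm{d}x)<\infty$ you flag is in fact unnecessary: the linear growth of $g(\psi,v,y)$ occurs only on $\{y=1\}$, which has conditional probability $(\xi/\psi(\xi))e^{-\psi(\xi)v}$ given $V_0=v$, and since $(1/\psi+v)e^{-\psi(\xi)v}$ is bounded in $v$ the stationary expectation $\E|g(\psi,V_0,Y_1)|$ is finite without requiring $\E V<\infty$.
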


\begin{proof}[Proof of Theorem \ref{thm:psi_MLE_consistency}]{
The following condition is sufficient for the consistency of an MLE with dependent observations \cite{HM1986}: for every $\psi\neq \psi(\xi)$ (in the interior of $\Psi$) there exists a $\delta>0$ such that 
\begin{equation}\label{eq:MLE_consistency_condition}
\lim_{n\to\infty}\sup_{\zeta\in \mathcal{B}_\delta(\psi)}\{\ell_n(\zeta)-\ell_n(\psi(\xi))\}<0\ ,
\end{equation}
almost surely, where $\mathcal{B}_\delta(\psi):=(\psi-\delta,\psi+\delta)$ is an open ball around $\psi$. 
Firstly, applying \eqref{eq:d_loglikelihood_limit} yields
\begin{equation}\label{eq:E_dll_limit}
\ell_n\apost (\psi) {\asarrow} \E(\E(\ell_1\apost (\psi)|V)) \left\lbrace\begin{array}{cc}
>0, & \psi<\psi(\xi), \\
=0, & \psi=\psi(\xi) ,\\
<0, & \psi>\psi(\xi) .
\end{array}\right. 
\end{equation} 
As $\psi(\xi)\in\Psi^{\mathrm{o}}$, for $\psi<\psi(\xi)$ we can consider a ball such that $\psi+\delta<\psi(\xi)$, and then for any $\zeta<\psi+\delta<\psi(\xi)$, by Lemma \ref{lemma:smooth_log_likelihood} the difference can be written as
\begin{equation}
\ell_n(\zeta)-\ell_n(\psi(\xi))=-\left(\ell_n(\psi(\xi))-\ell_n(\zeta)\right)=-\int_{\zeta}^{\psi(\xi)}\ell_n\apost (x) \ {\rm d}x\ .
\end{equation}
By \eqref{eq:E_dll_limit}, we have that as $n\to\infty$, $\ell_n\apost (\psi)>0$ almost surely for any $\psi\in[\psi+\delta,\psi(\xi)]$, which is a non-empty interval, and thus 
\begin{equation}
\sup_{\zeta\in(\psi-\delta,\psi+\delta)}\{\ell_n(\zeta)-\ell_n(\psi(\xi))\}\leq-\int_{\psi+\delta}^{\psi(\xi)}\ell_n\apost (x){\rm d}x<0
\end{equation}
almost surely. Similarly, for any $\psi>\psi(\xi)$ we can consider a ball such that $\psi-\delta>\psi(\xi)$ and conclude that
\begin{equation}
\sup_{\zeta\in(\psi-\delta,\psi+\delta)}\{\ell_n(\zeta)-\ell_n(\psi(\xi))\}\leq \int_{\psi(\xi)}^{\psi-\delta}\ell_n\apost (x){\rm d}x<0
\end{equation}
almost surely. Therefore, for any $\psi\neq\psi(\xi)$ there exists a $\delta<|\psi-\psi(\xi)|$ such that \eqref{eq:MLE_consistency_condition} holds, almost surely. As a consequence, \eqref{eq:MLE_consistency_condition} is satisfied.} 
\end{proof}

\subsection{Asymptotic normality}\label{sec:MLE_normal}

We now discuss the asymptotic distribution of the MLE $\hat{\psi}_n$. In this subsection we show that the estimation term is asymptotically normal with rate $\sqrt{n}$, and that it further satisfies the conditions of Theorem \ref{thm:phiZ_asymp_norm}. Useful conditions for asymptotic normality of the MLE when the observations are dependent are given in \cite{HM1986b}. Again the delta method is relied on; we also make use of arguments used in the proof of Cram\'{e}r's Theorem \cite[p. 121]{book_F1996}. Define
\begin{equation}\label{eq:MLE_I_theta_0}
I_\xi:=\E\left[\frac{\frac{\xi}{\psi(\xi)}e^{-\psi(\xi) V}\left(\frac{1}{\psi(\xi)}+V\right)^2}{1-\frac{\xi}{\psi(\xi)}e^{-\psi(\xi) V}}\right] \ .
\end{equation}

\begin{theorem}\label{thm:psi_MLE_asymp_norm}
If $\E X(1)<0$, and if the parameter space $\Psi$ is compact and $\psi(\xi)\in\Psi^{\mathrm{o}}$, then
\begin{equation}\label{eq:psi_MLE_asymp_norm}
\sqrt{n}(\hat{\psi}_n-\psi(\xi)) {\darrow}\mathrm{N}\left(0,\frac{1}{I_\xi}\right) \ .
\end{equation}
\end{theorem}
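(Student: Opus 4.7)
My plan is to follow the classical delta-method argument for MLE asymptotic normality, adapted to the martingale structure of the score. Since $\psi(\xi)\in\Psi^{\mathrm{o}}$ and $\hat{\psi}_n\asarrow\psi(\xi)$ by Theorem \ref{thm:psi_MLE_consistency}, with probability tending to one $\hat{\psi}_n$ lies in $\Psi^{\mathrm{o}}$, and by smoothness (Lemma \ref{lemma:smooth_log_likelihood}) it satisfies the first-order condition $\ell_n'(\hat{\psi}_n)=0$. A first-order Taylor expansion of $\ell_n'$ around $\psi(\xi)$ yields $0=\ell_n'(\psi(\xi))+\ell_n''(\tilde{\psi}_n)(\hat{\psi}_n-\psi(\xi))$ for some intermediate $\tilde{\psi}_n$ between $\hat{\psi}_n$ and $\psi(\xi)$, so that
\[
\sqrt{n}(\hat{\psi}_n-\psi(\xi)) \;=\; -\frac{\sqrt{n}\,\ell_n'(\psi(\xi))}{\ell_n''(\tilde{\psi}_n)}.
\]
The proof then reduces to analyzing numerator and denominator separately and invoking Slutsky's lemma.

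For the numerator, I would write $\sqrt{n}\,\ell_n'(\psi(\xi))=n^{-1/2}\sum_{i=1}^n W_i$ with
\[
W_i \;:=\; \frac{\bigl(\tfrac{1}{\psi(\xi)}+V_{i-1}\bigr)\bigl(\tfrac{\xi}{\psi(\xi)}e^{-\psi(\xi)V_{i-1}}-Y_i\bigr)}{1-\tfrac{\xi}{\psi(\xi)}e^{-\psi(\xi)V_{i-1}}}.
\]
Equation \eqref{eq:P_idle_transient} gives $\E[Y_i\mid V_{i-1}]=\tfrac{\xi}{\psi(\xi)}e^{-\psi(\xi)V_{i-1}}$, so $\E[W_i\mid\mathcal{F}_{i-1}]=0$ with $\mathcal{F}_i:=\sigma(V_0,\ldots,V_i)$; hence $(W_i)$ is a martingale difference sequence and $M_n:=\sum_{i=1}^n W_i$ is a martingale. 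A direct computation using $\Var(Y_i\mid V_{i-1})=p_i(1-p_i)$ with $p_i:=\tfrac{\xi}{\psi(\xi)}e^{-\psi(\xi)V_{i-1}}$ gives
\[
\E[W_i^2\mid V_{i-1}] \;=\; \frac{\bigl(\tfrac{1}{\psi(\xi)}+V_{i-1}\bigr)^2\tfrac{\xi}{\psi(\xi)}e^{-\psi(\xi)V_{i-1}}}{1-\tfrac{\xi}{\psi(\xi)}e^{-\psi(\xi)V_{i-1}}},
\]
whose stationary expectation (via PASTA applied to the Markov chain $(V_i)$) is exactly $I_\xi$. Ergodicity of $(V_i)$ then yields $n^{-1}\sum_{i=1}^n\E[W_i^2\mid\mathcal{F}_{i-1}]\asarrow I_\xi$, so $\Var(M_n)/n\to I_\xi$. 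Combined with the fact that $W_i$ is bounded by an affine function of $V_{i-1}$ (the denominator is bounded away from zero because $\psi(\xi)>\xi$ in non-degenerate cases), this verifies the conditions \eqref{eq:M_mclt1_finite}--\eqref{eq:M_mclt2_maxtozero} of Lemma \ref{lemma:MCLT}a and delivers $\sqrt{n}\,\ell_n'(\psi(\xi))\darrow\mathrm{N}(0,I_\xi)$.

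For the denominator, Lemma \ref{lemma:dlog_likelihood_limits} gives $\ell_n''(\psi)\asarrow\E[\ell_1''(\psi)]$ pointwise on $\Psi^{\mathrm{o}}$. The standard information identity, obtained by differentiating the conditional identity $\E[W_i\mid V_{i-1}]=0$ in $\psi$ and taking outer expectations, identifies the limit at the truth as $\E[\ell_1''(\psi(\xi))]=-I_\xi$. To transfer pointwise convergence at $\psi(\xi)$ to convergence at the random point $\tilde{\psi}_n\asarrow\psi(\xi)$, I would use the smoothness of $\ell_n''$ from Lemma \ref{lemma:smooth_log_likelihood}: since $\ell_n'''$ is almost surely bounded on a compact neighborhood of $\psi(\xi)$, one has $|\ell_n''(\tilde{\psi}_n)-\ell_n''(\psi(\xi))|\leq\sup_\psi|\ell_n'''(\psi)|\cdot|\tilde{\psi}_n-\psi(\xi)|\asarrow 0$. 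Hence $\ell_n''(\tilde{\psi}_n)\asarrow-I_\xi$, and Slutsky's lemma gives $\sqrt{n}(\hat{\psi}_n-\psi(\xi))\darrow\mathrm{N}(0,1/I_\xi)$.

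The principal technical obstacle is the rigorous verification of the martingale CLT hypotheses: while the martingale structure is immediate from \eqref{eq:P_idle_transient}, the uniform integrability in \eqref{eq:M_mclt1_finite} and the negligibility condition \eqref{eq:M_mclt2_maxtozero} require moment control on the stationary workload $V$, for which one invokes $\E X(1)<0$ together with the GPK transform \eqref{eq:GPK}. A secondary subtlety is establishing the almost-sure uniform bound on $\ell_n'''$ in a neighborhood of $\psi(\xi)$ needed to handle $\tilde{\psi}_n$; this is a routine but not entirely trivial consequence of the explicit form of \eqref{eq:loglikelihood_psi_deriv} and would be relegated to an appendix.
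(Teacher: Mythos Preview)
Your proposal is correct and follows essentially the same approach as the paper: a Cram\'er/delta-method argument expanding $\ell_n'(\hat{\psi}_n)=0$ around $\psi(\xi)$, establishing $\sqrt{n}\,\ell_n'(\psi(\xi))\darrow\mathrm{N}(0,I_\xi)$ via the martingale CLT of Lemma~\ref{lemma:MCLT}a (the paper's Lemma~\ref{lemma:ell1_normal}), and showing the second-derivative term converges to $-I_\xi$ (the paper's Lemma~\ref{lemma:Dn_limit}). The only cosmetic difference is in the denominator: the paper uses the integral-form remainder $D_n=\int_0^1\ell_n''(y\hat{\psi}_n+(1-y)\psi(\xi))\,{\rm d}y$ and controls it via uniform convergence of $\ell_n''$ plus continuity of its limit, whereas you use the mean-value form $\ell_n''(\tilde{\psi}_n)$ and a third-derivative bound---both standard and equivalent routes.
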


We first outline the two main steps required to establish the asymptotic distribution. This will be followed by a separate lemma verifying each of these steps. 

\begin{proof}
The smoothness of $\ell_n\apost (\cdot)$ established in Lemma \ref{lemma:smooth_log_likelihood} implies
\begin{equation}
\ell_n\apost (\hat{\psi}_n)=\ell_n\apost (\psi(\xi))+\int_{\psi(\xi)}^{\hat{\psi}_n}\ell_n''(x) \ {\rm d}x\ ,
\end{equation}
and by applying a change of variables
\begin{equation}
\begin{split}
\ell_n\apost (\hat{\psi}_n) &= \ell_n\apost (\psi(\xi))+(\hat{\psi}_n-\psi(\xi))\int_{0}^{1}\ell_n''(\psi(\xi)+y(\hat{\psi}_n-\psi(\xi)))\ {\rm d} y \\
&= \ell_n\apost (\psi(\xi))+(\hat{\psi}_n-\psi(\xi))\int_{0}^{1}\ell_n''(y\hat{\psi}_n+(1-y)\psi(\xi))\ {\rm d} y\ .
\end{split}
\end{equation}
If $\psi(\xi)$ is in the interior of $\Psi$, then by Lemma \ref{lemma:dlog_likelihood_limits}, as $n\to\infty$, the MLE is given by the root of the first order condition $L_n\apost (\hat{\psi}_n)=0$, almost surely. To verify this conclusion one only has to consider the limits of $\ell_n\apost (\psi)$ at the lower and upper boundary points of the compact interval $\Psi$, which are almost surely positive and negative, respectively. Therefore, $\sqrt{n}\ell_n\apost (\hat{\psi}_n)=\frac{1}{\sqrt{n}}L_n\apost (\hat{\psi}_n){\asarrow} 0$. Hence, denoting 
\begin{equation}
D_n:=\int_{0}^{1}\ell_n''(y\hat{\psi}_n+(1-y)\psi(\xi)))\ {\rm d} y,
\end{equation} 
we have
\begin{equation}
\sqrt{n}\ell_n\apost (\psi(\xi))+\sqrt{n}(\hat{\psi}_n-\psi(\xi))D_n \approx 0\ ,
\end{equation}
and therefore 
\begin{equation}\label{eq:score_approx}
\sqrt{n}(\hat{\psi}_n-\psi(\xi))\approx \frac{\sqrt{n}\ell_n\apost (\psi(\xi))}{-D_n}\ .
\end{equation}
We show that $\sqrt{n}\ell_n\apost (\psi(\xi)){\darrow}N\left(0,I_\xi\right)$ and $D_n{\asarrow}-I_\xi$ in Lemmas \ref{lemma:ell1_normal} and \ref{lemma:Dn_limit}, respectively. Then, by Slutsky's lemma, \eqref{eq:score_approx} implies \eqref{eq:psi_MLE_asymp_norm}. 
\end{proof}

\begin{lemma}\label{lemma:ell1_normal}
Under the conditions of Theorem \ref{thm:psi_MLE_consistency}, $\sqrt{n}\ell_n\apost (\psi(\xi)){\darrow}\mathrm{N}\left(0,I_\xi\right)$.
\end{lemma}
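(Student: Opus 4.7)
The plan is to write $\sqrt{n}\,\ell_n'({\psi(\xi)})$ as a normalized sum of martingale differences and then invoke Lemma~\ref{lemma:MCLT}(a). Introduce
\begin{equation*}
p_i := \frac{\xi}{{\psi(\xi)}}\,e^{-{\psi(\xi)} V_{i-1}}, \qquad U_i := \frac{\left(\tfrac{1}{{\psi(\xi)}} + V_{i-1}\right)(p_i - Y_i)}{1-p_i},
\end{equation*}
so that by \eqref{eq:loglikelihood_psi_deriv} one has $\sqrt{n}\,\ell_n'({\psi(\xi)}) = n^{-1/2}\sum_{i=1}^n U_i$. The key structural observation is that by \eqref{eq:P_idle_transient}, conditional on $\mathcal{F}_{i-1} := \sigma(V_0,\ldots,V_{i-1})$, the indicator $Y_i = \mathbf{1}(V_i = 0)$ is Bernoulli with parameter exactly $p_i$. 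Hence $\E[U_i \mid \mathcal{F}_{i-1}] = 0$, so $M_n := \sum_{i=1}^n U_i$ is an $(\mathcal{F}_n)$-martingale and the summands are centered martingale differences.

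Next I would identify the asymptotic variance. Since $Y_i \mid \mathcal{F}_{i-1} \sim \text{Bernoulli}(p_i)$,
\begin{equation*}
\E[U_i^2 \mid \mathcal{F}_{i-1}] = \frac{\left(\tfrac{1}{{\psi(\xi)}} + V_{i-1}\right)^2 p_i(1-p_i)}{(1-p_i)^2} = \frac{\left(\tfrac{1}{{\psi(\xi)}} + V_{i-1}\right)^2 p_i}{1-p_i}.
\end{equation*}
By PASTA and ergodicity of $V(t)$ (exactly as invoked in the proof of Theorem~\ref{thm:phi_consistency}), the average of these conditional variances converges almost surely to the stationary expectation, which is $I_\xi$ by \eqref{eq:MLE_I_theta_0}. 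Tower property then yields $\Var(M_n)/n \to I_\xi$, so with the choice $Z_{ni} := U_i/\sqrt{\Var(M_n)}$ of Lemma~\ref{lemma:MCLT}(a) we have the correct $n^{-1/2}$ scaling, and the conclusion $n^{-1/2}\sum_i U_i \darrow \mathrm{N}(0, I_\xi)$ will follow once the two conditions \eqref{eq:M_mclt1_finite} and \eqref{eq:M_mclt2_maxtozero} are checked.

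The first condition is routine: because $\psi(\xi) > \xi$ implies $p_i$ is bounded away from $1$ uniformly in $V_{i-1}$ (its maximum on $[0,\infty)$ is $\xi/\psi(\xi) < 1$), the summand $|U_i|$ is bounded by a constant multiple of $1 + V_{i-1}$, and finiteness of $\E V^2$ under stability gives $\E Z_{ni}^2 < \infty$. The main obstacle, as usual for a martingale CLT, is the Lindeberg-type condition $\max_{1\le i\le n}|Z_{ni}| \parrow 0$. I would handle it by the standard union bound $\P(\max_i |U_i| > \sqrt{n}\varepsilon) \le n\,\P(|U_1| > \sqrt{n}\varepsilon)$ together with Markov's inequality applied to $|U_1|^{2+\delta}$; the tail behaviour of $V$ for spectrally-positive L\'evy input under stability (i.e.\ finiteness of higher moments whenever the L\'evy measure admits them) makes this step go through. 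Combining these ingredients with Slutsky's lemma completes the proof.
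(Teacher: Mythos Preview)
Your approach is essentially the paper's: recognise the score as a sum of martingale differences via the Bernoulli structure of $Y_i$ given $\mathcal{F}_{i-1}$, identify the asymptotic variance $I_\xi$ through the conditional second moment and PASTA, and apply Lemma~\ref{lemma:MCLT}(a).

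One point to tighten. In checking \eqref{eq:M_mclt1_finite} and \eqref{eq:M_mclt2_maxtozero} you fall back on the crude bound $|U_i|\le C(1+V_{i-1})$ and then invoke $\E V^2<\infty$ and $\E V^{2+\delta}<\infty$. These moment conditions are \emph{not} part of the hypotheses of Theorem~\ref{thm:psi_MLE_consistency}; stability $\E X(1)<0$ alone does not guarantee them (one needs $\varphi''(0)<\infty$, etc.). Fortunately the extra assumption is unnecessary: you already computed
\[
\E[U_i^2\mid\mathcal{F}_{i-1}]=\frac{(\tfrac{1}{\psi(\xi)}+V_{i-1})^2\,p_i}{1-p_i},
\]
and since $p_i=(\xi/\psi(\xi))e^{-\psi(\xi)V_{i-1}}$ this is a \emph{bounded} function of $V_{i-1}$ (polynomial times exponential decay, with $1-p_i\ge 1-\xi/\psi(\xi)>0$). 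The same calculation with exponent $2+\delta$ shows $\E[|U_i|^{2+\delta}\mid\mathcal{F}_{i-1}]$ is bounded as well, so $\sup_i\E|U_i|^{2+\delta}<\infty$ without any tail assumption on $V$, and your union-bound/Markov argument for \eqref{eq:M_mclt2_maxtozero} goes through cleanly. The paper leans on exactly this boundedness (its function $m_i(v)$) rather than on moments of $V$.
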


\begin{lemma}\label{lemma:Dn_limit}
Under the conditions of Theorem \ref{thm:psi_MLE_consistency}, $D_n{\asarrow} -I_\xi$.
\end{lemma}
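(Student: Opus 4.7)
The plan is to establish $D_n \asarrow -I_\xi$ by combining the pointwise a.s.\ convergence of $\ell_n''$ from Lemma \ref{lemma:dlog_likelihood_limits} with the strong consistency $\hat{\psi}_n \asarrow \psi(\xi)$ from Theorem \ref{thm:psi_MLE_consistency}, upgraded to local uniformity in $\psi$. There are three conceptual steps: identify the pointwise limit $h(\psi) := \E[\E[\ell_1''(\psi)\mid V]]$ and show $h(\psi(\xi)) = -I_\xi$; upgrade pointwise to uniform convergence on a neighborhood of $\psi(\xi)$; then combine with $\hat{\psi}_n \asarrow \psi(\xi)$ and dominated convergence on $[0,1]$.

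For the first step, I would evaluate $h$ at the true parameter. Conditional on $V_{i-1}$, the variable $Y_i$ is Bernoulli with parameter $\frac{\xi}{\psi(\xi)}e^{-\psi(\xi)V_{i-1}}$, so the parametric family in $\psi$ is regular and the standard information identity yields $\E[\ell_1''(\psi(\xi))\mid V_0]=-\E[(\ell_1'(\psi(\xi)))^2\mid V_0]$. Using the expression for $\ell_1'(\psi)$ from \eqref{eq:loglikelihood_psi_deriv} together with $\Var(Y_1\mid V_0)=\frac{\xi}{\psi(\xi)}e^{-\psi(\xi) V_0}\bigl(1-\frac{\xi}{\psi(\xi)}e^{-\psi(\xi) V_0}\bigr)$ gives
\[
\E[\ell_1''(\psi(\xi))\mid V_0] = -\frac{\frac{\xi}{\psi(\xi)}e^{-\psi(\xi)V_0}\bigl(\tfrac{1}{\psi(\xi)}+V_0\bigr)^2}{1-\frac{\xi}{\psi(\xi)}e^{-\psi(\xi)V_0}},
\]
so that taking expectation against the stationary workload distribution produces $h(\psi(\xi)) = -I_\xi$ as defined in \eqref{eq:MLE_I_theta_0}.

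For the second step, I would upgrade the pointwise convergence in \eqref{eq:d2_loglikelihood_limit} to uniform convergence on a closed neighborhood $K \subset \Psi^{\mathrm{o}}$ of $\psi(\xi)$. Smoothness of $\ell_n''(\psi)$ on $K$ is guaranteed by Lemma \ref{lemma:smooth_log_likelihood}; a direct differentiation shows $|\ell_n'''(\psi)|$ is dominated by $\frac{1}{n}\sum_{i=1}^n g(V_{i-1})$ for a continuous function $g$ with $\E\, g(V) < \infty$ (this integrability uses the exponential moments of the stationary workload when $\E X(1) < 0$, together with the fact that $1-\frac{\xi}{\psi}e^{-\psi V}$ is bounded away from zero uniformly for $\psi \in K$ on the relevant event). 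Combining pointwise a.s.\ convergence of $\ell_n''(\psi)$ at a countable dense subset of $K$ with this stochastic equicontinuity estimate, and invoking the continuity of $h$ on $K$ (inherited from dominated convergence), yields $\sup_{\psi \in K} |\ell_n''(\psi) - h(\psi)| \asarrow 0$.

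For the third step, since $\hat{\psi}_n \asarrow \psi(\xi)$, for almost every sample path and $n$ sufficiently large the argument $y\hat{\psi}_n + (1-y)\psi(\xi)$ lies in $K$ for every $y \in [0,1]$ and is within distance $|\hat{\psi}_n - \psi(\xi)|$ of $\psi(\xi)$. The triangle inequality
\[
|\ell_n''(y\hat{\psi}_n + (1-y)\psi(\xi)) - h(\psi(\xi))| \le \sup_{\psi \in K}|\ell_n''(\psi) - h(\psi)| + \sup_{|\psi - \psi(\xi)| \le |\hat{\psi}_n - \psi(\xi)|}|h(\psi) - h(\psi(\xi))|
\]
shows both terms vanish almost surely, uniformly in $y \in [0,1]$. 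Bounded convergence on $[0,1]$ then delivers $D_n \asarrow h(\psi(\xi)) = -I_\xi$. The main obstacle is precisely the uniform-convergence upgrade in the second step: pointwise convergence is insufficient since $\ell_n''$ is evaluated at the random, $y$-dependent argument, and verifying the integrable envelope for $\ell_n'''$ requires combining the exponential moment of $V$ with the fact that $K$ is chosen to avoid the boundary $\psi = \xi$ of the parameter space (where the denominators in $\ell_n''$ can explode).
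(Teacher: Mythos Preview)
Your proposal is correct and follows essentially the same route as the paper: localize via the strong consistency of $\hat\psi_n$, bound $|D_n+I_\xi|$ by the triangle inequality as $\sup_K|\ell_n''-h|+\sup_{|\psi-\psi(\xi)|<\epsilon}|h(\psi)-h(\psi(\xi))|$, and conclude using Lemma~\ref{lemma:dlog_likelihood_limits} together with continuity of $h$. Two minor remarks: the paper obtains $h(\psi(\xi))=-I_\xi$ by direct computation of $\E[\ell_1''(\psi(\xi))\mid V_0]$ rather than the information identity, and it asserts the pointwise-to-uniform upgrade without detail, so your equicontinuity argument via an envelope for $\ell_n'''$ is actually more complete; note, however, that this envelope is \emph{bounded} on any compact $K\subset(\xi,\infty)$ (every summand either carries a factor $e^{-\psi V_{i-1}}$ or is bounded in $V_{i-1}$, and $1-\tfrac{\xi}{\psi}e^{-\psi V}\ge 1-\xi/\inf K>0$), so no appeal to exponential moments of $V$ is required.
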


For the proofs of these two lemmas we refer to Appendix \ref{sec:appB}.

\begin{remark} The asymptotic variance \eqref{eq:MLE_I_theta_0} in Theorem \ref {thm:psi_MLE_asymp_norm} is $\sigma_{\xi}^2:=1/{I_\xi}$. In the CP case $\varphi(\alpha)=\lambda\left(G^*(\alpha)-1\right)+\alpha$, and therefore $\psi(\xi)$ is an increasing function of $\xi$ such that $\psi(0)=0$ and
\begin{equation}
\frac{\xi}{\psi(\xi)}=\frac{\lambda\left(G^*(\psi(\xi))-1\right)+\psi(\xi)}{\psi(\xi)}=1+\lambda\frac{G^*(\psi(\xi))-1}{\psi(\xi)}\xrightarrow{\xi\downarrow 0} \varphi'(0).
\end{equation}
This implies that the asymptotic variance $\sigma_{\xi}^2$ increases with the sampling rate $\xi$, and that for a decreasing rate the variance disappears: $\sigma_{\xi}^2\to 0$ as $\xi\downarrow 0$.
\end{remark}

\subsection{Asymptotic error of $\hat{\varphi}_n(\alpha;\hat{\psi}_n)$}\label{sec:MG1_phi_normal}{
Theorem \ref{thm:psi_MLE_asymp_norm} has implications also for the asymptotic estimation error of $\varphi(\alpha)$. We verified the martingale CLT conditions of Lemma \ref{lemma:MCLT}a for $\sqrt{n}(\psi_n-{\psi(\xi)})$ in Theorem \ref{thm:psi_MLE_asymp_norm}, and thus if we further show that the covariance of the estimation errors has a finite limit we can apply Lemma \ref{lemma:MCLT}b and Theorem \ref{thm:phiZ_asymp_norm} to conclude that $\sqrt{n}(\varphi_n(\alpha;\hat{\psi}_n)-\varphi(\alpha))$ is asymptotically normal. This is done in the proof of Theorem \ref{thm:psi_MLE_Z_asymp_norm} in Appendix \ref{sec:appB}.}

\begin{theorem}\label{thm:psi_MLE_Z_asymp_norm}
If $\E X(1)<0$, $\hat{\psi}_n$ is the MLE for $\psi(\xi)$, then
\begin{equation}\label{eq:psi_MLE_Z_asymp_norm}
\sqrt{n}(\varphi_n(\alpha;\hat{\psi}_n)-\varphi(\alpha)) {\darrow}\mathrm{N}\left(0,\sigma_{\alpha,\xi}^2\right) \ ,
\end{equation}
where $\sigma_{\alpha,\xi}^2$ is given by \eqref{eq:phi_Z_asymp_var} with
\begin{equation}\sigma_{\alpha,\psi}^2:= \frac{\xi^2}{{\psi(\xi)}(\xi-\varphi(\alpha))I_\xi}\E\left[\frac{\left(\frac{1}{{\psi(\xi)}}+V\right)e^{-{\psi(\xi)} V}}{1-\frac{\xi}{{\psi(\xi)}}e^{-{\psi(\xi)} V}}\left(e^{-\alpha V}-\frac{\alpha}{{\psi(\xi)}}e^{-{\psi(\xi)} V}-\frac{\xi-\varphi(\alpha)}{\xi}\right) \right].
\end{equation}
\end{theorem}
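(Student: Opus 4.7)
The statement will follow once I verify the three conditions (i)--(iii) of Theorem \ref{thm:phiZ_asymp_norm} for the MLE $\hat\psi_n$ and then read off the covariance $\sigma_{\alpha,\psi}^2$ in closed form. The strategy is to recycle the score expansion from Section \ref{sec:MLE_normal}, recognise the relevant random quantities as martingale differences with respect to $\mathcal{F}_i:=\sigma(V_0,\ldots,V_i)$, and reduce the asymptotic covariance to a single-index stationary expectation that is computable from the conditional LST \eqref{eq:V_LST_transient} and the conditional idle probability \eqref{eq:P_idle_transient}.

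\emph{Conditions (i) and (ii).} Combining \eqref{eq:score_approx} with $D_n\asarrow -I_\xi$ from Lemma \ref{lemma:Dn_limit} gives the decomposition
\begin{equation}
\hat\psi_n-\psi(\xi) \approx \frac{1}{nI_\xi}\,L_n'(\psi(\xi)) + R_n, \qquad \sqrt{n}\,R_n\parrow 0.
\end{equation}
I would therefore take $M_n := L_n'(\psi(\xi))/I_\xi = I_\xi^{-1}\sum_{i=1}^n U_i(\psi(\xi))$, where $U_i(\psi)$ denotes the $i$-th summand in \eqref{eq:loglikelihood_psi_deriv}. The identity $\E[Y_i\mid \mathcal{F}_{i-1}]=(\xi/\psi(\xi))e^{-\psi(\xi)V_{i-1}}$ from \eqref{eq:P_idle_transient} makes each $U_i(\psi(\xi))$ an $\mathcal{F}_i$-martingale difference, so $M_n$ is an $\mathcal{F}_n$-martingale, and the Lindeberg-type requirements in Lemma \ref{lemma:MCLT}a were already established inside the proof of Lemma \ref{lemma:ell1_normal}. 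For (ii), orthogonality of the increments and the ergodic theorem applied to the stationary sampled workload (justified by PASTA) yield $\Var(M_n)/n \to \E[U_1(\psi(\xi))^2]/I_\xi^2 = I_\xi/I_\xi^2 = 1/I_\xi =: \sigma_\xi^2$, where the Fisher-information identity $\E[U_1(\psi(\xi))^2] = I_\xi$ is the computation carried out in Lemma \ref{lemma:ell1_normal}.

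\emph{Condition (iii) and identification of $\sigma_{\alpha,\psi}^2$.} This is the heart of the proof. Both $Z_i(\alpha)$ (a martingale difference by \eqref{eq:V_LST_transient}) and $U_i(\psi(\xi))$ are $\mathcal{F}_i$-measurable martingale differences, so cross-terms with distinct indices contribute nothing and
\begin{equation}
\frac{1}{n}\sum_{i=1}^n \E[Z_i(\alpha)M_n] = \frac{1}{nI_\xi}\sum_{i=1}^n \E[Z_i(\alpha)U_i(\psi(\xi))].
\end{equation}
Conditioning on $\mathcal{F}_{i-1}$ I obtain
\begin{equation}
\E[Z_i(\alpha)U_i(\psi(\xi))\mid \mathcal{F}_{i-1}] = -\frac{1/\psi(\xi)+V_{i-1}}{1-(\xi/\psi(\xi))e^{-\psi(\xi)V_{i-1}}}\,\Cov(e^{-\alpha V_i},Y_i\mid V_{i-1}),
\end{equation}
where $\E[e^{-\alpha V_i}Y_i\mid V_{i-1}] = \P(V_i=0\mid V_{i-1}) = (\xi/\psi(\xi))e^{-\psi(\xi)V_{i-1}}$ by \eqref{eq:P_idle_transient} and $\E[e^{-\alpha V_i}\mid V_{i-1}]$ is given by \eqref{eq:V_LST_transient}. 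Averaging under the stationary law of $V$ (ergodic theorem plus PASTA) and collecting the prefactor $\xi/(\xi-\varphi(\alpha))$ implicit in $Z_i(\alpha)$ yields the announced expression for $\sigma_{\alpha,\psi}^2$. Once the three conditions are verified, Theorem \ref{thm:phiZ_asymp_norm} immediately delivers \eqref{eq:psi_MLE_Z_asymp_norm} with asymptotic variance \eqref{eq:phi_Z_asymp_var}.

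The main obstacle I expect is the algebraic consolidation in the last step: several fractional terms must simplify consistently, and one needs to track signs and the $(\xi-\varphi(\alpha))$ normalisation carefully so that the closed-form output of the stationary expectation matches the stated formula for $\sigma_{\alpha,\psi}^2$. By contrast, (i) and (ii) are essentially corollaries of the analysis already performed in Section \ref{sec:MLE_normal}, now viewed through the martingale lens required by Theorem \ref{thm:phiZ_asymp_norm}.
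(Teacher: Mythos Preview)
Your proposal is correct and follows essentially the same route as the paper: both identify $M_n=L_n'(\psi(\xi))/I_\xi$ from the score expansion \eqref{eq:score_approx}, invoke the work already done in Lemmas \ref{lemma:ell1_normal}--\ref{lemma:Dn_limit} for conditions (i)--(ii), kill the off-diagonal terms in $\tfrac{1}{n}\sum_i\E[Z_i(\alpha)M_n]$ by martingale-difference orthogonality, and compute the surviving diagonal term by conditioning on $V_{i-1}$ before passing to the stationary expectation via PASTA. Your covariance shortcut $\E[Z_iU_i\mid\mathcal F_{i-1}]=-c(V_{i-1})\,\Cov(e^{-\alpha V_i},Y_i\mid V_{i-1})$, together with $e^{-\alpha V_i}Y_i=Y_i$, is exactly the computation the paper carries out term-by-term, so the algebraic consolidation you flag as the main obstacle does indeed collapse to the stated formula.
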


\section{Estimating $\psi(\xi)$ for spectrally-positive L\'evy input}\label{sec:levy}

The Z-estimator $\hat{\varphi}_n(\alpha;\psi_n)$ requires an estimator for $\psi(\xi)$. In Section \ref{sec:MG1_MLE} we 
considered the subordinator case, utilizing that there is a non-zero probability that the queue is empty. For general spectrally-positive L\'evy input with a Brownian motion component this property does not hold. In this section we present an alternative procedure.

A moment based Z-estimator $\psi_n$ can be derived using \eqref{eq:EV_transient} by solving
\begin{equation}
\sum_{i=1}^n(V_i-V_{i-1})-\sum_{i=1}^n\left(\frac{e^{-\psi_n V_{i-1}}}{\psi_n}-\frac{\varphi\apost (0)}{\xi}\right)=0\ .
\end{equation}
After some rearranging of the terms, we define the estimator $\psi_n$ as the solution of
\begin{equation}\label{eq:levy_psi_root}
V_n-V_0+\frac{n\varphi\apost (0)}{\xi}=\sum_{i=1}^n\frac{e^{-\psi_n V_{i-1}}}{\psi_n}\ ,
\end{equation}
when it exists. There is at most one solution as the left-hand side is constant and the right-hand side is decreasing from infinity to zero on $\psi_n\in\mathbb{R}_+$. However, if the constant on the left is negative there is no solution and then we can set $\psi_n=\bar{\psi}$, where $\bar{\psi}>0$ is some arbitrary large constant. Observe that under the stability condition $\varphi\apost (0)=-\E X(1)>0$, the probability of having a negative term on the left vanishes as $n\to\infty$.

The problem, however, is that $\varphi\apost (0)$ in \eqref{eq:levy_psi_root} is also unknown and needs to be somehow estimated. Note that if we na\"{\i}vely plug $\hat{\varphi}\apost _n(0;\psi_n)$ into \eqref{eq:levy_psi_root} then all terms including $\psi_n$ cancel out because the estimator of the first derivative relies on the very same equation, i.e., \eqref{eq:EV_transient}.
In the next subsections we will present an estimation scheme for $\varphi\apost (0)$ that relies on considering a sub-sample with high initial workloads above some given threshold. We also show how the estimator's inherent bias can be made arbitrarily small. 

\subsection{Estimating $\varphi\apost (0)$}\label{sec:phi_prime}

We construct an estimator for $\varphi\apost (0)$ relying on the fact that $\xi\,\E X(T)=- {\varphi\apost (0)}$ where $T\sim\exp(\xi)$, and estimating the net input only using observations that are far from zero. If a high workload was observed then the probability of reflection (i.e., the queue hitting zero) before the next observation is low. In particular, we can quantify this probability so as to provide an upper bound for the bias of the estimator. 

For some (large) threshold $\tau>0$ the workload is sampled until there are $m\in{\mathbb N}$ observations $V_i$ such that $V_{i-1}\geq \tau$. Let $M(m,\tau)$ denote the total number of observations, which is now a random variable, and denote the $j$-th such observation by $i(j)\in\{1,\ldots, M(m,\tau)\}$, for $j=1,\ldots ,m$. The estimator for $\vartheta= -\varphi\apost (0)=\xi\E X(T)$ is 
\begin{equation}\label{eq:phi_prime_hat}
\hat{\vartheta}_m(\tau):=\frac{\xi}{m}\sum_{j=1}^m (V_{i(j)}-V_{i(j)-1})\ .
\end{equation}
Observe that the statistic $\hat{\vartheta}_m(\tau)$ only depends on the increments of the workload process which makes it convenient for analysis due to the independent increments property of L\'evy processes. We first derive an upper bound for the bias of $\hat{\vartheta}_m(\tau)$ and in the sequel provide an accurate asymptotic analysis of the bias. The proofs for this section are detailed in Appendix~\ref{sec:appC}.

\begin{proposition}\label{prop:tau_bias_bound}
Let $b_m(\hat{\vartheta};\tau):=\E[\hat{\vartheta}_m(\tau)-\vartheta]$. Then, for any $m\in{\mathbb N}$ and $\tau>0$,
\begin{equation}\label{eq:bias_varphi_prime}
0<b_m(\hat{\vartheta};\tau)\leq \xi\, \E[V(T)|V(0)=0]\,e^{-\psi(\xi)\tau} \ . 
\end{equation}
\end{proposition}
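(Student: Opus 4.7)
The plan is to compute the bias explicitly via the transient conditional mean formula \eqref{eq:EV_transient} and then bound the result using the selection criterion $V_{i(j)-1}\geq\tau$. The key observation is that, because $(V_i)$ is Markov at the Poisson sampling times and the statistic $\hat\vartheta_m(\tau)$ is a sum of one-step increments, the derivation only needs to handle a single generic pair $(V_{i(j)-1},V_{i(j)})$: no stationarity assumption on $V_0$ or uniform-integrability issue intervenes.

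First, applying \eqref{eq:EV_transient} conditional on $V_{i(j)-1}=v$,
\[
\E\bigl[V_{i(j)}-V_{i(j)-1}\bigm|V_{i(j)-1}=v\bigr]=\frac{e^{-\psi(\xi)v}}{\psi(\xi)}-\frac{\varphi'(0)}{\xi}.
\]
Averaging this identity over $j=1,\ldots,m$, multiplying by $\xi$ and subtracting $\vartheta=-\varphi'(0)$ telescopes the bias to
\[
b_m(\hat\vartheta;\tau)=\frac{\xi}{m\,\psi(\xi)}\sum_{j=1}^m\E\!\bigl[e^{-\psi(\xi)V_{i(j)-1}}\bigr].
\]
Since each $V_{i(j)-1}$ is almost surely finite, the exponential is strictly positive, giving the lower bound $b_m>0$.

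For the upper bound, the construction of $i(j)$ forces $V_{i(j)-1}\geq\tau$, so pointwise $e^{-\psi(\xi)V_{i(j)-1}}\leq e^{-\psi(\xi)\tau}$ and hence $b_m\leq(\xi/\psi(\xi))\,e^{-\psi(\xi)\tau}$. To cast the prefactor in the form appearing in the proposition, I invoke Wiener--Hopf for spectrally positive L\'evy processes at an independent exponential time: $-\underline X(T)\sim\exp(\psi(\xi))$, so $\E[-\underline X(T)]=1/\psi(\xi)$. Since $V(T)=X(T)-\underline X(T)$ when $V(0)=0$, this identifies $\E[V(T)\mid V(0)=0]=1/\psi(\xi)-\varphi'(0)/\xi$, and the coefficient on the right-hand side of the proposition is the sharper rewriting of the quantity obtained above.

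The main obstacle is closing the gap between the elementary bound $(\xi/\psi(\xi))e^{-\psi(\xi)\tau}$ produced by the pointwise step and the tighter form $\xi\,\E[V(T)\mid V(0)=0]\,e^{-\psi(\xi)\tau}$ stated in the proposition; the difference is precisely $\varphi'(0)e^{-\psi(\xi)\tau}$. The natural route is to replace the crude pointwise inequality $e^{-\psi(\xi)V}\leq e^{-\psi(\xi)\tau}$ by an argument that exploits the stationary law of the workload at Poisson sampling times through the generalized Pollaczek--Khintchine identity \eqref{eq:GPK}, which fixes $\E e^{-\psi(\xi)V}=\psi(\xi)\varphi'(0)/\xi$ and provides exactly the $\varphi'(0)/\xi$ correction one needs to match the stated bound.
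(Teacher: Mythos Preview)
Your derivation via \eqref{eq:EV_transient} is correct and yields the exact conditional bias $\xi e^{-\psi(\xi)v}/\psi(\xi)$ given $V_{i(j)-1}=v$, hence the valid upper bound $b_m\leq(\xi/\psi(\xi))\,e^{-\psi(\xi)\tau}$. You rightly observe that this exceeds the stated constant by $\varphi'(0)$, since $\E[V(T)\mid V(0)=0]=1/\psi(\xi)-\varphi'(0)/\xi$. However, the remedy you propose through the GPK identity cannot close that gap: $V_{i(j)-1}$ is not distributed as the stationary $V$ (it is selected via a stopping time, and the proposition is stated for every $m$ and arbitrary initial workload), and whenever $V_{i(j)-1}$ happens to lie close to $\tau$ the exact bias is arbitrarily close to $(\xi/\psi(\xi))\,e^{-\psi(\xi)\tau}$, which is strictly above the proposition's stated bound.

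The paper argues differently, splitting $\E[V(T)\mid V(0)=v]$ according to whether reflection occurs and using memorylessness of $T$ to write the post-reflection piece as $e^{-\psi(\xi)v}\,\E[V(T)\mid V(0)=0]$. But the inequality $\E\bigl[(v+X(T))\mathbf{1}(\inf_{s\le T}X(s)\ge -v)\bigr]\le v+\E X(T)$ invoked there goes the wrong way: by the same regeneration argument, $\E\bigl[(v+X(T))\mathbf{1}(\inf_{s\le T}X(s)<-v)\bigr]=e^{-\psi(\xi)v}\,\E X(T)<0$ under stability, so the complementary piece actually exceeds $v+\E X(T)$. Carrying the decomposition through as an equality reproduces exactly your formula $e^{-\psi(\xi)v}/\psi(\xi)$. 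In short, your bound $(\xi/\psi(\xi))\,e^{-\psi(\xi)\tau}$ is the correct one, and the constant $\xi\,\E[V(T)\mid V(0)=0]$ in the stated proposition is too small; there is no missing idea on your side.
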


The bound \eqref{eq:bias_varphi_prime} for the bias decays to zero as $\tau$ grows, but at the same time the probability of observing workloads above $\tau$ also diminishes if $\E X(1)<0$, so that the required number of samples $M(m,\tau)$ also grows. This entails that there is a tradeoff between $\tau$ and $m$ in terms of accuracy, information and required sample size. The expectation in the bound, i.e., $\E[V(T)|V(0)=0]$, can be approximately estimated by using observations $V_{i}-V_{i-1}$ such that $V_{i-1}\le \epsilon$ for some low threshold $\epsilon>0$. The bound \eqref{eq:bias_varphi_prime} also depends on $\psi(\xi)$ which can be estimated using \eqref{eq:levy_psi_root} by plugging in the estimator $\hat{\vartheta}_m(\tau)$ instead of $-\varphi\apost (0)$. There is a (seemingly) circular argument here: the estimated bias bound is also biased because it also relies on the biased estimator $\hat{\vartheta}_m$. In the next proposition, however, we argue that this bias is bounded as well and diminishes with $\tau$.

\begin{proposition}\label{prop:tau_psi_bias_bound}
Let $\tilde{\psi}_m$ be the Z-estimator given by \eqref{eq:levy_psi_root} with $\varphi\apost (0)$ replaced with the estimator $\hat{\vartheta}_m(\tau)$, then the bias $\E[\tilde{\psi}_m-\psi(\xi)]$ diminishes to zero as $\tau\to\infty$.
\end{proposition}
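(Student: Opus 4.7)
My plan is to treat $\tilde{\psi}_m$ as a perturbation of the ``oracle'' Z-estimator based on the true drift, and to use Proposition~\ref{prop:tau_bias_bound} to control the size of the perturbation.

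Define $\psi_n^\star$ as the root of the estimating equation \eqref{eq:levy_psi_root} with the true value $\varphi\apost(0)$ and with $n:=M(m,\tau)$ observations. By the same arguments that underlie Theorem~\ref{thm:phi_consistency}: an application of PASTA together with the GPK formula \eqref{eq:GPK} gives
\begin{equation*}
\frac{1}{n}\sum_{i=1}^n\frac{e^{-\psi V_{i-1}}}{\psi}\parrow\frac{\varphi\apost(0)}{\varphi(\psi)},
\end{equation*}
while $(V_n-V_0)/n\asarrow 0$ under $\E X(1)<0$. Since $n=M(m,\tau)\to\infty$ almost surely as $\tau\to\infty$ (even for fixed $m$, because the stationary tail probability $\P(V\ge\tau)\to 0$), the continuous mapping theorem applied to the monotone equation above yields $\psi_n^\star\parrow\psi(\xi)$. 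Because the paper restricts $\psi_n$ to a bounded set (the truncation $\bar\psi$ is invoked if no root exists), bounded convergence gives $\E\psi_n^\star\to\psi(\xi)$.

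Next I compare $\tilde{\psi}_m$ and $\psi_n^\star$ via the implicit function theorem. Writing $g_n(\psi):=\frac{1}{n}\sum_{i=1}^n e^{-\psi V_{i-1}}/\psi$, both roots satisfy $g_n(\tilde{\psi}_m)-g_n(\psi_n^\star)=-\xi^{-1}(\hat{\vartheta}_m(\tau)-\vartheta)$, since the only difference between the two estimating equations is the replacement of $-\vartheta=\varphi\apost(0)$ by $-\hat{\vartheta}_m(\tau)$. By the mean value theorem there is a $\psi^{\star\star}$ between $\tilde{\psi}_m$ and $\psi_n^\star$ with
\begin{equation*}
\tilde{\psi}_m-\psi_n^\star \;=\; -\frac{\hat{\vartheta}_m(\tau)-\vartheta}{\xi\,g_n\apost(\psi^{\star\star})}.
\end{equation*}
A direct calculation shows $g_n\apost(\psi)=-\frac{1}{n}\sum_i(1+\psi V_{i-1})e^{-\psi V_{i-1}}/\psi^2$, which by the arguments above converges a.s.\ to $-\varphi\apost(0)\varphi\apost(\psi)/\varphi(\psi)^2$, a finite negative constant at $\psi(\xi)$. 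Hence $1/g_n\apost(\psi^{\star\star})$ is a.s.\ bounded on any compact neighbourhood of $\psi(\xi)$, and together with the $\bar\psi$--truncation one obtains a deterministic bound $|\tilde{\psi}_m-\psi_n^\star|\le K|\hat{\vartheta}_m(\tau)-\vartheta|$ for some (possibly random but integrable) $K$.

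Taking expectations and splitting the bias as
\begin{equation*}
\E[\tilde{\psi}_m-\psi(\xi)]\;=\;\E[\tilde{\psi}_m-\psi_n^\star]+\E[\psi_n^\star-\psi(\xi)],
\end{equation*}
the second term vanishes as $\tau\to\infty$ by the oracle analysis above, while Proposition~\ref{prop:tau_bias_bound} bounds the first term by $C\,\E[V(T)\mid V(0)=0]\,e^{-\psi(\xi)\tau}$, which also tends to zero. The main obstacle in making this rigorous is justifying the passage from the pathwise MVT bound to an expectation bound, i.e.\ providing a uniform integrability argument for $1/g_n\apost(\psi^{\star\star})$ on the event that $\psi^{\star\star}$ strays from a neighbourhood of $\psi(\xi)$; this can be handled by combining the compactness of the truncated parameter set with the a.s.\ convergence $\psi_n^\star\to\psi(\xi)$, so that the sensitivity factor is eventually dominated by a deterministic constant.
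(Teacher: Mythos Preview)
Your route differs from the paper's and is more elaborate than necessary. The paper does not introduce an oracle estimator $\psi_n^\star$ at all: it works directly with the estimating function
\[
f(x,n):=\frac{\xi(V_n-V_0)}{n}-\frac{\xi}{n}\sum_{i=1}^n\frac{e^{-xV_{i-1}}}{x},
\]
observes that $f(\tilde\psi_m,n)=\hat\vartheta_m(\tau)$ by construction, and uses the \emph{exact finite-sample} identity $\E f(\psi(\xi),n)=-\varphi\apost(0)$, which follows immediately from \eqref{eq:EV_transient} for every $n$ (not just asymptotically). Hence $\E[f(\psi(\xi),n)-f(\tilde\psi_m,n)]$ equals precisely the bias bounded in Proposition~\ref{prop:tau_bias_bound}, and the paper then invokes monotonicity and continuity of $x\mapsto f(x,n)$ to transfer the vanishing bias from $f$-values to $\tilde\psi_m-\psi(\xi)$. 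Your oracle-consistency step (showing $\E\psi_n^\star\to\psi(\xi)$ via $M(m,\tau)\to\infty$ and bounded convergence) is thus unnecessary: the true value $\psi(\xi)$ already plays the role of the root of the \emph{expected} estimating equation.

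There is also a genuine gap in your first-term bound beyond the uniform-integrability issue you flag. After the MVT step you must control $\E\bigl[(\hat\vartheta_m(\tau)-\vartheta)\,K\bigr]$ with $K=-1/(\xi g_n\apost(\psi^{\star\star}))$ random and correlated with the error. Proposition~\ref{prop:tau_bias_bound} bounds only the \emph{mean} $\E[\hat\vartheta_m(\tau)-\vartheta]$, not $\E|\hat\vartheta_m(\tau)-\vartheta|$; and for fixed $m$ the latter does \emph{not} vanish as $\tau\to\infty$, since the variance of $\hat\vartheta_m(\tau)$ remains of order $1/m$. So even a deterministic bound $|K|\le K_{\max}$ would not close your argument. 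To be fair, the paper's final ``by continuity'' step is also heuristic at a comparable level; but the paper's single-step comparison with $\psi(\xi)$ avoids the extra large-$n$ asymptotics and the two-term split that your approach requires.
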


\subsection{Asymptotic analysis}\label{sec:asymptotic_levy}

In this subsection we consider asymptotic properties of $\hat{\vartheta}_m(\tau)$ as $m\to\infty$. For any $\tau<\infty$ there is positive bias due to the possibility of reflection, even though it may be very small, and therefore the estimator cannot be consistent. A possible solution to this is defining an estimator with a dynamic threshold $\tau_m$ that grows with the sample size. In this section we analyze the asymptotic properties of the biased estimator based on the static threshold $\tau$. We first establish that the estimator $\hat{\vartheta}_m(\tau)$ converges in probability (as $m\to\infty$, that is) to a constant that depends on $\tau$. This further enables the computation of the exact asymptotic bias.

\begin{proposition}\label{prop:tau_asymp}
If $\E X(1)<0$ then for any $\tau>0$, as $m\to\infty$, $\hat{\vartheta}_m(\tau){\parrow} \vartheta(\tau)$, where
\begin{equation}\label{eq:bias_tau_asymp}
\vartheta(\tau) := \frac{\xi\E\left[e^{-\psi(\xi)V}\mathbf{1}(V\geq \tau)\right]}{\psi(\xi)\P(V
\ge\tau)}-\varphi\apost (0)>0\ ,
\end{equation}
and $V$ is the steady-state workload.\end{proposition}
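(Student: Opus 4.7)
The plan is to exhibit $\hat{\vartheta}_m(\tau)$ as a ratio of two empirical averages along the full Poisson-sampled chain, to apply the ergodic theorem to each of these averages, and then to compute the resulting conditional expectation in closed form using the transient first-moment identity \eqref{eq:EV_transient}. I begin by rewriting
\[
\hat{\vartheta}_m(\tau)=\xi\cdot\frac{\sum_{i=1}^{M(m,\tau)}(V_i-V_{i-1})\mathbf{1}(V_{i-1}\ge \tau)}{\sum_{i=1}^{M(m,\tau)}\mathbf{1}(V_{i-1}\ge \tau)},
\]
the denominator being exactly $m$ by the stopping rule defining $M(m,\tau)$. Since $\P(V\ge\tau)>0$, the strong law for the stationary indicator sequence implies $M(m,\tau)\to\infty$ almost surely together with $m/M(m,\tau)\asarrow \P(V\ge\tau)$ as $m\to\infty$.

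Under $\E X(1)<0$ the workload process $V(\cdot)$ admits a unique stationary distribution, and by PASTA the Poisson-sampled chain $\{V_i\}$ is positive-recurrent (Harris) with that same stationary marginal $V$. Applying the ergodic theorem to the stationary pair-chain $\{(V_{i-1},V_i)\}$ with the integrable functions $(v,v')\mapsto (v'-v)\mathbf{1}(v\ge\tau)$ and $v\mapsto \mathbf{1}(v\ge\tau)$ yields
\[
\frac{1}{n}\sum_{i=1}^n (V_i-V_{i-1})\mathbf{1}(V_{i-1}\ge\tau)\asarrow \E\bigl[(V(T)-V(0))\mathbf{1}(V(0)\ge\tau)\bigr], \quad \frac{1}{n}\sum_{i=1}^n \mathbf{1}(V_{i-1}\ge\tau)\asarrow \P(V\ge\tau),
\]
where $V(0)\sim V$ and $T$ is an independent exponential random variable with rate $\xi$. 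Specializing $n=M(m,\tau)\to\infty$ and applying the continuous mapping theorem to the ratio (whose denominator limit is strictly positive) gives $\hat{\vartheta}_m(\tau)\parrow \xi\,\E[V(T)-V(0)\,|\,V(0)\ge\tau]$.

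To match this limit with \eqref{eq:bias_tau_asymp}, I use \eqref{eq:EV_transient}, which yields $\E[V(T)-V(0)\,|\,V(0)=v]=e^{-\psi(\xi)v}/\psi(\xi)-\varphi\apost (0)/\xi$. Integrating against the law of $V$ conditional on $V\ge\tau$ and multiplying by $\xi$ produces exactly $\vartheta(\tau)$. The positivity claim, naturally read as asserting strict positivity of the asymptotic bias $\vartheta(\tau)-\vartheta=\vartheta(\tau)+\varphi\apost (0)$ (in line with Proposition \ref{prop:tau_bias_bound}), follows immediately from the strict positivity of $\E[e^{-\psi(\xi)V}\mathbf{1}(V\ge\tau)]$.

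The main obstacle lies in rigorously invoking the ergodic theorem for the sampled Markov chain: one must assert Harris recurrence of $\{V_i\}$ (standard under $\E X(1)<0$ from L\'evy-queue stability theory) and verify the integrability of the pair-functions above. This integrability reduces, via the one-step Markov structure, to the finiteness of $\E|V(T)-V(0)|$ under the stationary joint law, which is guaranteed by \eqref{eq:EV_transient} together with $\E V<\infty$; the latter follows from the GPK formula \eqref{eq:GPK} under mild smoothness of $\varphi$ at the origin.
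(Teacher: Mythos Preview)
Your proof is correct and follows essentially the same route as the paper: rewrite $\hat{\vartheta}_m(\tau)$ as a ratio of empirical averages over the full sampled chain, pass to the limit in numerator and denominator separately, and then evaluate the resulting conditional expectation via \eqref{eq:EV_transient}. The only cosmetic difference is that the paper phrases the limit step through PASTA and Slutsky's lemma rather than the ergodic theorem for the Harris-recurrent sampled chain; your added remarks on integrability (which the paper leaves implicit) are a welcome refinement, though note that $\E V<\infty$ is not actually needed, since $\E[|V(T)-v|\mid V(0)=v]$ is bounded uniformly in $v$ under just $\E X(1)<0$.
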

An immediate result of Proposition \ref{prop:tau_asymp} is that from \eqref{eq:bias_tau_asymp} we can derive a bound on the asymptotic bias that does not depend on $\tau$.

\begin{corollary}
For any $\tau\geq 0$,
\begin{equation}
b\left(\hat{\vartheta};\tau\right):=\lim_{m\to\infty}b_m\left(\hat{\vartheta};\tau\right)=\frac{\xi\E\left[e^{-\psi(\xi)V}\mathbf{1}(V\geq \tau)\right]}{\psi(\xi)\P(V\ge\tau)} \leq \frac{\xi}{\psi(\xi)}\ .
\end{equation}
\end{corollary}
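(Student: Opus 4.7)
The plan is to derive both claims in the corollary directly from Proposition \ref{prop:tau_asymp} together with a bounded-convergence argument to legitimate passing from convergence in probability to convergence of expectations. The bound is then a trivial consequence of the monotonicity $e^{-\psi(\xi)v}\le 1$.

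First I would rewrite $b_m(\hat{\vartheta};\tau)=\E[\hat{\vartheta}_m(\tau)]-\vartheta = \E[\hat{\vartheta}_m(\tau)]+\varphi\apost (0)$, using that $\vartheta=-\varphi\apost (0)$. Proposition \ref{prop:tau_asymp} already provides $\hat{\vartheta}_m(\tau)\parrow \vartheta(\tau)=\frac{\xi\,\E[e^{-\psi(\xi)V}\mathbf{1}(V\ge\tau)]}{\psi(\xi)\P(V\ge\tau)}-\varphi\apost (0)$. To transfer this to convergence of expectations I would exploit the structure revealed by its proof: conditionally on $V_{i(j)-1}$, identity \eqref{eq:EV_transient} gives $\E[V_{i(j)}-V_{i(j)-1}\mid V_{i(j)-1}] = \psi(\xi)^{-1}e^{-\psi(\xi)V_{i(j)-1}}-\varphi\apost (0)/\xi$, so that
\begin{equation}
\E[\hat{\vartheta}_m(\tau)]=\xi\,\E\!\left[\frac{1}{m}\sum_{j=1}^m\left(\frac{e^{-\psi(\xi)V_{i(j)-1}}}{\psi(\xi)}-\frac{\varphi\apost (0)}{\xi}\right)\right].
\end{equation}
The summands are uniformly bounded by $1/\psi(\xi)+|\varphi\apost (0)|/\xi$, since $0\le e^{-\psi(\xi)V_{i(j)-1}}\le 1$. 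Dominated convergence applied to the ergodic/PASTA limit that underpins Proposition \ref{prop:tau_asymp} then yields
\begin{equation}
\lim_{m\to\infty}\E[\hat{\vartheta}_m(\tau)] = \xi\left(\frac{\E[e^{-\psi(\xi)V}\mathbf{1}(V\ge\tau)]}{\psi(\xi)\,\P(V\ge\tau)}-\frac{\varphi\apost (0)}{\xi}\right),
\end{equation}
and adding $\varphi\apost (0)$ produces the asserted closed form for $b(\hat{\vartheta};\tau)$.

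For the upper bound I would simply note that $\psi(\xi)>0$ and $V\ge 0$ imply $e^{-\psi(\xi)V}\le 1$, whence $\E[e^{-\psi(\xi)V}\mathbf{1}(V\ge\tau)]\le \P(V\ge\tau)$; substituting into the numerator gives $b(\hat{\vartheta};\tau)\le \xi/\psi(\xi)$.

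The only non-routine step is justifying the interchange of limit and expectation, and even that is unproblematic because the conditional-expectation representation above exposes a uniformly bounded integrand. Consequently this corollary is essentially a bookkeeping consequence of Proposition \ref{prop:tau_asymp}, rather than requiring genuinely new probabilistic input.
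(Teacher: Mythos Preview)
Your argument is correct and actually more careful than the paper's own treatment. The paper presents the corollary as an ``immediate result of Proposition~\ref{prop:tau_asymp}'' and gives no proof: it simply reads off $\vartheta(\tau)-\vartheta$ from \eqref{eq:bias_tau_asymp} and bounds the numerator by $\P(V\ge\tau)$. In doing so it tacitly identifies $\lim_{m\to\infty}\E[\hat{\vartheta}_m(\tau)]$ with the in-probability limit $\vartheta(\tau)$, a step that is not automatic. You make this rigorous by passing to the conditional-expectation representation via \eqref{eq:EV_transient}, which exposes a uniformly bounded average and legitimates the interchange of limit and expectation; this is the right way to close the gap, and the bound then follows exactly as you say.
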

As $\varphi(\alpha)$ is a convex and increasing function, the term ${\xi}/{\psi(\xi)}$ is an increasing function of the sampling rate $\xi$. In line with earlier observations, it shows that sampling slowly is to be preferred when there is no, or little, cost associated with the sampling duration.

\begin{remark}
A rough approximation for the expected sample size $\E M(m,\tau)$ required to collect $m$ observations above a threshold $\tau$ can be obtained by considering the corresponding stationary tail probability. If the input process is light tailed, then (see \cite[Ch. IV]{book_DM2015}),
\begin{equation}
\P(V>\tau)\approx e^{-\omega \tau}\ ,
\end{equation}
where $\E e^{\omega X(1)}=1$. If $M$ stationary observations are made then the expected number above $\tau$ is $M\cdot \P(V>\tau)$, leading to the approximation $\E M(m,\tau)\cdot \P(V>\tau)\approx m$. We thus obtain
\begin{equation}
\E M(m,\tau)\approx {m}\,{e^{\omega \tau}} \ .
\end{equation} 
\end{remark}

\section{Simulation analysis}\label{sec:simulation}

In order to numerically test the performance of the estimation procedure we carried out simulations of a queue with L\'evy input that consists of three processes: $X(t)=C(t)+W(t)+U(t)$, where
\begin{itemize}
\item[$\circ$] $C(t)$ is Compound Poisson with rate $\lambda$ and Gamma jumps-size distribution, $\Gamma(\eta,\mu)$,
\item[$\circ$] $W(t)$ is Brownian motion with drift $d$ and variance $\sigma^2$,
\item[$\circ$] $U(t)$ is a Gamma process (subordinator) with parameters $(\beta,\gamma)$.
\end{itemize}
The exponent function is then
\begin{equation}
\varphi(\alpha)=\lambda\left[\left(\frac{\mu}{\mu+\alpha}\right)^\eta-1\right]-\alpha d+\frac{1}{2}\alpha^2\sigma^2+\beta\log\left(\frac{\gamma}{\gamma+\alpha}\right) \ ,
\end{equation}
and the expected input per unit of time is
$
\E X(1)=-\varphi{'}(0)={\lambda\eta}/{\mu}+d+ {\beta}/{\gamma}.$ We first examine the performance of the Z-estimator for $\varphi$ that uses the MLE estimator for $\psi(\xi)$ in the M/M/$1$ case $(\lambda,\eta,\mu,d,\sigma^2,\beta,\gamma)=(0.8,1,1,-1,0,0,0)$. In Figure \ref{fig:phi_alpha_rho08_xi1} the real and estimated exponent functions are compared along with the confidence interval based on the asymptotic normal approximation. Even for a moderate sample size of $n=30$ the estimated function is quite close to the real one. Observe, however, that the estimate is less accurate as $\alpha$ increases. In Figure \ref{fig:phi_alpha_rho08_xi1_n} we illustrate the increase in accuracy when the sample size $n$ increases. Theorem \ref{thm:phi_consistency} established that the estimator is consistent and here we can see that the bias diminishes very quickly. Increasing the sample size is more important when one wishes to estimate $\varphi(\alpha)$ for high values of $\alpha$. 

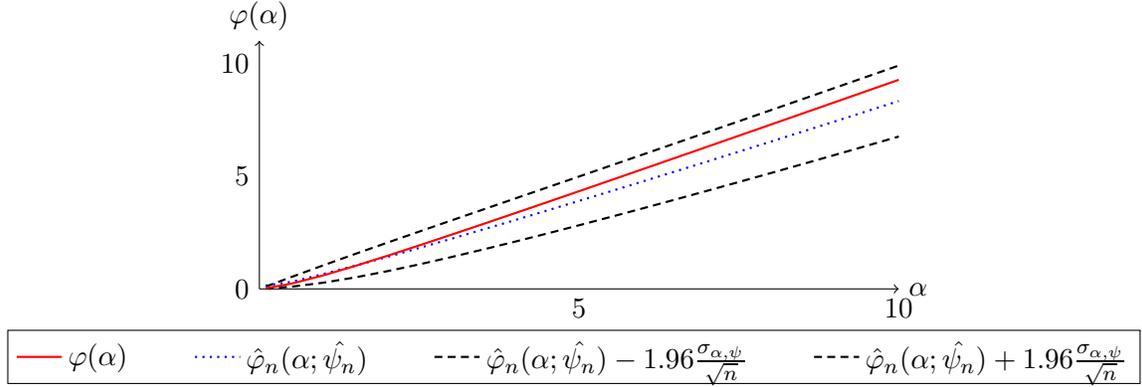
\begin{figure}[H]
\centering
\begin{tikzpicture}[xscale=0.85,yscale=0.3]
 \def\xmin{0}
 \def\xmax{10}
 \def\ymin{0}
 \def\ymax{11}
  \draw[->] (\xmin,\ymin) -- (\xmax,\ymin) node[right] {$\alpha$} ;
  \draw[->] (\xmin,\ymin) -- (\xmin,\ymax) node[above] {$\varphi(\alpha)$} ;
  \foreach \x in {5,10}
  \node at (\x,\ymin) [below] {\x};
  \foreach \y in {0,5,10}
  \node at (\xmin,\y) [left] {\y};

  \draw[densely dashed,thick, black] ( 0.1 , 0.01334 )-- ( 0.5 , 0.09848 )-- ( 1 , 0.27491 )-- ( 1.5 , 0.51023 )-- ( 2 , 0.78525 )-- ( 2.5 , 1.08781 )-- ( 3 , 1.41029 )-- ( 3.5 , 1.74781 )-- ( 4 , 2.09709 )-- ( 4.5 , 2.45589 )-- ( 5 , 2.8226 )-- ( 5.5 , 3.19603 )-- ( 6 , 3.5753 )-- ( 6.5 , 3.95969 )-- ( 7 , 4.34864 )-- ( 7.5 , 4.74169 )-- ( 8 , 5.13846 )-- ( 8.5 , 5.53861 )-- ( 9 , 5.94186 )-- ( 9.5 , 6.34797 )-- ( 10 , 6.75673 ) ;
  
   \draw[densely dashed,thick, black] ( 0.1 , 0.10398 )-- ( 0.5 , 0.52606 )-- ( 1 , 1.04564 )-- ( 1.5 , 1.55198 )-- ( 2 , 2.04985 )-- ( 2.5 , 2.54279 )-- ( 3 , 3.03293 )-- ( 3.5 , 3.52154 )-- ( 4 , 4.00945 )-- ( 4.5 , 4.49719 )-- ( 5 , 4.98513 )-- ( 5.5 , 5.47354 )-- ( 6 , 5.9626 )-- ( 6.5 , 6.45242 )-- ( 7 , 6.94308 )-- ( 7.5 , 7.43464 )-- ( 8 , 7.92711 )-- ( 8.5 , 8.42051 )-- ( 9 , 8.91482 )-- ( 9.5 , 9.41005 )-- ( 10 , 9.90617 );

  \draw[dotted,thick,blue] ( 0.1 , 0.15 )-- ( 0.5 , 0.31227 )-- ( 1 , 0.66028 )-- ( 1.5 , 1.0311 )-- ( 2 , 1.41755 )-- ( 2.5 , 1.8153 )-- ( 3 , 2.22161 )-- ( 3.5 , 2.63468 )-- ( 4 , 3.05327 )-- ( 4.5 , 3.47654 )-- ( 5 , 3.90387 )-- ( 5.5 , 4.33479 )-- ( 6 , 4.76895 )-- ( 6.5 , 5.20605 )-- ( 7 , 5.64586 )-- ( 7.5 , 6.08817 )-- ( 8 , 6.53278 )-- ( 8.5 , 6.97956 )-- ( 9 , 7.42834 )-- ( 9.5 , 7.87901 )-- ( 10 , 8.33145 );
  
   \draw[red,thick,smooth] ( 0.1 , 0.02727 )-- ( 0.5 , 0.23333 )-- ( 1 , 0.6 )-- ( 1.5 , 1.02 )-- ( 2 , 1.46667 )-- ( 2.5 , 1.92857 )-- ( 3 , 2.4 )-- ( 3.5 , 2.87778 )-- ( 4 , 3.36 )-- ( 4.5 , 3.84545 )-- ( 5 , 4.33333 )-- ( 5.5 , 4.82308 )-- ( 6 , 5.31429 )-- ( 6.5 , 5.80667 )-- ( 7 , 6.3 )-- ( 7.5 , 6.79412 )-- ( 8 , 7.28889 )-- ( 8.5 , 7.78421 )-- ( 9 , 8.28 )-- ( 9.5 , 8.77619 )-- ( 10 , 9.27273 );
   
\end{tikzpicture}

\begin{tikzpicture}
  \begin{customlegend}
  [legend entries={ $\varphi(\alpha)$,$\hat{\varphi}_n(\alpha;\hat{\psi_n})$,$\hat{\varphi}_n(\alpha;\hat{\psi_n})-1.96\frac{\sigma_{\alpha,\psi}}{\sqrt{n}}$,$\hat{\varphi}_n(\alpha;\hat{\psi_n})+1.96\frac{\sigma_{\alpha,\psi}}{\sqrt{n}}$},legend columns=-1,legend style={/tikz/every even column/.append style={column sep=0.8cm}}]  
  \addlegendimage{red,thick,smooth}
  \addlegendimage{blue,thick, dotted}  
  \addlegendimage{black,thick,densely dashed}   
  \addlegendimage{black,thick,densely dashed}  
  \end{customlegend}
\end{tikzpicture}
\caption{M/M/$1$ with $\lambda=0.8$ and $\mu=1$. The exponent function $\varphi(\alpha)$ (solid red line) and its Z-estimator $\hat{\varphi}_n(\alpha;\hat{\psi}_n)$ (dotted blue line) from a sample of $n=30$ observations with sampling rate $\xi=1$. The intermediate estimator $\hat{\psi}_n$ is the MLE of Section \ref{sec:MG1_MLE}. Also plotted are the upper and lower bounds of the confidence interval using the normal approximation and asymptotic variance.}\label{fig:phi_alpha_rho08_xi1}
\end{figure}

\begin{figure}[H]
\centering
\begin{subfigure}{0.9\linewidth}
\begin{tikzpicture}[xscale=0.5,yscale=0.7]
 \def\xmin{0}
 \def\xmax{20}
 \def\ymin{-2.5}
 \def\ymax{0.5}
  \draw[->] (\xmin,0) -- (\xmax,0) node[right] {$\alpha$} ;
  \draw[->] (\xmin,\ymin) -- (\xmin,\ymax) node[above] {$\hat{\varphi}_n(\alpha;\hat{\psi}_n)-\varphi(\alpha)$} ;
  \foreach \x in {10,20}
  \node at (\x,\ymin) [below] {\x};
  \foreach \y in {-2,-1,0}
  \node at (\xmin,\y) [left] {\y};
  
    \draw[red,thick,smooth] ( 0.1 , 0.01802 )-- ( 0.5 , 0.03888 )-- ( 1 , -0.00571 )-- ( 1.5 , -0.08084 )-- ( 2 , -0.16495 )-- ( 2.5 , -0.25051 )-- ( 3 , -0.33487 )-- ( 3.5 , -0.41714 )-- ( 4 , -0.49706 )-- ( 4.5 , -0.57465 )-- ( 5 , -0.65003 )-- ( 5.5 , -0.72334 )-- ( 6 , -0.79473 )-- ( 6.5 , -0.86435 )-- ( 7 , -0.93236 )-- ( 7.5 , -0.99889 )-- ( 8 , -1.06409 )-- ( 8.5 , -1.12807 )-- ( 9 , -1.19096 )-- ( 9.5 , -1.25287 )-- ( 10 , -1.31389 )-- ( 10.5 , -1.37412 )-- ( 11 , -1.43364 )-- ( 11.5 , -1.49252 )-- ( 12 , -1.55084 )-- ( 12.5 , -1.60864 )-- ( 13 , -1.66599 )-- ( 13.5 , -1.72293 )-- ( 14 , -1.77949 )-- ( 14.5 , -1.83572 )-- ( 15 , -1.89164 )-- ( 15.5 , -1.94727 )-- ( 16 , -2.00264 )-- ( 16.5 , -2.05777 )-- ( 17 , -2.11267 )-- ( 17.5 , -2.16735 )-- ( 18 , -2.22183 )-- ( 18.5 , -2.27611 )-- ( 19 , -2.33019 )-- ( 19.5 , -2.3841 )-- ( 20 , -2.43782 ) ;
    
    \draw[dotted,thick,blue] ( 0.1 , 0.01203 )-- ( 0.5 , 0.02606 )-- ( 1 , -0.01 )-- ( 1.5 , -0.07656 )-- ( 2 , -0.15432 )-- ( 2.5 , -0.23503 )-- ( 3 , -0.31539 )-- ( 3.5 , -0.39409 )-- ( 4 , -0.47065 )-- ( 4.5 , -0.54494 )-- ( 5 , -0.61694 )-- ( 5.5 , -0.68671 )-- ( 6 , -0.75431 )-- ( 6.5 , -0.81984 )-- ( 7 , -0.88337 )-- ( 7.5 , -0.945 )-- ( 8 , -1.00482 )-- ( 8.5 , -1.06291 )-- ( 9 , -1.11936 )-- ( 9.5 , -1.17427 )-- ( 10 , -1.22771 )-- ( 10.5 , -1.27978 )-- ( 11 , -1.33055 )-- ( 11.5 , -1.38009 )-- ( 12 , -1.42848 )-- ( 12.5 , -1.47579 )-- ( 13 , -1.52208 )-- ( 13.5 , -1.56741 )-- ( 14 , -1.61183 )-- ( 14.5 , -1.65541 )-- ( 15 , -1.69819 )-- ( 15.5 , -1.74022 )-- ( 16 , -1.78153 )-- ( 16.5 , -1.82218 )-- ( 17 , -1.8622 )-- ( 17.5 , -1.90163 )-- ( 18 , -1.94049 )-- ( 18.5 , -1.97883 )-- ( 19 , -2.01666 )-- ( 19.5 , -2.05402 )-- ( 20 , -2.09094 );
    
   \draw[purple,thick, dashed]  ( 0.1 , 0.01734 )-- ( 0.5 , 0.05521 )-- ( 1 , 0.0641 )-- ( 1.5 , 0.05018 )-- ( 2 , 0.02494 )-- ( 2.5 , -0.00558 )-- ( 3 , -0.03834 )-- ( 3.5 , -0.07182 )-- ( 4 , -0.10522 )-- ( 4.5 , -0.13814 )-- ( 5 , -0.17032 )-- ( 5.5 , -0.20165 )-- ( 6 , -0.23205 )-- ( 6.5 , -0.26147 )-- ( 7 , -0.28991 )-- ( 7.5 , -0.31739 )-- ( 8 , -0.34393 )-- ( 8.5 , -0.36958 )-- ( 9 , -0.39437 )-- ( 9.5 , -0.41836 )-- ( 10 , -0.4416 )-- ( 10.5 , -0.46414 )-- ( 11 , -0.48604 )-- ( 11.5 , -0.50734 )-- ( 12 , -0.52809 )-- ( 12.5 , -0.54833 )-- ( 13 , -0.56812 )-- ( 13.5 , -0.58748 )-- ( 14 , -0.60645 )-- ( 14.5 , -0.62508 )-- ( 15 , -0.64337 )-- ( 15.5 , -0.66138 )-- ( 16 , -0.67911 )-- ( 16.5 , -0.6966 )-- ( 17 , -0.71386 )-- ( 17.5 , -0.73092 )-- ( 18 , -0.7478 )-- ( 18.5 , -0.7645 )-- ( 19 , -0.78105 )-- ( 19.5 , -0.79747 )-- ( 20 , -0.81375 );
  
   \draw[green,thick,dashdotted] ( 0.1 , 0.00613 )-- ( 0.5 , 0.02719 )-- ( 1 , 0.03453 )-- ( 1.5 , 0.01886 )-- ( 2 , -0.00739 )-- ( 2.5 , -0.03695 )-- ( 3 , -0.06681 )-- ( 3.5 , -0.09588 )-- ( 4 , -0.12388 )-- ( 4.5 , -0.15078 )-- ( 5 , -0.17663 )-- ( 5.5 , -0.2015 )-- ( 6 , -0.22545 )-- ( 6.5 , -0.24853 )-- ( 7 , -0.2708 )-- ( 7.5 , -0.29228 )-- ( 8 , -0.31303 )-- ( 8.5 , -0.33309 )-- ( 9 , -0.35249 )-- ( 9.5 , -0.37128 )-- ( 10 , -0.38951 )-- ( 10.5 , -0.40721 )-- ( 11 , -0.42442 )-- ( 11.5 , -0.44119 )-- ( 12 , -0.45755 )-- ( 12.5 , -0.47355 )-- ( 13 , -0.4892 )-- ( 13.5 , -0.50456 )-- ( 14 , -0.51964 )-- ( 14.5 , -0.53449 )-- ( 15 , -0.54911 )-- ( 15.5 , -0.56355 )-- ( 16 , -0.57781 )-- ( 16.5 , -0.59194 )-- ( 17 , -0.60593 )-- ( 17.5 , -0.61982 )-- ( 18 , -0.63362 )-- ( 18.5 , -0.64734 )-- ( 19 , -0.66101 )-- ( 19.5 , -0.67462 )-- ( 20 , -0.6882 );
   
\end{tikzpicture}
\end{subfigure}

\begin{subfigure}{0.87\linewidth}
\begin{tikzpicture}[xscale=0.5,yscale=1.8]
 \def\xmin{0}
 \def\xmax{20}
 \def\ymin{-0.5}
 \def\ymax{1}
  \draw[->] (\xmin,0) -- (\xmax,0) node[right] {$\alpha$} ;
  \draw[->] (\xmin,\ymin) -- (\xmin,\ymax) node[above] {$\frac{\hat{\varphi}_n(\alpha;\hat{\psi}_n)-\varphi(\alpha)}{\varphi(\alpha)}$} ;
  \foreach \x in {10,20}
  \node at (\x,\ymin) [below] {\x};
  \foreach \y in {-0.5,0,0.5}
  \node at (\xmin,\y) [left] {\y};
  
    \draw[red,thick,smooth] ( 	0.1 , 	0.66080	)-- 	( 	0.5 , 	0.16663	)-- 	( 	1 , 	-0.00951666666666666	)-- 	( 	1.5 , 	-0.0792549019607843	)-- 	( 	2 , 	-0.11247	)-- 	( 	2.5 , 	-0.12989	)-- 	( 	3 , 	-0.139529166666667	)-- 	( 	3.5 , 	-0.14495	)-- 	( 	4 , 	-0.147934523809524	)-- 	( 	4.5 , 	-0.14944	)-- 	( 	5 , 	-0.15001	)-- 	( 	5.5 , 	-0.14997	)-- 	( 	6 , 	-0.14955	)-- 	( 	6.5 , 	-0.14885	)-- 	( 	7 , 	-0.147993650793651	)-- 	( 	7.5 , 	-0.14702	)-- 	( 	8 , 	-0.14599	)-- 	( 	8.5 , 	-0.14492	)-- 	( 	9 , 	-0.143835748792271	)-- 	( 	9.5 , 	-0.14276	)-- 	( 	10 , 	-0.14169	)-- 	( 	10.5 , 	-0.14065	)-- 	( 	11 , 	-0.13964	)-- 	( 	11.5 , 	-0.138658491267187	)-- 	( 	12 , 	-0.13771	)-- 	( 	12.5 , 	-0.13680	)-- 	( 	13 , 	-0.13592	)-- 	( 	13.5 , 	-0.13508	)-- 	( 	14 , 	-0.13427	)-- 	( 	14.5 , 	-0.13349	)-- 	( 	15 , 	-0.132746666666667	)-- 	( 	15.5 , 	-0.13203	)-- 	( 	16 , 	-0.13135	)-- 	( 	16.5 , 	-0.13069	)-- 	( 	17 , 	-0.13005	)-- 	( 	17.5 , 	-0.12945	)-- 	( 	18 , 	-0.12886	)-- 	( 	18.5 , 	-0.12830	)-- 	( 	19 , 	-0.127751644736842	)-- 	( 	19.5 , 	-0.127226503840649	)-- 	( 	20 , 	-0.126718334970709	);
    
    \draw[dotted,thick,blue] ( 	0.1 , 	0.44114	)-- 	( 	0.5 , 	0.11169	)-- 	( 	1 , 	-0.0166666666666667	)-- 	( 	1.5 , 	-0.0750588235294117	)-- 	( 	2 , 	-0.10522	)-- 	( 	2.5 , 	-0.12187	)-- 	( 	3 , 	-0.1314125	)-- 	( 	3.5 , 	-0.13694	)-- 	( 	4 , 	-0.140074404761905	)-- 	( 	4.5 , 	-0.14171	)-- 	( 	5 , 	-0.14237	)-- 	( 	5.5 , 	-0.14238	)-- 	( 	6 , 	-0.14194	)-- 	( 	6.5 , 	-0.14119	)-- 	( 	7 , 	-0.14021746031746	)-- 	( 	7.5 , 	-0.139090860920914	)-- 	( 	8 , 	-0.13786	)-- 	( 	8.5 , 	-0.13655	)-- 	( 	9 , 	-0.135188405797102	)-- 	( 	9.5 , 	-0.13380	)-- 	( 	10 , 	-0.13240	)-- 	( 	10.5 , 	-0.13100	)-- 	( 	11 , 	-0.12960	)-- 	( 	11.5 , 	-0.128213489409142	)-- 	( 	12 , 	-0.12685	)-- 	( 	12.5 , 	-0.12550	)-- 	( 	13 , 	-0.12418	)-- 	( 	13.5 , 	-0.12288	)-- 	( 	14 , 	-0.12162	)-- 	( 	14.5 , 	-0.12038	)-- 	( 	15 , 	-0.119171228070175	)-- 	( 	15.5 , 	-0.11799	)-- 	( 	16 , 	-0.11684	)-- 	( 	16.5 , 	-0.11573	)-- 	( 	17 , 	-0.114636146275279	)-- 	( 	17.5 , 	-0.11358	)-- 	( 	18 , 	-0.11254	)-- 	( 	18.5 , 	-0.11154	)-- 	( 	19 , 	-0.1105625	)-- 	( 	19.5 , 	-0.10961	)-- 	( 	20 , 	-0.108687448344691	);
    
   \draw[purple,thick, dashed] ( 	0.1 , 	0.22479	)-- 	( 	0.5 , 	0.11653	)-- 	( 	1 , 	0.0575499999999999	)-- 	( 	1.5 , 	0.0184901960784315	)-- 	( 	2 , 	-0.00504	)-- 	( 	2.5 , 	-0.01916	)-- 	( 	3 , 	-0.0278374999999999	)-- 	( 	3.5 , 	-0.03332	)-- 	( 	4 , 	-0.0368690476190477	)-- 	( 	4.5 , 	-0.03921	)-- 	( 	5 , 	-0.04076	)-- 	( 	5.5 , 	-0.0417782827570764	)-- 	( 	6 , 	-0.04242	)-- 	( 	6.5 , 	-0.04280	)-- 	( 	7 , 	-0.042984126984127	)-- 	( 	7.5 , 	-0.04302	)-- 	( 	8 , 	-0.04295	)-- 	( 	8.5 , 	-0.04279	)-- 	( 	9 , 	-0.0425712560386474	)-- 	( 	9.5 , 	-0.04231	)-- 	( 	10 , 	-0.04201	)-- 	( 	10.5 , 	-0.04168	)-- 	( 	11 , 	-0.04134	)-- 	( 	11.5 , 	-0.0409875510962468	)-- 	( 	12 , 	-0.04063	)-- 	( 	12.5 , 	-0.04027	)-- 	( 	13 , 	-0.0399114312147859	)-- 	( 	13.5 , 	-0.03956	)-- 	( 	14 , 	-0.03921	)-- 	( 	14.5 , 	-0.03887	)-- 	( 	15 , 	-0.0385340350877193	)-- 	( 	15.5 , 	-0.03821	)-- 	( 	16 , 	-0.03790	)-- 	( 	16.5 , 	-0.03759	)-- 	( 	17 , 	-0.03730	)-- 	( 	17.5 , 	-0.03702	)-- 	( 	18 , 	-0.03675	)-- 	( 	18.5 , 	-0.03649	)-- 	( 	19 , 	-0.0362395833333334	)-- 	( 	19.5 , 	-0.03600	)-- 	( 	20 , 	-0.0358	);

   \draw[green,thick,dashdotted] ( 	0.1 , 	0.22479	)-- 	( 	0.5 , 	0.11653	)-- 	( 	1 , 	0.0575499999999999	)-- 	( 	1.5 , 	0.0184901960784315	)-- 	( 	2 , 	-0.00504	)-- 	( 	2.5 , 	-0.01916	)-- 	( 	3 , 	-0.0278374999999999	)-- 	( 	3.5 , 	-0.03332	)-- 	( 	4 , 	-0.0368690476190477	)-- 	( 	4.5 , 	-0.03921	)-- 	( 	5 , 	-0.04076	)-- 	( 	5.5 , 	-0.0417782827570764	)-- 	( 	6 , 	-0.04242	)-- 	( 	6.5 , 	-0.04280	)-- 	( 	7 , 	-0.042984126984127	)-- 	( 	7.5 , 	-0.04302	)-- 	( 	8 , 	-0.04295	)-- 	( 	8.5 , 	-0.04279	)-- 	( 	9 , 	-0.0425712560386474	)-- 	( 	9.5 , 	-0.04231	)-- 	( 	10 , 	-0.04201	)-- 	( 	10.5 , 	-0.04168	)-- 	( 	11 , 	-0.04134	)-- 	( 	11.5 , 	-0.0409875510962468	)-- 	( 	12 , 	-0.04063	)-- 	( 	12.5 , 	-0.04027	)-- 	( 	13 , 	-0.0399114312147859	)-- 	( 	13.5 , 	-0.03956	)-- 	( 	14 , 	-0.03921	)-- 	( 	14.5 , 	-0.03887	)-- 	( 	15 , 	-0.0385340350877193	)-- 	( 	15.5 , 	-0.03821	)-- 	( 	16 , 	-0.03790	)-- 	( 	16.5 , 	-0.03759	)-- 	( 	17 , 	-0.03730	)-- 	( 	17.5 , 	-0.03702	)-- 	( 	18 , 	-0.03675	)-- 	( 	18.5 , 	-0.03649	)-- 	( 	19 , 	-0.0362395833333334	)-- 	( 	19.5 , 	-0.03600	)-- 	( 	20 , 	-0.0358	) ; 
   
\end{tikzpicture}
\end{subfigure}

\begin{tikzpicture}
  \begin{customlegend}
  [legend entries={$n=30$,$n=50$,$n=100$,$n=200$},legend columns=-1,legend style={/tikz/every even column/.append style={column sep=0.8cm}}]  
  \addlegendimage{red,thick,smooth}
  \addlegendimage{blue,thick, dotted}  
  \addlegendimage{purple,thick, dashed} 
  \addlegendimage{green,thick,dashdotted}  
  \end{customlegend}
\end{tikzpicture}
\caption{M/M/$1$ with $\lambda=0.8$ and $\mu=1$. The relative estimation error of the exponent function, $(\hat{\varphi}_n(\alpha;\hat{\psi}_n)-\varphi(\alpha))/\varphi(\alpha)$, based on sample sizes $n\in\{30,50,100,200\}$ with sampling rate $\xi=1$. The intermediate estimator $\hat{\psi}_n$ is the MLE of Section \ref{sec:MG1_MLE}. Common random numbers were used to simulate the full sample of $n=200$ and the other estimators use the first $n$ observations.}\label{fig:phi_alpha_rho08_xi1_n}
\end{figure}
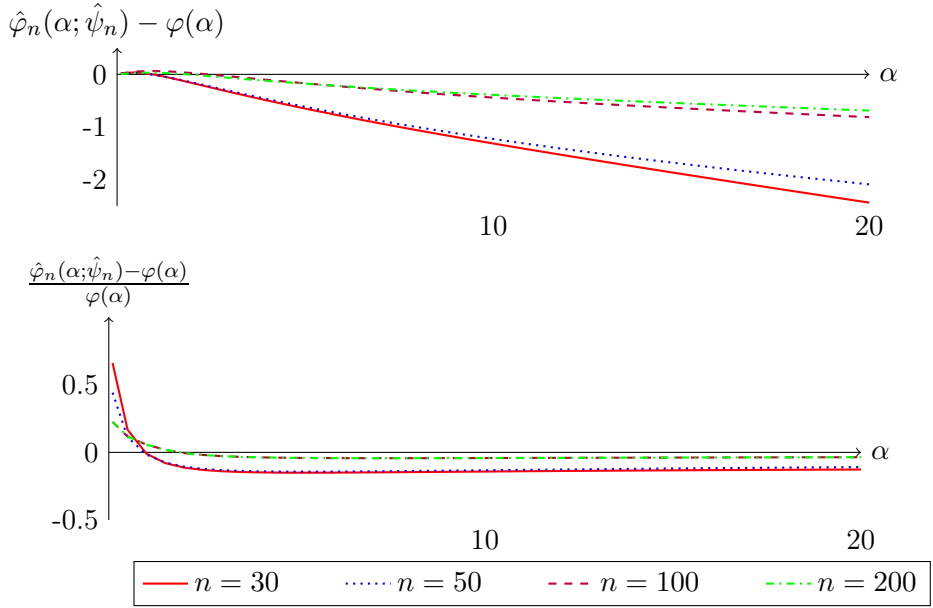

In Figure \ref{fig:sigma_alpha_xi} the asymptotic variance $\sigma_{\alpha,\xi}^2$ is plotted as a function of $\alpha$ for different sampling rates. These functions are all increasing and appear to be convex and unbounded. The asymptotic correlation, \begin{equation}r(\alpha,1):=\frac{\sigma_{\alpha,1}^2}{\sqrt{\sigma_{\alpha,\xi}^2 \sigma_{1,\xi}^2}}\end{equation} (see Theorem \ref{thm:phiZ_asymp_norm_multi}), between the estimated function $\hat{\varphi}_n(\alpha;\hat{\psi}_n)$ and $\hat{\varphi}_n(1;\hat{\psi}_n)$ is illustrated in Figure \ref{fig:corr_alpha_xi}. The correlation is a decreasing function of the distance from one, but interestingly does not disappear to zero but to a lower bound (about $0.2$ in this example). This should come as no surprise after some thought, as the function is estimated using the same sample of observations for all $\alpha>0$.

\begin{figure}[H]
\centering
\begin{tikzpicture}[xscale=0.2,yscale=0.015]
 \def\xmin{0}
 \def\xmax{50}
 \def\ymin{0}
 \def\ymax{200}
  \draw[->] (\xmin,\ymin) -- (\xmax,\ymin) node[right] {$\alpha$} ;
  \draw[->] (\xmin,\ymin) -- (\xmin,\ymax) node[above] {$\sigma_{\alpha,\xi}^2$} ;
  \foreach \x in {10,20,30,40,50}
  \node at (\x,\ymin) [below] {\x};
  \foreach \y in {0,50,100,150,200}
  \node at (\xmin,\y) [left] {\y};
  
   \draw[densely dashed,thick, black] ( 0.1 , 0.01604 )-- ( 0.5 , 0.35706 )-- ( 1 , 1.16124 )-- ( 1.5 , 2.12448 )-- ( 2 , 3.13597 )-- ( 2.5 , 4.15946 )-- ( 3 , 5.18443 )-- ( 3.5 , 6.20924 )-- ( 4 , 7.23522 )-- ( 4.5 , 8.2645 )-- ( 5 , 9.29926 )-- ( 5.5 , 10.34145 )-- ( 6 , 11.39273 )-- ( 6.5 , 12.45452 )-- ( 7 , 13.52797 )-- ( 7.5 , 14.61406 )-- ( 8 , 15.71358 )-- ( 8.5 , 16.82721 )-- ( 9 , 17.95549 )-- ( 9.5 , 19.0989 )-- ( 10 , 20.25784 )-- ( 10.5 , 21.43264 )-- ( 11 , 22.62358 )-- ( 11.5 , 23.83091 )-- ( 12 , 25.05484 )-- ( 12.5 , 26.29554 )-- ( 13 , 27.55319 )-- ( 13.5 , 28.8279 )-- ( 14 , 30.11981 )-- ( 14.5 , 31.42901 )-- ( 15 , 32.75559 )-- ( 15.5 , 34.09965 )-- ( 16 , 35.46124 )-- ( 16.5 , 36.84044 )-- ( 17 , 38.23729 )-- ( 17.5 , 39.65185 )-- ( 18 , 41.08417 )-- ( 18.5 , 42.53428 )-- ( 19 , 44.00221 )-- ( 19.5 , 45.48801 )-- ( 20 , 46.99171 )-- ( 20.5 , 48.51332 )-- ( 21 , 50.05287 )-- ( 21.5 , 51.61038 )-- ( 22 , 53.18588 )-- ( 22.5 , 54.77938 )-- ( 23 , 56.3909 )-- ( 23.5 , 58.02045 )-- ( 24 , 59.66805 )-- ( 24.5 , 61.33371 )-- ( 25 , 63.01744 )-- ( 25.5 , 64.71925 )-- ( 26 , 66.43915 )-- ( 26.5 , 68.17715 )-- ( 27 , 69.93326 )-- ( 27.5 , 71.70748 )-- ( 28 , 73.49983 )-- ( 28.5 , 75.31031 )-- ( 29 , 77.13892 )-- ( 29.5 , 78.98568 )-- ( 30 , 80.85058 )-- ( 30.5 , 82.73363 )-- ( 31 , 84.63484 )-- ( 31.5 , 86.55421 )-- ( 32 , 88.49174 )-- ( 32.5 , 90.44744 )-- ( 33 , 92.42131 )-- ( 33.5 , 94.41336 )-- ( 34 , 96.42358 )-- ( 34.5 , 98.45198 )-- ( 35 , 100.49857 )-- ( 35.5 , 102.56334 )-- ( 36 , 104.6463 )-- ( 36.5 , 106.74744 )-- ( 37 , 108.86678 )-- ( 37.5 , 111.00431 )-- ( 38 , 113.16004 )-- ( 38.5 , 115.33397 )-- ( 39 , 117.52609 )-- ( 39.5 , 119.73641 )-- ( 40 , 121.96494 )-- ( 40.5 , 124.21166 )-- ( 41 , 126.4766 )-- ( 41.5 , 128.75973 )-- ( 42 , 131.06107 )-- ( 42.5 , 133.38063 )-- ( 43 , 135.71838 )-- ( 43.5 , 138.07435 )-- ( 44 , 140.44853 )-- ( 44.5 , 142.84092 )-- ( 45 , 145.25152 )-- ( 45.5 , 147.68034 )-- ( 46 , 150.12737 )-- ( 46.5 , 152.59261 )-- ( 47 , 155.07607 )-- ( 47.5 , 157.57774 )-- ( 48 , 160.09763 )-- ( 48.5 , 162.63573 )-- ( 49 , 165.19206 )-- ( 49.5 , 167.7666 )-- ( 50 , 170.35936 );

  \draw[dotted,thick,blue] ( 0.1 , 0.01009 )-- ( 0.5 , 0.15326 )-- ( 1 , 0.40857 )-- ( 1.5 , 0.67992 )-- ( 2 , 0.95228 )-- ( 2.5 , 1.22438 )-- ( 3 , 1.49783 )-- ( 3.5 , 1.77455 )-- ( 4 , 2.05618 )-- ( 4.5 , 2.34404 )-- ( 5 , 2.63913 )-- ( 5.5 , 2.94223 )-- ( 6 , 3.25393 )-- ( 6.5 , 3.57469 )-- ( 7 , 3.90486 )-- ( 7.5 , 4.24474 )-- ( 8 , 4.59455 )-- ( 8.5 , 4.95446 )-- ( 9 , 5.32462 )-- ( 9.5 , 5.70514 )-- ( 10 , 6.09613 )-- ( 10.5 , 6.49765 )-- ( 11 , 6.90978 )-- ( 11.5 , 7.33257 )-- ( 12 , 7.76606 )-- ( 12.5 , 8.21029 )-- ( 13 , 8.66529 )-- ( 13.5 , 9.13109 )-- ( 14 , 9.60772 )-- ( 14.5 , 10.09518 )-- ( 15 , 10.5935 )-- ( 15.5 , 11.1027 )-- ( 16 , 11.62277 )-- ( 16.5 , 12.15374 )-- ( 17 , 12.69562 )-- ( 17.5 , 13.2484 )-- ( 18 , 13.81211 )-- ( 18.5 , 14.38673 )-- ( 19 , 14.97229 )-- ( 19.5 , 15.56878 )-- ( 20 , 16.1762 )-- ( 20.5 , 16.79457 )-- ( 21 , 17.42388 )-- ( 21.5 , 18.06414 )-- ( 22 , 18.71534 )-- ( 22.5 , 19.3775 )-- ( 23 , 20.0506 )-- ( 23.5 , 20.73466 )-- ( 24 , 21.42968 )-- ( 24.5 , 22.13565 )-- ( 25 , 22.85257 )-- ( 25.5 , 23.58046 )-- ( 26 , 24.3193 )-- ( 26.5 , 25.06911 )-- ( 27 , 25.82987 )-- ( 27.5 , 26.60159 )-- ( 28 , 27.38427 )-- ( 28.5 , 28.17792 )-- ( 29 , 28.98252 )-- ( 29.5 , 29.79809 )-- ( 30 , 30.62462 )-- ( 30.5 , 31.46211 )-- ( 31 , 32.31056 )-- ( 31.5 , 33.16998 )-- ( 32 , 34.04035 )-- ( 32.5 , 34.92169 )-- ( 33 , 35.81399 )-- ( 33.5 , 36.71726 )-- ( 34 , 37.63148 )-- ( 34.5 , 38.55667 )-- ( 35 , 39.49282 )-- ( 35.5 , 40.43993 )-- ( 36 , 41.39801 )-- ( 36.5 , 42.36705 )-- ( 37 , 43.34705 )-- ( 37.5 , 44.33801 )-- ( 38 , 45.33993 )-- ( 38.5 , 46.35281 )-- ( 39 , 47.37666 )-- ( 39.5 , 48.41147 )-- ( 40 , 49.45724 )-- ( 40.5 , 50.51397 )-- ( 41 , 51.58166 )-- ( 41.5 , 52.66032 )-- ( 42 , 53.74993 )-- ( 42.5 , 54.85051 )-- ( 43 , 55.96205 )-- ( 43.5 , 57.08455 )-- ( 44 , 58.21801 )-- ( 44.5 , 59.36243 )-- ( 45 , 60.51781 )-- ( 45.5 , 61.68415 )-- ( 46 , 62.86145 )-- ( 46.5 , 64.04972 )-- ( 47 , 65.24894 )-- ( 47.5 , 66.45912 )-- ( 48 , 67.68027 )-- ( 48.5 , 68.91237 )-- ( 49 , 70.15543 )-- ( 49.5 , 71.40946 )-- ( 50 , 72.67444 );
  
   \draw[red,thick,smooth] ( 0.1 , 0.00425 )-- ( 0.5 , 0.05309 )-- ( 1 , 0.1233 )-- ( 1.5 , 0.1908 )-- ( 2 , 0.25798 )-- ( 2.5 , 0.32784 )-- ( 3 , 0.40259 )-- ( 3.5 , 0.48372 )-- ( 4 , 0.57224 )-- ( 4.5 , 0.66888 )-- ( 5 , 0.77414 )-- ( 5.5 , 0.88839 )-- ( 6 , 1.01191 )-- ( 6.5 , 1.14488 )-- ( 7 , 1.28748 )-- ( 7.5 , 1.43981 )-- ( 8 , 1.60197 )-- ( 8.5 , 1.77403 )-- ( 9 , 1.95605 )-- ( 9.5 , 2.14808 )-- ( 10 , 2.35016 )-- ( 10.5 , 2.5623 )-- ( 11 , 2.78455 )-- ( 11.5 , 3.01693 )-- ( 12 , 3.25943 )-- ( 12.5 , 3.51209 )-- ( 13 , 3.77492 )-- ( 13.5 , 4.04791 )-- ( 14 , 4.33109 )-- ( 14.5 , 4.62446 )-- ( 15 , 4.92802 )-- ( 15.5 , 5.24178 )-- ( 16 , 5.56574 )-- ( 16.5 , 5.89991 )-- ( 17 , 6.24429 )-- ( 17.5 , 6.59889 )-- ( 18 , 6.9637 )-- ( 18.5 , 7.33872 )-- ( 19 , 7.72396 )-- ( 19.5 , 8.11943 )-- ( 20 , 8.52511 )-- ( 20.5 , 8.94101 )-- ( 21 , 9.36714 )-- ( 21.5 , 9.80349 )-- ( 22 , 10.25006 )-- ( 22.5 , 10.70686 )-- ( 23 , 11.17388 )-- ( 23.5 , 11.65113 )-- ( 24 , 12.1386 )-- ( 24.5 , 12.6363 )-- ( 25 , 13.14422 )-- ( 25.5 , 13.66237 )-- ( 26 , 14.19074 )-- ( 26.5 , 14.72933 )-- ( 27 , 15.27816 )-- ( 27.5 , 15.83721 )-- ( 28 , 16.40648 )-- ( 28.5 , 16.98598 )-- ( 29 , 17.5757 )-- ( 29.5 , 18.17565 )-- ( 30 , 18.78582 )-- ( 30.5 , 19.40622 )-- ( 31 , 20.03685 )-- ( 31.5 , 20.6777 )-- ( 32 , 21.32877 )-- ( 32.5 , 21.99007 )-- ( 33 , 22.66159 )-- ( 33.5 , 23.34333 )-- ( 34 , 24.0353 )-- ( 34.5 , 24.7375 )-- ( 35 , 25.44991 )-- ( 35.5 , 26.17255 )-- ( 36 , 26.90542 )-- ( 36.5 , 27.64851 )-- ( 37 , 28.40182 )-- ( 37.5 , 29.16535 )-- ( 38 , 29.93911 )-- ( 38.5 , 30.72309 )-- ( 39 , 31.51729 )-- ( 39.5 , 32.32172 )-- ( 40 , 33.13637 )-- ( 40.5 , 33.96124 )-- ( 41 , 34.79633 )-- ( 41.5 , 35.64164 )-- ( 42 , 36.49718 )-- ( 42.5 , 37.36294 )-- ( 43 , 38.23892 )-- ( 43.5 , 39.12512 )-- ( 44 , 40.02154 )-- ( 44.5 , 40.92819 )-- ( 45 , 41.84506 )-- ( 45.5 , 42.77214 )-- ( 46 , 43.70945 )-- ( 46.5 , 44.65698 )-- ( 47 , 45.61473 )-- ( 47.5 , 46.58271 )-- ( 48 , 47.5609 )-- ( 48.5 , 48.54931 )-- ( 49 , 49.54795 )-- ( 49.5 , 50.5568 )-- ( 50 , 51.57587 ) ;
   
\end{tikzpicture}

\begin{tikzpicture}
  \begin{customlegend}
  [legend entries={ $\xi=0.1$,$\xi=1$,$\xi=5$},legend columns=-1,legend style={/tikz/every even column/.append style={column sep=0.8cm}}]  
  \addlegendimage{red,thick,smooth}
  \addlegendimage{blue,thick, dotted}  
  \addlegendimage{black,thick,densely dashed}   
  \end{customlegend}
\end{tikzpicture}
\caption{M/M/$1$ with $\lambda=0.8$ and $\mu=1$. The asymptotic variance $\sigma_{\alpha,\xi}^2$ as a function of $\alpha$ for different sampling rates.}\label{fig:sigma_alpha_xi}
\end{figure}
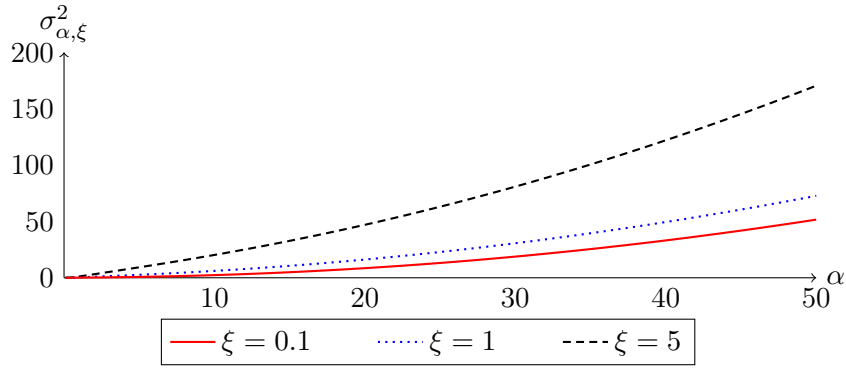

\begin{figure}[H]
\centering
\begin{tikzpicture}[xscale=0.2,yscale=4]
 \def\xmin{0}
 \def\xmax{50}
 \def\ymin{0}
 \def\ymax{1.1}
  \draw[->] (\xmin,\ymin) -- (\xmax,\ymin) node[right] {$\alpha$} ;
  \draw[->] (\xmin,\ymin) -- (\xmin,\ymax) node[above] {$r(\alpha,1)$} ;
  \foreach \x in {10,20,30,40,50}
  \node at (\x,\ymin) [below] {\x};
  \foreach \y in {0,0.2,0.4,0.6,0.8,1}
  \node at (\xmin,\y) [left] {\y};
  
   \draw[red,thick,smooth] ( 0.1 , 0.84704 )-- ( 0.5 , 0.97323 )-- ( 1 , 1 )-- ( 1.5 , 0.9879 )-- ( 2 , 0.96306 )-- ( 2.5 , 0.93372 )-- ( 3 , 0.90317 )-- ( 3.5 , 0.87286 )-- ( 4 , 0.84344 )-- ( 4.5 , 0.81524 )-- ( 5 , 0.78839 )-- ( 5.5 , 0.7629 )-- ( 6 , 0.73876 )-- ( 6.5 , 0.7159 )-- ( 7 , 0.69428 )-- ( 7.5 , 0.67381 )-- ( 8 , 0.65442 )-- ( 8.5 , 0.63605 )-- ( 9 , 0.61863 )-- ( 9.5 , 0.60209 )-- ( 10 , 0.58637 )-- ( 10.5 , 0.57142 )-- ( 11 , 0.55718 )-- ( 11.5 , 0.54362 )-- ( 12 , 0.53068 )-- ( 12.5 , 0.51832 )-- ( 13 , 0.50651 )-- ( 13.5 , 0.49522 )-- ( 14 , 0.4844 )-- ( 14.5 , 0.47404 )-- ( 15 , 0.4641 )-- ( 15.5 , 0.45457 )-- ( 16 , 0.4454 )-- ( 16.5 , 0.4366 )-- ( 17 , 0.42813 )-- ( 17.5 , 0.41997 )-- ( 18 , 0.41212 )-- ( 18.5 , 0.40454 )-- ( 19 , 0.39724 )-- ( 19.5 , 0.39019 )-- ( 20 , 0.38338 )-- ( 20.5 , 0.3768 )-- ( 21 , 0.37044 )-- ( 21.5 , 0.36429 )-- ( 22 , 0.35833 )-- ( 22.5 , 0.35257 )-- ( 23 , 0.34698 )-- ( 23.5 , 0.34156 )-- ( 24 , 0.33631 )-- ( 24.5 , 0.33121 )-- ( 25 , 0.32626 )-- ( 25.5 , 0.32146 )-- ( 26 , 0.31679 )-- ( 26.5 , 0.31226 )-- ( 27 , 0.30784 )-- ( 27.5 , 0.30355 )-- ( 28 , 0.29938 )-- ( 28.5 , 0.29531 )-- ( 29 , 0.29136 )-- ( 29.5 , 0.2875 )-- ( 30 , 0.28375 )-- ( 30.5 , 0.28008 )-- ( 31 , 0.27651 )-- ( 31.5 , 0.27303 )-- ( 32 , 0.26963 )-- ( 32.5 , 0.26632 )-- ( 33 , 0.26308 )-- ( 33.5 , 0.25992 )-- ( 34 , 0.25683 )-- ( 34.5 , 0.25382 )-- ( 35 , 0.25087 )-- ( 35.5 , 0.24799 )-- ( 36 , 0.24517 )-- ( 36.5 , 0.24241 )-- ( 37 , 0.23971 )-- ( 37.5 , 0.23707 )-- ( 38 , 0.23449 )-- ( 38.5 , 0.23196 )-- ( 39 , 0.22949 )-- ( 39.5 , 0.22706 )-- ( 40 , 0.22468 )-- ( 40.5 , 0.22236 )-- ( 41 , 0.22007 )-- ( 41.5 , 0.21784 )-- ( 42 , 0.21564 )-- ( 42.5 , 0.21349 )-- ( 43 , 0.21138 )-- ( 43.5 , 0.20932 )-- ( 44 , 0.20728 )-- ( 44.5 , 0.20529 )-- ( 45 , 0.20334 )-- ( 45.5 , 0.20142 )-- ( 46 , 0.19953 )-- ( 46.5 , 0.19768 )-- ( 47 , 0.19586 )-- ( 47.5 , 0.19408 )-- ( 48 , 0.19232 )-- ( 48.5 , 0.1906 )-- ( 49 , 0.1889 )-- ( 49.5 , 0.18724 )-- ( 50 , 0.1856 ) ;
   
\end{tikzpicture}
\caption{M/M/$1$ with $\lambda=0.8$ and $\mu=1$. The asymptotic correlation $r(\alpha,1)$, between the estimators $\hat{\varphi}_n(\alpha;\hat{\psi}_n)$ and $\hat{\varphi}_n(1;\hat{\psi}_n)$. The sampling rate is $\xi=1$.}\label{fig:corr_alpha_xi}
\end{figure}
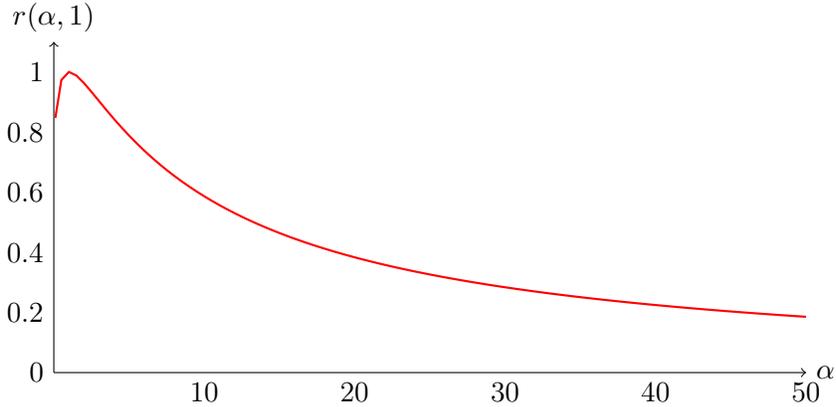

We used the two-step estimation method of $\psi(\xi)$ introduced in Section \ref{sec:levy} to test the performance of the estimator $\hat{\varphi}_n$ for L\'evy input. The data was simulated for an input process with parameter vector $(\lambda,\eta,\mu,d,\sigma,\beta,\gamma)=(0.2,1.2,0.5,-1,0.1,1,5)$ and initial workload $V(0)=0$. In Figure \ref{fig:phi_levy} the estimator is based on $m=200$ observations with a threshold $\tau=2$ for different sampling rates. As before, accuracy improves with a slow sampling rate and decreases as $\alpha$ grows. For a fixed sampling rate of $\xi=1$, Figure \ref{fig:phi_levy2} illustrates the estimated functions for different threshold levels for the biased intermediate estimation step. Both low and high thresholds give less accurate estimators than the intermediate level, $\tau=2$ in this example. This is because a low threshold has a large bias, whereas a high threshold has a small bias but a smaller sample size and thus a larger variance. 

\begin{figure}
\centering
\begin{tikzpicture}[xscale=1,yscale=0.9]
 \def\xmin{0}
 \def\xmax{10}
 \def\ymin{-2.5}
 \def\ymax{1.5}
  \draw[->] (\xmin,0) -- (\xmax,0) node[right] {$\alpha$} ;
  \draw[->] (\xmin,\ymin) -- (\xmin,\ymax) node[above] {$\hat{\varphi}_n(\alpha;\tilde{\psi}_n)-\varphi(\alpha)$} ;
  \foreach \x in {5,10}
  \node at (\x,\ymin) [below] {\x};
  \foreach \y in {-2,-1,0,1}
  \node at (\xmin,\y) [left] {\y};
 
  \draw[red,thick,smooth] ( 	0.1 , 	0.000949999999999999	)-- 	( 	0.5 , 	0.01767	)-- 	( 	1 , 	0.03726	)-- 	( 	1.5 , 	0.05640	)-- 	( 	2 , 	0.07786	)-- 	( 	2.5 , 	0.10092	)-- 	( 	3 , 	0.12431	)-- 	( 	3.5 , 	0.14697	)-- 	( 	4 , 	0.16812	)-- 	( 	4.5 , 	0.18717	)-- 	( 	5 , 	0.20383	)-- 	( 	5.5 , 	0.21786	)-- 	( 	6 , 	0.22918	)-- 	( 	6.5 , 	0.23774	)-- 	( 	7 , 	0.24357	)-- 	( 	7.5 , 	0.24671	)-- 	( 	8 , 	0.24722	)-- 	( 	8.5 , 	0.24515	)-- 	( 	9 , 	0.24060	)-- 	( 	9.5 , 	0.23360	)-- 	( 	10 , 	0.22423	);
  
  \draw[blue,thick, dotted] ( 	0.1 , 	0.01728	)-- 	( 	0.5 , 	0.07348	)-- 	( 	1 , 	0.14877	)-- 	( 	1.5 , 	0.22741	)-- 	( 	2 , 	0.30554	)-- 	( 	2.5 , 	0.38244	)-- 	( 	3 , 	0.45836	)-- 	( 	3.5 , 	0.53374	)-- 	( 	4 , 	0.60894	)-- 	( 	4.5 , 	0.68421	)-- 	( 	5 , 	0.75974	)-- 	( 	5.5 , 	0.83565	)-- 	( 	6 , 	0.91203	)-- 	( 	6.5 , 	0.98892	)-- 	( 	7 , 	1.06637	)-- 	( 	7.5 , 	1.14439	)-- 	( 	8 , 	1.22299	)-- 	( 	8.5 , 	1.30215	)-- 	( 	9 , 	1.38189	)-- 	( 	9.5 , 	1.46217	)-- 	( 	10 , 	1.54298	);						
  
   \draw[purple,thick,densely dashed] ( 	0.1 , 	-0.00665	)-- 	( 	0.5 , 	-0.05175	)-- 	( 	1 , 	-0.10388	)-- 	( 	1.5 , 	-0.14967	)-- 	( 	2 , 	-0.19059	)-- 	( 	2.5 , 	-0.22783	)-- 	( 	3 , 	-0.26224	)-- 	( 	3.5 , 	-0.29445	)-- 	( 	4 , 	-0.32501	)-- 	( 	4.5 , 	-0.35441	)-- 	( 	5 , 	-0.38306	)-- 	( 	5.5 , 	-0.41133	)-- 	( 	6 , 	-0.43952	)-- 	( 	6.5 , 	-0.46792	)-- 	( 	7 , 	-0.49672	)-- 	( 	7.5 , 	-0.52613	)-- 	( 	8 , 	-0.55627	)-- 	( 	8.5 , 	-0.58730	)-- 	( 	9 , 	-0.61931	)-- 	( 	9.5 , 	-0.65238	)-- 	( 	10 , 	-0.68660	);					

  \draw[green,thick,dashdotted] ( 	0.1 , 	0.03724	)-- 	( 	0.5 , 	0.12742	)-- 	( 	1 , 	0.21698	)-- 	( 	1.5 , 	0.30840	)-- 	( 	2 , 	0.40247	)-- 	( 	2.5 , 	0.49794	)-- 	( 	3 , 	0.59413	)-- 	( 	3.5 , 	0.69088	)-- 	( 	4 , 	0.78822	)-- 	( 	4.5 , 	0.88625	)-- 	( 	5 , 	0.98510	)-- 	( 	5.5 , 	1.08485	)-- 	( 	6 , 	1.18553	)-- 	( 	6.5 , 	1.28716	)-- 	( 	7 , 	1.38973	)-- 	( 	7.5 , 	1.49319	)-- 	( 	8 , 	1.59750	)-- 	( 	8.5 , 	1.70256	)-- 	( 	9 , 	1.80833	)-- 	( 	9.5 , 	1.91472	)-- 	( 	10 , 	2.02165	);																																	
     
\end{tikzpicture}

\begin{tikzpicture}
  \begin{customlegend}
  [legend entries={$\xi=0.5$,$\xi=1$,$\xi=5$,$\xi=10$},legend columns=-1,legend style={/tikz/every even column/.append style={column sep=0.8cm}}]  
  \addlegendimage{red,thick,smooth} 
  \addlegendimage{blue,thick, dotted}  
  \addlegendimage{purple,thick,densely dashed}   
  \addlegendimage{green,thick,dashdotted} 
  \end{customlegend}
\end{tikzpicture}
\caption{L\'evy input with $(\lambda,\eta,\mu,d,\sigma,\beta,\gamma)=(0.2,1.2,0.5,-1,0.1,1,5)$. The estimation error of the exponent function, $\hat{\varphi}_n(\alpha;\tilde{\psi}_n)-\varphi(\alpha)$, based on a sample of $m=200$ observations for different sampling rates $\xi\in\{0.5,1,5,10\}$ and threshold $\tau=2$.}\label{fig:phi_levy}
\end{figure}
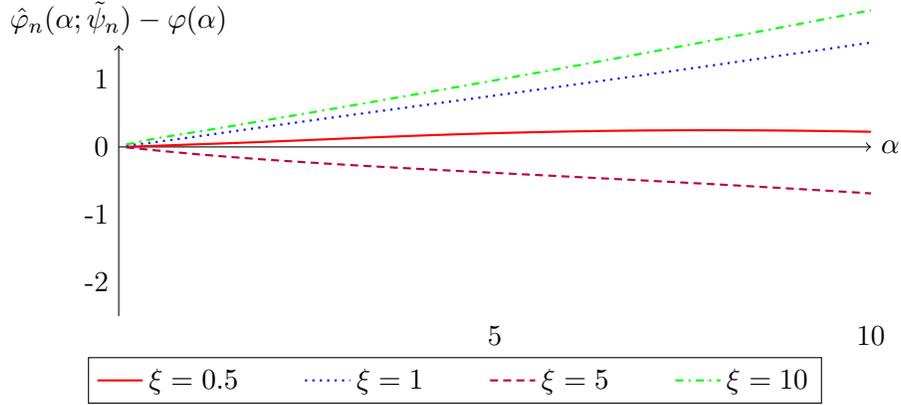

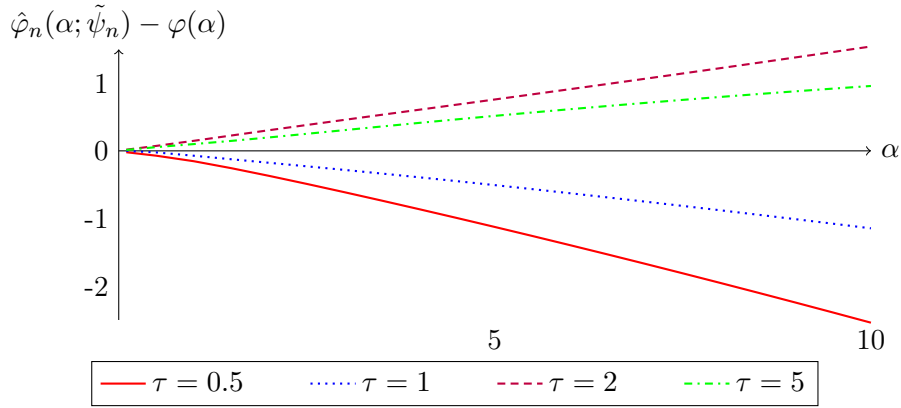
\begin{figure}
\centering
\begin{tikzpicture}[xscale=1,yscale=0.9]
 \def\xmin{0}
 \def\xmax{10}
 \def\ymin{-2.5}
 \def\ymax{1.5}
  \draw[->] (\xmin,0) -- (\xmax,0) node[right] {$\alpha$} ;
  \draw[->] (\xmin,\ymin) -- (\xmin,\ymax) node[above] {$\hat{\varphi}_n(\alpha;\tilde{\psi}_n)-\varphi(\alpha)$} ;
  \foreach \x in {5,10}
  \node at (\x,\ymin) [below] {\x};
  \foreach \y in {-2,-1,0,1}
  \node at (\xmin,\y) [left] {\y};

  \draw[red,thick,smooth]  ( 	0.1 , 	-0.02021	)-- 	( 	0.5 , 	-0.07175	)-- 	( 	1 , 	-0.15349	)-- 	( 	1.5 , 	-0.25681	)-- 	( 	2 , 	-0.36998	)-- 	( 	2.5 , 	-0.48866	)-- 	( 	3 , 	-0.61105	)-- 	( 	3.5 , 	-0.73626	)-- 	( 	4 , 	-0.86382	)-- 	( 	4.5 , 	-0.99352	)-- 	( 	5 , 	-1.12518	)-- 	( 	5.5 , 	-1.25871	)-- 	( 	6 , 	-1.39407	)-- 	( 	6.5 , 	-1.53122	)-- 	( 	7 , 	-1.67014	)-- 	( 	7.5 , 	-1.81080	)-- 	( 	8 , 	-1.95321	)-- 	( 	8.5 , 	-2.09735	)-- 	( 	9 , 	-2.24321	)-- 	( 	9.5 , 	-2.39080	)-- 	( 	10 , 	-2.54011	);		
  
  \draw[blue,thick, dotted]  ( 	0.1 , 	-0.00039	)-- 	( 	0.5 , 	-0.02325	)-- 	( 	1 , 	-0.07203	)-- 	( 	1.5 , 	-0.12441	)-- 	( 	2 , 	-0.17643	)-- 	( 	2.5 , 	-0.22859	)-- 	( 	3 , 	-0.28160	)-- 	( 	3.5 , 	-0.33583	)-- 	( 	4 , 	-0.39139	)-- 	( 	4.5 , 	-0.44832	)-- 	( 	5 , 	-0.50654	)-- 	( 	5.5 , 	-0.56597	)-- 	( 	6 , 	-0.62654	)-- 	( 	6.5 , 	-0.68818	)-- 	( 	7 , 	-0.75081	)-- 	( 	7.5 , 	-0.81438	)-- 	( 	8 , 	-0.87886	)-- 	( 	8.5 , 	-0.94421	)-- 	( 	9 , 	-1.01040	)-- 	( 	9.5 , 	-1.07742	)-- 	( 	10 , 	-1.14527	);	
  
   \draw[purple,thick,densely dashed]  ( 	0.1 , 	0.01728	)-- 	( 	0.5 , 	0.07348	)-- 	( 	1 , 	0.14877	)-- 	( 	1.5 , 	0.22741	)-- 	( 	2 , 	0.30554	)-- 	( 	2.5 , 	0.38244	)-- 	( 	3 , 	0.45836	)-- 	( 	3.5 , 	0.53374	)-- 	( 	4 , 	0.60894	)-- 	( 	4.5 , 	0.68421	)-- 	( 	5 , 	0.75974	)-- 	( 	5.5 , 	0.83565	)-- 	( 	6 , 	0.91203	)-- 	( 	6.5 , 	0.98892	)-- 	( 	7 , 	1.06637	)-- 	( 	7.5 , 	1.14439	)-- 	( 	8 , 	1.22299	)-- 	( 	8.5 , 	1.30215	)-- 	( 	9 , 	1.38189	)-- 	( 	9.5 , 	1.46217	)-- 	( 	10 , 	1.54298	);		 		

  \draw[green,thick,dashdotted]  ( 	0.1 , 	0.01277	)-- 	( 	0.5 , 	0.05551	)-- 	( 	1 , 	0.10051	)-- 	( 	1.5 , 	0.14767	)-- 	( 	2 , 	0.19870	)-- 	( 	2.5 , 	0.25211	)-- 	( 	3 , 	0.30642	)-- 	( 	3.5 , 	0.36061	)-- 	( 	4 , 	0.41406	)-- 	( 	4.5 , 	0.46642	)-- 	( 	5 , 	0.51756	)-- 	( 	5.5 , 	0.56740	)-- 	( 	6 , 	0.61594	)-- 	( 	6.5 , 	0.66316	)-- 	( 	7 , 	0.70912	)-- 	( 	7.5 , 	0.75383	)-- 	( 	8 , 	0.79731	)-- 	( 	8.5 , 	0.83957	)-- 	( 	9 , 	0.88065	)-- 	( 	9.5 , 	0.92055	)-- 	( 	10 , 	0.95930	);				
     
\end{tikzpicture}

\begin{tikzpicture}
  \begin{customlegend}
  [legend entries={ $\tau=0.5$,$\tau=1$,$\tau=2$,$\tau=5$},legend columns=-1,legend style={/tikz/every even column/.append style={column sep=0.8cm}}]  
  
  \addlegendimage{red,thick,smooth}  
  \addlegendimage{blue,thick, dotted}    
  \addlegendimage{purple,thick,densely dashed}					
  \addlegendimage{green,thick,dashdotted}  
  \end{customlegend}
\end{tikzpicture}
\caption{L\'evy input with $(\lambda,\eta,\mu,d,\sigma,\beta,\gamma)=(0.2,1.2,0.5,-1,0.1,1,5)$.The estimation error of the exponent function, $\hat{\varphi}_n(\alpha;\tilde{\psi}_n)-\varphi(\alpha)$, based on a sample of $m=200$ observations for different thresholds $\tau\in\{0.5,1,2,5\}$ and sampling rate $\xi=1$.}\label{fig:phi_levy2}
\end{figure}

\section{Discussion and concluding remarks}\label{sec:discussion}

In this paper we have studied the problem of statistical inference on the input process to a L\'evy-driven queue, where the workload is observed at Poisson times. An estimation method for the characteristic exponent function of the input process has been introduced. The setting is semi-parametric because although the goal is to estimate a function, the methods employed are parametric (namely, method-of-moments and ML-based). 
Under specific verifiable conditions, consistency and asymptotic normality are proven. 

\vspace{2mm}

\noindent {\it Relation with previous work.} In the introduction we already reflected on the papers \cite{BP1999, HP2006}; we now focus on the relation with the generalized method-of-moments (GMM) framework of \cite{DG2004}. 
Recall that our main estimation device is the Z-estimator $\hat{\varphi}_n(\alpha)$ which is derived by solving an estimation equation that equates the empirical LST of the workload to the conditional expected value after an exponential sampling time. In this respect our work is related to \cite{DG2004} that studies GMM estimation of parameters of Markov processes based on data sampled according to a random process which may be time and state dependent. In our setting, a GMM procedure is used to estimate the vector $\left(\varphi(\alpha_1),\ldots,\varphi(\alpha_p)\right)$ by solving the respective estimation equations \eqref{eq:phi_z_equation} for $i=1,\ldots,p$. 

Our estimation equations can be stated in terms of the generator functions used in \cite{DG2004} with a homogeneous Poisson sampling process. However, there is a need to externally estimate the term $\psi(\xi)$; importantly, this  cannot be achieved simultaneously using the same estimation equations. This is more than just a technical issue associated with the estimation method, and is inherent to the non-parametric setting of estimating the function $\varphi$, as opposed to finite-dimensional parameter estimation. Another issue is that we make inference on the input process without observing it directly but rather from observations of the reflected process. Finally, the asymptotic analysis of \cite{DG2004} only deals with asymptotic normality for the case of a one-dimensional parameter space, where we have derived the multi-dimensional asymptotic distribution of the estimation errors and provided explicit terms for all associated asymptotic (co-)variance terms.

\vspace{2mm}

\noindent {\it Identifiability for CP input.} For the CP case one can in principle identify the $\lambda$ and $G^*(\alpha)$ separately from the estimates $\hat{\varphi}_n(\alpha;\hat{\psi}_n)$. First one sets, for some large value of $\alpha_+$,
\begin{equation}\hat\lambda_n = \alpha_+- \hat{\varphi}_n(\alpha_+;\hat{\psi}_n),\end{equation}
and then, for any $\alpha$,
\begin{equation}
\hat G_n^*(\alpha) = 1 +\frac{1}{\hat\lambda} \left(\hat{\varphi}_n(\alpha;\hat{\psi}_n)-\alpha\right)\ .
\end{equation}

\vspace{2mm}

\noindent {\it Parametric special cases.}
Our estimation method is directly applicable to various parametric special cases using the transient moment equations. For example, when the input is Brownian motion one has $\varphi(\alpha)=-d \alpha +\frac{1}{2}\alpha^2\sigma^2$. In this case the first two conditional moment equations, \eqref{eq:EV_transient} and \eqref{eq:EV2_transient}, can be used to construct estimators for $d$ and $\sigma^2$. For the M/M/$1$ queue jump sizes follow an exponential distribution with rate $\mu$. Therefore, $G^*(\alpha)={\mu}/({\mu+\alpha})$, which yields \begin{equation}
\varphi(\alpha)=\frac{\alpha(\mu+\alpha-\lambda)}{\mu+\alpha},\:\:\:\:\psi(\xi)=\frac{1}{2}\left(\xi+\lambda-\mu+\sqrt{(\xi+\lambda-\mu)^2+4\xi\mu}\right).
\end{equation}
In Section \ref{sec:MG1_MLE} for the subordinator input case an MLE for $\psi(\xi)$ was derived. This can be used to estimate the two parameters of an M/M/$1$, or any other two-parameter formulation for that matter, by using only the first moment equation \eqref{eq:EV_transient}. This can be done similarly for other types of parametric subordinator input such as a Gamma process.

\vspace{2mm}
\noindent {\it Bootstrap estimation.}
The asymptotic variance of the estimation errors (in Theorems \ref{thm:phiZ_asymp_norm}, \ref{thm:phiZ_asymp_norm_multi}, \ref{thm:psi_MLE_asymp_norm}, and \ref{thm:psi_MLE_Z_asymp_norm}, as well as  Lemmas \ref{lemma:Jn_normal} and \ref{lemma:ell1_normal}) are all expressed in terms of the stationary distribution of workload $V$, which is unknown. For example, the variance of the estimation error of the MLE $\hat{\psi}_n$ in Theorem \ref{thm:psi_MLE_asymp_norm} is
\begin{equation}
I_\xi:=\E\left[\frac{\frac{\xi}{\psi(\xi)}e^{-\psi(\xi) V}\left(\frac{1}{\psi(\xi)}+V\right)^2}{1-\frac{\xi}{\psi(\xi)}e^{-\psi(\xi) V}}\right] \ ,
\end{equation}
which cannot be directly computed without knowing the distribution of $V$. A reasonable approach to overcome this issue is to apply a bootstrap procedure: given a sample of size $n$ randomly select a sample of size $m<n$ with replacement and compute the sample mean of the desired term, e.g.\ $I_\xi$,  and repeat the above a large  number of times and compute the mean of all results. Such a bootstrap procedure is known to be consistent. As a consequence, for large $n$ and $m$ the method has good accuracy. We refer to e.g.\ \cite[Ch. 23]{book_vdV1998} for general results, and \cite{H2005} for a specific implementation dealing with the stationary workload of an M/G/1 queue.

\vspace{2mm}

\noindent {\it Future research.}
There are several interesting questions that are left open in this work. First of all, the question whether a consistent estimator for $\psi(\xi)$ can be constructed for the general case. Although simulation analysis suggests that in practice the biased estimator performs well, it is still of interest to find a consistent estimator for theoretical and practical efficiency purposes. Another issue is that the object of estimation is the function $\varphi$ but all asymptotic results are in terms of the function at a finite collection of points. A natural extension would be to derive functional limit theorems, for example uniform-consistency and a FCLT approximation for the estimation error as a Gaussian process. In Section \ref{sec:simulation} we observed that the asymptotic variance appears to be increasing and unbounded with $\alpha$ which may lead to technical difficulties, and in particular it is likely that the estimator is not uniformly-consistent.

\vspace{2mm}

\noindent {\it Applications.}
We conclude this paper by mentioning a few applications for which the model and methods presented here may be useful.
\begin{enumerate}
\item[$\circ$] \textit{Optimal sampling rate}. As we saw, explicitly in Section \ref{sec:MG1_MLE} and numerically in Section \ref{sec:simulation}, the accuracy of the estimators improves as the sampling rate decreases. This, however, comes at a cost of a longer sampling duration. In applications where there is a cost associated with the duration of the learning period of the system, this leads to the problem of finding an optimal sampling rate that balances this tradeoff. Our results can be used to derive an approximate solution.
\item[$\circ$] \textit{Stability detection}. Throughout this work we assumed that $\E X(1)<0$ and consequently the workload process is stable. However when the input is unknown, we do not know {\it a priori} whether $\E X(1)<0$. For example if the processing speed can be controlled in an M/G/$1$ queue but the input is unknown then it is possible that for some processing speeds the queue is unstable. In this case a likelihood ratio test can be constructed using the setup of Section \ref{sec:MG1_MLE}. In the general case, the biased estimator $\hat{\vartheta}_n$ for $-\E X(1)$ (Section \ref{sec:levy}) can be used to set up a threshold rule for detecting instability.
%
\item[$\circ$] \textit{Dynamic pricing}. Suppose that the server can charge an admission price with the goal of maximizing social welfare or revenue. The optimal pricing scheme obviously depends on the input distribution \cite{book_HH2003}, which may not be known. This naturally leads to a model with an exploration-exploitation element, and using our framework the exploration step can be carried out for transient observations without relying on any approximations. The extension to the dynamic setting requires a state-dependent input model such as \cite{BBR2009}.
\item[$\circ$] \textit{Testing distributional assumptions}. An issue that is related to the two previous points is that often controllable system parameters are chosen with the goal of maximizing specific performance measures. This is typically done under some modeling assumptions. Our framework enables testing whether the input to the system is consistent with the modeling assumptions (as long as the input alternatives are within the class of spectrally positive Lévy processes).
\item[$\circ$] \textit{Service level monitoring}. Service systems are often obliged to provide performance guarantees. For example, when an Internet Service Provider commits to certain bandwidth, it is important for users and regulators to be able to estimate the performance of the system in order to monitor whether the contract is upheld or not \cite[Ch. IX]{book_H2016}. Our method allows to estimate performance measures, such as moments and tail probabilities, using a transient sample of observations.
\end{enumerate}

\section*{Acknowledgments}\label{sec:acknowledgments}
We wish to thank Offer Kella for his valuable advice and comments and two anonymous reviewers for their helpful comments. This research was carried out as part of the NWO Gravitation project {\sc Networks}, grant number 024.002.003.


\pagebreak

\pagenumbering{arabic}
\setcounter{page}{1}

\noindent \LARGE{Online Appendix: Estimating the input of a L\'evy-driven queue by Poisson sampling of the workload process}
\normalsize

\begin{appendices}
\section{Proofs for Section \ref{sec:phi_z_estimator}}\label{sec:appA}

\begin{proof}[Proof of Lemma \ref{lemma:Jn_limits}] We first prove \eqref{eq:Jn_dpsi}.
From \eqref{eq:Jn},
\begin{equation}
\left.\frac{\partial}{\partial \psi}J_n({\psi},\varphi)\right|_{\varphi=\varphi(\alpha),\psi=\psi(\xi)}=-\frac{\xi}{\xi-\varphi(\alpha)}\cdot \frac{1}{n}\sum_{i=1}^n\left(\frac{\alpha}{{\psi(\xi)}^2} e^{-{\psi(\xi)} V_{i-1}}+\frac{\alpha V_{i-1}}{{\psi(\xi)}}e^{-{\psi(\xi)} V_{i-1}}\right)\ .
\end{equation}
Applying the GPK formula \eqref{eq:GPK}, PASTA and the law of large numbers,
\begin{equation}
\begin{split}
\frac{\partial}{\partial \psi}J_n({\psi(\xi)},\varphi(\alpha)) & {\asarrow} -\frac{\xi}{\xi-\varphi(\alpha)}\left(
\frac{\alpha}{{\psi(\xi)}^2} \E e^{-{\psi(\xi)} V_{i-1}}
+
\frac{\alpha }{{\psi(\xi)}}
\xi\,\E[V e^{-{\psi(\xi)} V}]\right) \ \\&=
-\frac{\alpha\varphi'(0)}{(\xi-\varphi(\alpha))\psi'(\xi)}=\partial J_\psi\ ,
\end{split}
\end{equation}
as $n\to\infty$,
where it is used that
\[
\E [V e^{-\alpha V}] = \frac{\varphi'(0)}{(\varphi(\alpha))^2}\big(\alpha \varphi'(\alpha)-\varphi(\alpha)\big)\ .
\]
Now we prove \eqref{eq:Jn_dphi}. Following the same steps as above, 
\begin{equation}
\left.\frac{\partial}{\partial \varphi}J_n({\psi},\varphi)\right|_{\varphi=\varphi(\alpha),\psi=\psi(\xi)}\asarrow -\frac{\alpha\varphi\apost (0)}{\varphi(\alpha)(\xi-\varphi(\alpha))}=:
\partial J_\varphi \ ,
\end{equation}
as $n\to\infty$.
\end{proof}

\begin{proof} [Proof of Lemma \ref{lemma:Jn_normal}]
\begin{enumerate}
\item[a.] We first compute the asymptotic variance of $\sqrt{n}J_n({\psi(\xi)},\varphi(\alpha))$ and then apply the martingale CLT of Lemma \ref{lemma:MCLT}a to verify that the limiting distribution is normal. Let
\begin{equation}
Z_i:=e^{-\alpha V_i}-\frac{\xi}{\xi-\varphi(\alpha)}\left(e^{-\alpha V_{i-1}}-\frac{\alpha}{{\psi(\xi)}}e^{-{\psi(\xi)} V_{i-1}}\right)\ .
\end{equation} 
Then $J_n({\psi(\xi)},\varphi(\alpha))=\frac{1}{n}\sum_{i=1}^n Z_i$. By \eqref{eq:V_LST_transient}, $\E[Z_i|V_{i-1}]=0$. Therefore,
\begin{equation}
n\Var(J_n({\psi(\xi)},\varphi(\alpha)))=\frac{1}{n}\E\left(\sum_{i=1}^n Z_i\right)^2=\frac{1}{n}\left(\sum_{i=1}^n \E Z_i^2+2\sum_{i=2}^n\sum_{j<i} \E Z_i Z_j\right)\ ,
\end{equation}
and as
\begin{equation}
\E Z_i Z_j=\E[\E( Z_i Z_j|(V_0,\ldots,V_{i-1}))]=\E[Z_j\E( Z_i |V_{i-1})]=0\quad \forall j<i\ ,
\end{equation}
we have that $n\Var(J_n({\psi(\xi)},\varphi(\alpha)))=\frac{1}{n}\sum_{i=1}^n \E Z_i^2$. The second moment of $Z_i$ is
\begin{equation}
\begin{split}
\E Z_i^2 &= \E\Bigg[ e^{-2\alpha V_i}-\frac{2\xi}{\xi-\varphi(\alpha)}e^{-\alpha V_i}\left(e^{-\alpha V_{i-1}}-\frac{\alpha}{{\psi(\xi)}}e^{-{\psi(\xi)} V_{i-1}}\right) \\
&\quad \quad +\left(\frac{\xi}{\xi-\varphi(\alpha)}\right)^2\left(e^{-2\alpha V_{i-1}}-\frac{2\alpha}{{\psi(\xi)}}e^{-(\alpha+{\psi(\xi)}) V_{i-1}}+\left(\frac{\alpha}{{\psi(\xi)}}\right)^2 e^{-2{\psi(\xi)} V_{i-1}}\right) \Bigg] \\
&= \E \Bigg[ e^{-2\alpha V_i}+\left(\frac{\xi}{\xi-\varphi(\alpha)}\right)^2 e^{-2\alpha V_{i-1}} + \left(\frac{\xi\alpha}{{\psi(\xi)}(\xi-\varphi(\alpha))}\right)^2 e^{-2{\psi(\xi)} V_{i-1}} \\
&\quad \quad -\frac{2\xi}{\xi-\varphi(\alpha)}e^{-\alpha V_i-\alpha V_{i-1}} + \frac{2\xi\alpha}{{\psi(\xi)}(\xi-\varphi(\alpha))} e^{-\alpha V_i-{\psi(\xi)} V_{i-1}} -\frac{2\xi^2\alpha}{{\psi(\xi)}(\xi-\varphi(\alpha))^2} e^{-(\alpha+{\psi(\xi)})V_{i-1}} \Bigg] \ .
\end{split}
\end{equation}
The limit of $\E e^{-\alpha V_i}$ as $i\to\infty$ is given by PASTA and the GPK formula \eqref{eq:GPK}. The joint distribution of $(V_i,V_{i-1})$ is obtained by conditioning on $V_{i-1}$ and applying \eqref{eq:V_LST_transient}, for any $\alpha>0$ and $\beta>0$,
\begin{equation}
\begin{split}
\E e^{-\alpha V_i-\beta V_{i-1}} &= \E\left[e^{-\beta V_{i-1}}\frac{\xi}{\xi-\varphi(\alpha)}\left(e^{-\alpha V_{i-1}}-\frac{\alpha}{{\psi(\xi)}}e^{-{\psi(\xi)} V_{i-1}}\right)\right] \\
&= \frac{\xi}{\xi-\varphi(\alpha)}\E\left[e^{-(\alpha+\beta) V_{i-1}}-\frac{\alpha}{{\psi(\xi)}}e^{-({\psi(\xi)}+\beta) V_{i-1}}\right] \\
& \xrightarrow{i\to\infty} \frac{\xi\varphi\apost (0)}{\xi-\varphi(\alpha)}\left(\frac{\alpha+\beta}{\varphi(\alpha+\beta)}-\frac{\alpha({\psi(\xi)}+\beta)}{{\psi(\xi)}\varphi({\psi(\xi)}+\beta)}\right) \ .
\end{split}
\end{equation}
Therefore,
\begin{equation}
\begin{split}
\lim_{i\to\infty}\E Z_i^2 &= \left(1+\left(\frac{\xi}{\xi-\varphi(\alpha)}\right)^2\right)\frac{2\alpha\varphi\apost (0)}{\varphi(2\alpha)} + \left(\frac{\xi\alpha}{{\psi(\xi)}(\xi-\varphi(\alpha))}\right)^2\cdot\frac{2{\psi(\xi)}\varphi\apost (0)}{\varphi(2{\psi(\xi)})} \\
&\quad \quad -\frac{2\xi^2}{(\xi-\varphi(\alpha))^2}\left(\frac{2\alpha\varphi\apost (0)}{\varphi(2\alpha)}-\frac{\alpha(\alpha+{\psi(\xi)})\varphi\apost (0)}{{\psi(\xi)}\varphi(\alpha+{\psi(\xi)})}\right) \\
&\quad \quad + \frac{2\xi^2\alpha}{{\psi(\xi)}(\xi-\varphi(\alpha))^2}\left(\frac{(\alpha+{\psi(\xi)})\varphi\apost (0)}{\varphi(\alpha+{\psi(\xi)})}-\frac{2\alpha\varphi\apost (0)}{\varphi(2{\psi(\xi)})}\right)\\
&\quad \quad -\frac{2\xi^2\alpha}{{\psi(\xi)}(\xi-\varphi(\alpha))^2}\cdot \frac{(\alpha+{\psi(\xi)})\varphi\apost (0)}{\varphi(\alpha+{\psi(\xi)})}\ .
\end{split}
\end{equation}
Rearranging terms we obtain $\lim_{i\to\infty} \E Z_i^2 =\sigma_\alpha^2$, as defined in \eqref{eq:phi_sigma_alpha}. Therefore, we conclude that
\begin{equation}\label{eq:J_EZ2_lim}
n\Var\left(J_n({\psi(\xi)},\varphi(\alpha))\right)=\frac{1}{n}\sum_{i=1}^n\E Z_i^2 \xrightarrow{n\to\infty} \sigma_{\alpha}^2\ .
\end{equation}
We now verify the conditions of Lemma \ref{lemma:MCLT}a.
Let
\begin{equation}
Z_{ni}:=\frac{Z_i}{\sqrt{\Var(nJ_n({\psi(\xi)},\varphi(\alpha)))}}=\frac{nJ_{i}({\psi(\xi)},\varphi(\alpha))-nJ_{i-1}({\psi(\xi)},\varphi(\alpha))}{\sqrt{\Var(nJ_n({\psi(\xi)},\varphi(\alpha)))}}\ , 
\end{equation}
and observe that $nJ_n({\psi(\xi)},\varphi(\alpha))$ is a martingale as $\E[Z_{i}|V_{i-1}]=0$ for every $i\geq 1$. Furthermore, for any $n$, $\Var(nJ_n({\psi(\xi)},\varphi(\alpha)))>0$ and $\P(Z_i<\infty)=1$ as $\E Z_i=0$ for all $i=1,\ldots,n$. Hence, $\E\max_{1\leq i\leq n}Z_{ni}^2<\infty$ and condition \eqref{eq:M_mclt1_finite} holds. Above, we established that $n\Var(J_n({\psi(\xi)},\varphi(\alpha)))$ goes to a constant, and thus
\begin{equation}
\Var(nJ_n({\psi(\xi)},\varphi(\alpha)))=n^2\Var(J_n({\psi(\xi)},\varphi(\alpha)))\to\infty\ ,
\end{equation} 
and as $\P(Z_i<\infty)=1$, we conclude that condition \eqref{eq:M_mclt2_maxtozero} is satisfied. Therefore, the Martingale CLT holds and $\sum_{i=1}^n Z_{ni} {\darrow} \mathrm{N}(0,1)$. Finally, applying Slutsky's lemma yields
\begin{equation}
\sqrt{n}J_n({\psi(\xi)},\varphi(\alpha))=\sqrt{n}\sum_{i=1}^n Z_i=\sqrt{n\Var\left(J_n({\psi(\xi)},\varphi(\alpha))\right)}\sum_{i=1}^n Z_{ni} {\darrow} \sqrt{\sigma_{\alpha}^2}\mathrm{N}(0,1)\ .
\end{equation}

\item[b.] In the previous part of the lemma we established that $J_n({\psi(\xi)},\varphi(\alpha))$ satisfies the conditions of Lemma \ref{lemma:MCLT}a. If we further assume that $\psi_n-\psi(\xi) \approx \frac{1}{n}M_n+R_n$ such that $M_n$ satisfies the conditions of Lemma \ref{lemma:MCLT}a, then 
\begin{equation}
\sqrt{n}\left(\psi_n-{\psi(\xi)}\right)\approx \frac{M_n}{\sqrt{n}}\approx \sqrt{\frac{\Var(M_n)}{n}}\mathrm{N}(0,1)\ ,
\end{equation}
where $\sqrt{{\Var(M_n)}/{n}}\to\sigma_{\xi}$ by condition (ii). We can next apply Lemma \ref{lemma:MCLT}b to conclude joint asymptotic normality:
\begin{equation}
\sqrt{n}\begin{pmatrix}
J_n({\psi(\xi)},\varphi(\alpha)) \\
\psi_n-{\psi(\xi)}
\end{pmatrix} \approx \sqrt{n}\begin{pmatrix}
J_n({\psi(\xi)},\varphi(\alpha)) \\
\frac{M_n}{n}
\end{pmatrix} {\darrow}\mathrm{N}\left(0,S \right) \ ,
\end{equation}
if the covariance matrix $S$ has finite terms. The variance terms are $S_{11}=\sigma_\alpha^2 $ and
\begin{equation}
S_{22}=\lim_{n\to\infty}\E[M_n^2]=\lim_{n\to\infty}\E [(\psi_n-{\psi(\xi)})^2]=\sigma_{\xi}^2\ .
\end{equation}
The asymptotic covariance is finite by condition (iii):
\begin{equation}
\begin{split}
S_{12} &= S_{21}=\lim_{n\to\infty}\E\left[\sqrt{n}J_n({\psi(\xi)},\varphi(\alpha)) \frac{M_n}{\sqrt{n}}\right] \\
&= \lim_{n\to\infty}\frac{1}{n}\sum_{i=1}^n \E\left[Z_i M_n\right] = \sigma_{\alpha,\psi}^2<\infty\ .
\end{split}
\end{equation}
By the Cram\'er-Wold device this implies that any linear combination of $J_n({\psi(\xi)},\varphi(\alpha))$ and $\psi_n-{\psi(\xi)}$ is also normal, and thus \eqref{eq:Jpsi_sum_norm} follows.
\end{enumerate}
\end{proof}

\begin{proof}[Proof of Lemma \ref{lemma:Jn_remainder}]
As the proof is identical for all three second order terms, we restrict ourselves to the first of them: 
\begin{equation}
\sqrt{n}(\varphi_n-\varphi(\alpha))^2
\partial^2_\varphi J
=(\varphi_n-\varphi(\alpha))\sqrt{n}(\varphi_n-\varphi(\alpha))\partial^2_\varphi J,\:\:\:\:\:
\partial^2_\varphi J:=\frac{\partial^2}{\partial \varphi^2}J_n({\psi(\xi)},\varphi(\alpha)).
\end{equation}
By explicit evaluation it is straightforward to verify that $\partial^2_\varphi J<\infty$, almost surely if $\E X(1)<0$, i.e., the stationary workload $V$ exists and is bounded almost surely. The consistency assumption $\psi_n{\parrow}{\psi(\xi)}$ implies, by Theorem \ref{thm:phi_consistency}, that $\varphi_n-\varphi(\alpha){\parrow} 0$. The scaled error term $\sqrt{n}(\varphi_n-\varphi(\alpha))$ is almost surely bounded as the first two moments of the error have finite limits due to Lemma \ref{lemma:Jn_normal}. Therefore the error term indeed diminishes to zero, as $n\to\infty$.
\end{proof}

\begin{proof}[Proof of Theorem \ref{thm:phiZ_asymp_norm_multi}]
In Lemma \ref{lemma:Jn_normal} we verified the martingale CLT conditions of Lemma~\ref{lemma:MCLT}a for every element in ${\boldsymbol J}_n:=(J_n({\psi(\xi)},\varphi(\alpha_1)),\ldots J_n({\psi(\xi)},\varphi(\alpha_p)))$.
Along the same lines, it is shown that any linear combination of the entries of ${\boldsymbol J}_n$ converges to a normal random variable, so that by the Cram\'er-Wold device we conclude joint asymptotic normality.
All that remains is to compute for every pair $\alpha\neq\beta$ the covariance \begin{equation}
\lim_{n\to\infty}
\Cov\Big[\sqrt{n}(\hat{\varphi}_n(\alpha;\psi_n)-\varphi(\alpha)), \sqrt{n}(\hat{\varphi}_n(\beta;\psi_n)-\varphi(\beta))\Big].
\end{equation}
 By \eqref{eq:Jn_approx2}, this covariance has the same limit as
\begin{equation}
\frac{n}{\partial J_{\varphi,\alpha}\partial J_{\varphi,\beta}}\Cov\Big[J_n({\psi(\xi)},\varphi(\alpha))+\partial J_{\psi,\alpha}(\psi_n-{\psi(\xi)}), J_n({\psi(\xi)},\varphi(\beta))+\partial J_{\psi,\beta}(\psi_n-{\psi(\xi)})\Big] \ .
\end{equation}
The covariance can be computed by considering the covariance of the different pairs,
\begin{equation}
\begin{split}
& n\Cov\Big[ J_n({\psi(\xi)},\varphi(\alpha))+\partial J_{\psi,\alpha}(\psi_n-{\psi(\xi)}), J_n({\psi(\xi)},\varphi(\beta))+\partial J_{\psi,\beta}(\psi_n-{\psi(\xi)})\Big] \\
& \quad = n\Cov\left[ J_n({\psi(\xi)},\varphi(\alpha)), J_n({\psi(\xi)},\varphi(\beta))\right] \\
& \quad \quad + n\partial J_{\psi,\beta}\Cov\left[ J_n({\psi(\xi)},\varphi(\alpha)), \psi_n-{\psi(\xi)}\right] \\
& \quad \quad + n\partial J_{\psi,\alpha}\Cov\left[ J_n({\psi(\xi)},\varphi(\beta)), \psi_n-{\psi(\xi)}\right] \\
& \quad \quad + \partial J_{\psi,\alpha}\partial J_{\psi,\beta}\Var\left( \sqrt{n}(\psi_n-{\psi(\xi)})\right) \ .
\end{split}
\end{equation}
The limit of the latter three terms are $\sigma_{\alpha,\xi}^2$, $\sigma_{\beta,\xi}^2$ and $\sigma_{\xi}^2$, respectively. As $\E J_n({\psi(\xi)},\varphi(\alpha))= \E J_n({\psi(\xi)},\varphi(\beta))=0$,
\begin{equation}
\Cov\left( J_n({\psi(\xi)},\varphi(\alpha)), J_n({\psi(\xi)},\varphi(\beta))\right) = \E\left[J_n({\psi(\xi)},\varphi(\alpha)) J_n({\psi(\xi)},\varphi(\beta))\right] \ .
\end{equation}
Similar to the arguments made in the proof of Lemma \ref{lemma:Jn_normal}, by conditioning on $V_{i-1}$ and applying \eqref{eq:V_LST_transient} we obtain that, for every $i>j$,
\begin{align}
& \E\Bigg[\left(e^{-\alpha V_i}-\frac{\xi}{\xi-\varphi(\alpha)}\left(e^{-\alpha V_{i-1}}-\frac{\alpha}{{\psi(\xi)}}e^{-{\psi(\xi)} V_{i-1}}\right)\right) \\
& \quad\:\:\cdot \left(e^{-\beta V_j}-\frac{\xi}{\xi-\varphi(\beta)}\left(e^{-\beta V_{j-1}}-\frac{\beta}{{\psi(\xi)}}e^{-{\psi(\xi)} V_{j-1}}\right)\right)\Bigg]=0 \ ,
\end{align}
and similarly by conditioning on $V_{j-1}$ for every $i<j$ we get a zero expectation for the respective terms. Hence, by \eqref{eq:Jn},
\begin{align}
\nonumber
\lefteqn{\hspace{-12mm}
n\E\left[J_n({\psi(\xi)},\varphi(\alpha)) J_n({\psi(\xi)},\varphi(\beta))\right]}\\&  \nonumber= \frac{1}{n}\sum_{i=1}^n \E\Bigg[\left(e^{-\alpha V_i}-\frac{\xi}{\xi-\varphi(\alpha)}\left(e^{-\alpha V_{i-1}}-\frac{\alpha}{{\psi(\xi)}}e^{-{\psi(\xi)} V_{i-1}}\right)\right) \\
& \quad \quad\quad \quad \quad\quad \:\:\cdot \left(e^{-\beta V_i}-\frac{\xi}{\xi-\varphi(\beta)}\left(e^{-\beta V_{i-1}}-\frac{\beta}{{\psi(\xi)}}e^{-{\psi(\xi)} V_{i-1}}\right)\right)\Bigg] \ .
\end{align}
We compute the expectation for every $i\geq 1$,
\begin{equation}
\begin{split}
& \E\Bigg[\left(e^{-\alpha V_i}-\frac{\xi}{\xi-\varphi(\alpha)}\left(e^{-\alpha V_{i-1}}-\frac{\alpha}{{\psi(\xi)}}e^{-{\psi(\xi)} V_{i-1}}\right)\right) \left(e^{-\beta V_i}-\frac{\xi}{\xi-\varphi(\beta)}\left(e^{-\beta V_{i-1}}-\frac{\beta}{{\psi(\xi)}}e^{-{\psi(\xi)} V_{i-1}}\right)\right)\Bigg] \\
& = \E\Bigg[e^{-(\alpha+\beta)V_i}+\frac{\xi^2}{(\xi-\varphi(\alpha))(\xi-\varphi(\beta))}e^{-(\alpha+\beta)V_{i-1}}+\frac{\alpha\beta\xi^2}{{\psi(\xi)}^2(\xi-\varphi(\alpha))(\xi-\varphi(\beta))}e^{-2{\psi(\xi)} V_{i-1}} \\
& \quad \quad \quad -\frac{\beta\xi^2}{{\psi(\xi)}(\xi-\varphi(\alpha))(\xi-\varphi(\beta))}e^{-(\alpha+{\psi(\xi)})V_{i-1}} -\frac{\xi}{\xi-\varphi(\beta)}e^{-\alpha V_i-\beta V_{i-1}} +\frac{\beta\xi}{{\psi(\xi)}(\xi-\varphi(\beta))}e^{-\alpha V_i-{\psi(\xi)} V_{i-1}}\\
& \quad \quad \quad -\frac{\alpha\xi^2}{{\psi(\xi)}(\xi-\varphi(\alpha))(\xi-\varphi(\beta))}e^{-(\beta+{\psi(\xi)})V_{i-1}} -\frac{\xi}{\xi-\varphi(\alpha)}e^{-\beta V_i-\alpha V_{i-1}} +\frac{\alpha\xi}{{\psi(\xi)}(\xi-\varphi(\alpha))}e^{-\beta V_i-{\psi(\xi)} V_{i-1}} \Bigg] \ ,
\end{split}
\end{equation}
and by applying the GPK formula \eqref{eq:GPK}, with the additional conditioning step used before for the joint limiting distribution of $(V_{i-1},V_i)$, we obtain the limit of the expectation,
\begin{align}
\nonumber
 &\frac{(\alpha+\beta)\varphi\apost (0)}{\varphi(\alpha+\beta)}+\frac{\xi^2\varphi\apost (0)}{(\xi-\varphi(\alpha))(\xi-\varphi(\beta))}\Bigg(\frac{\alpha+\beta}{\varphi(\alpha+\beta)}+\frac{2\alpha\beta}{{\psi(\xi)}\varphi(2{\psi(\xi)})}-\frac{\beta(\alpha+{\psi(\xi)})}{{\psi(\xi)}\varphi(\alpha+{\psi(\xi)})}-\frac{\alpha(\beta+{\psi(\xi)})}{{\psi(\xi)}\varphi(\beta+{\psi(\xi)})} \\
& \hspace{2cm}\ -2\left(\frac{\alpha+\beta}{\varphi(\alpha+\beta)}-\frac{\alpha(\beta+{\psi(\xi)})}{{\psi(\xi)}\varphi(\beta+{\psi(\xi)})}-\frac{\beta(\alpha+{\psi(\xi)})}{{\psi(\xi)}\varphi(\alpha+{\psi(\xi)})}+\frac{2\alpha\beta}{{\psi(\xi)}\varphi(2{\psi(\xi)})} \right)\Bigg).
\end{align}
Rearranging the terms yields the desired expression for $\sigma_{J,\alpha,\beta}^2$.
\end{proof}

\section{Proofs for Section \ref{sec:MG1_MLE}}\label{sec:appB}

\begin{proof}[Proof of Lemma \ref{lemma:smooth_log_likelihood}]
The smoothness is due to the fact that $\ell_n(\psi)$, $\ell_n\apost (\psi)$ and $\ell_n''(\psi)$ are all continuous functions of $\psi$. Furthermore, for every $\xi<\psi<\infty$ the functions are well defined and finite and therefore bounded on any compact interval.
\end{proof}

\begin{proof}[Proof of Lemma \ref{lemma:dlog_likelihood_limits}]
The first derivative of the empirical log-likelihood function is
\begin{equation}
\ell_n\apost(\psi) := \frac{1}{n} L_n'(\psi) = \frac{1}{n}\sum_{i=1}^n \frac{\frac{1}{\psi}+V_{i-1}}{1-\frac{\xi}{\psi}e^{-\psi V_{i-1}}}\left(\frac{\xi}{\psi}e^{-\psi V_{i-1}}-Y_i\right)\ ,
\end{equation}
for any $\psi>\xi$ (where we note that it is otherwise not always well defined).
The limiting (time-average) distribution of the pair $(Y_n,V_{n-1})$ is given by the distribution of a stationary workload observation and the probability to find the system empty in an exponentially distributed time after the observation. Thus, the joint distribution is characterized by $V=V(\infty)$, combined with conditional probability $\P(Y=1\,|\,V)=\frac{\xi}{\psi}e^{-\psi V}$. Applying PASTA for the sampling process $N(t)$,
\begin{equation}
\ell_n\apost (\psi) =\frac{1}{N(t_n)}\ell\apost _{N(t_n)}(\psi){\asarrow} \E\ell_1\apost (\psi)= \E\left[\frac{\frac{1}{\psi}+V}{1-\frac{\xi}{\psi}e^{-\psi V}}\left(\frac{\xi}{\psi}e^{-\psi V}-Y\right)\right]\ ,
\end{equation} 
and by iterating the expectation we obtain \eqref{eq:d_loglikelihood_limit},
\begin{equation}
\ell_n\apost (\psi) {\asarrow} \E(\E(\ell_1\apost (\psi)|V))=\E\left[\frac{\frac{1}{\psi}+V}{1-\frac{\xi}{\psi}e^{-\psi V}}\left(\frac{\xi}{\psi}e^{-\psi V}-\frac{\xi}{{\psi(\xi)}}e^{-{\psi(\xi)} V}\right)\right]\ .
\end{equation}
The same argument yields \eqref{eq:d2_loglikelihood_limit}.
\end{proof}

\begin{proof}[Proof of Lemma \ref{lemma:ell1_normal}] From
\begin{equation}
\begin{split}
\E\ell_n\apost (\psi(\xi) ) &= \frac{1}{n}\sum_{i=1}^n\E\left[\frac{\frac{1}{\psi(\xi) }+V_{i-1}}{1-\frac{\xi}{\psi(\xi) }e^{-\psi(\xi) V_{i-1}}}\left(\frac{\xi}{\psi(\xi) }e^{-\psi(\xi) V_{i-1}}-Y_i\right)\right] \\
&= \frac{1}{n}\sum_{i=1}^n\E\left[\frac{\frac{1}{\psi(\xi) }+V_{i-1}}{1-\frac{\xi}{\psi(\xi) }e^{-\psi(\xi) V_{i-1}}}\left(\frac{\xi}{\psi(\xi) }e^{-\psi(\xi) V_{i-1}}-\frac{\xi}{\psi(\xi) }e^{-\psi(\xi) V_{i-1}}\right)\right] =0\ ,
\end{split}
\end{equation}
we find
\begin{equation}
\Var(\ell_n\apost (\psi(\xi) )) = \frac{1}{n^2}\sum_{i=1}^n\E\left[\frac{\frac{1}{\psi(\xi) }+V_{i-1}}{1-\frac{\xi}{\psi(\xi) }e^{-\psi(\xi) V_{i-1}}}\left(\frac{\xi}{\psi(\xi) }e^{-\psi(\xi) V_{i-1}}-Y_i\right)\right]^2 +\frac{1}{n^2}\sum_{i=1}^n\sum_{j\neq i}s_{ij} \ ,
\end{equation}
where $s_{ij}:=$
\begin{equation}
\E\left[\frac{\frac{1}{\psi(\xi) }+V_{i-1}}{1-\frac{\xi}{\psi(\xi) }e^{-\psi(\xi) V_{i-1}}}\left(\frac{\xi}{\psi(\xi) }e^{-\psi(\xi) V_{i-1}}-Y_i\right)\right]\left[\frac{\frac{1}{\psi(\xi) }+V_{j-1}}{1-\frac{\xi}{\psi(\xi) }e^{-\psi(\xi) V_{j-1}}}\left(\frac{\xi}{\psi(\xi) }e^{-\psi(\xi) V_{j-1}}-Y_j\right)\right]\ .
\end{equation}
By conditioning on $V_{i-1}$ and using the fact that $\P(Y_i=1|V_{i-1})=\frac{\xi}{\psi(\xi) }e^{-\psi(\xi) V_{j-1}}$, where $\{Y_i|V_{i-1}\}$ is independent of $(V_{j-1},Y_j)$ for any $j< i$, we conclude that $s_{ij}=0$ for all $i< j$. Similarly, conditioning on $V_{j-1}$ yields that $s_{ij}=0$ for all $j> i$. We find that
\begin{equation}
\Var(\ell_n\apost (\psi(\xi) ))= \frac{1}{n^2}\sum_{i=1}^n\E\left[\frac{\frac{1}{\psi(\xi) }+V_{i-1}}{1-\frac{\xi}{\psi(\xi) }e^{-\psi(\xi) V_{i-1}}}\left(\frac{\xi}{\psi(\xi) }e^{-\psi(\xi) V_{i-1}}-Y_i\right)\right]^2 \ .
\end{equation}
Observe that, for $i=1,\ldots,n$,
\begin{align}
\nonumber m_i(v)&:=\E\left[\left(\frac{\frac{1}{\psi(\xi) }+V_{i-1}}{1-\frac{\xi}{\psi(\xi) }e^{-\psi(\xi) V_{i-1}}}\right)^2\left(\frac{\xi}{\psi(\xi) }e^{-\psi(\xi) V_{i-1}}-Y_i\right)^2\Big\rvert V_{i-1}=v\right]\\
\nonumber &= \left(\frac{\frac{1}{\psi(\xi) }+v}{1-\frac{\xi}{\psi(\xi) }e^{-\psi(\xi) v}}\right)^2\left(\frac{\xi^2}{\psi(\xi) ^2}e^{-2\psi(\xi) v}-2\frac{\xi}{\psi(\xi) }e^{-\psi(\xi) v}\E[Y_i|V_{i-1}=v]+\E[Y_i^2|V_{i-1}=v]\right) \\
\nonumber &= \left(\frac{\frac{1}{\psi(\xi) }+v}{1-\frac{\xi}{\psi(\xi) }e^{-\psi(\xi) v}}\right)^2\left(\frac{\xi^2}{\psi(\xi) ^2}e^{-2\psi(\xi) v}-2\frac{\xi^2}{\psi(\xi) ^2}e^{-2\psi(\xi) v}+\frac{\xi}{\psi(\xi) }e^{-\psi(\xi) v}\right) \\
&= \frac{\frac{\xi}{\psi(\xi) }e^{-\psi(\xi) v}\left(\frac{1}{\psi(\xi) }+v\right)^2}{1-\frac{\xi}{\psi(\xi) }e^{-\psi(\xi) v}} \ .\end{align}
Applying PASTA once more yields 
\begin{equation}\label{eq:asymp_score_variance}
n\Var(\ell_n\apost (\psi(\xi) ))=\frac{1}{N(t_n)}\sum_{i=1}^{N(t_n)}\E \,m_i(V(t_{i-1})){\asarrow}\E \,m_1(V)=I_\xi\ .
\end{equation}
Next we apply the martingale CLT of Lemma \ref{lemma:MCLT}, using the same notation as before for the corresponding martingale difference terms. For $i\in\{1,\ldots,n\}$, we denote
\begin{equation}
Z_i:=\frac{\left(\frac{1}{\psi(\xi) }+V_{i-1}\right)\left(\frac{\xi}{\psi(\xi) }e^{-\psi(\xi) V_{i-1}}-Y_i\right)}{1-\frac{\xi}{\psi(\xi) }e^{-\psi(\xi) V_{i-1}}}\ ,\:\:\:Z_{ni}:=\frac{Z_i}{\sqrt{\Var(L_n\apost (\psi(\xi) ))}}.
\end{equation} As $\E Z_i=0$, $L_n\apost (\psi(\xi) )=\sum_{i=1}^n Z_i$ is a martingale with respect to $\{V_1,\ldots,V_n\}$. For any $n$, $\Var(L_n\apost (\psi(\xi) ))>0$ and $\P(Z_i<\infty)=1$ as $\E Z_i=0$ for all $i\in\{1,\ldots,n\}$. Hence $\E\max_{1\leq i\leq n}Z_{ni}^2<\infty$ so that the condition \eqref{eq:M_mclt1_finite} holds. We have established that
\begin{equation}
\frac{1}{n}\Var(L_n\apost (\psi(\xi) ))=n\Var(\ell_n\apost (\psi(\xi) )){\asarrow} I_\xi<\infty\ ,
\end{equation}
and therefore $\sqrt{\Var(L_n\apost (\psi(\xi) ))}{\asarrow} \infty$. Furthermore, as $\P(Z_i<\infty)=1$,
\begin{equation}
\max_{1\leq i\leq n}|Z_{ni}|=\max_{1\leq i\leq n}\frac{|Z_i|}{\sqrt{\Var(L_n\apost (\psi(\xi) ))}} {\asarrow} 0
\end{equation}
and thus condition \eqref{eq:M_mclt2_maxtozero} is satisfied as well. Therefore, by Lemma \ref{lemma:MCLT},
\begin{equation}
\frac{L_n\apost (\psi(\xi) )}{\sqrt{\Var(L_n\apost (\psi(\xi) ))}} {\darrow} \mathrm{N}(0,1)\ .
\end{equation}
Recall that $\ell_n\apost (\psi(\xi) )={n}^{-1}\,L_n\apost (\psi(\xi) )$, so that
\begin{equation}
\frac{L_n\apost (\psi(\xi) )}{\sqrt{\Var(L_n\apost (\psi(\xi) ))}}=\frac{n\ell_n\apost (\psi(\xi) )}{\sqrt{\Var(n\ell_n\apost (\psi(\xi) ))}}=\frac{\sqrt{n}\ell_n\apost (\psi(\xi) )}{\sqrt{n\Var(\ell_n\apost (\psi(\xi) ))}}\ .
\end{equation}
Because $\sqrt{n\Var(\ell_n\apost (\psi(\xi) ))}{\asarrow} \sqrt{I_\xi}$, and applying Slutsky's lemma, we obtain the claimed result: $\sqrt{n}\ell_n\apost (\psi(\xi) ) {\darrow} \sqrt{I_\xi}\mathrm{N}(0,1).$
\end{proof}

\begin{proof}[Proof of Lemma \ref{lemma:Dn_limit}]
{Taking derivative of $\ell_n\apost(\psi)$, as given in \eqref{eq:loglikelihood_psi_deriv}, yields
\begin{equation}
\begin{split}
\ell_1''(\psi) &= -\frac{1}{\left(1-\frac{\xi}{\psi(\xi) }e^{-\psi(\xi) V_0}\right)^2}\Bigg[\frac{\xi}{\psi(\xi) }e^{-\psi(\xi) V_0}\left(\frac{1}{\psi(\xi) }+V_0\right)^2(1-Y_1 ) \\
&\quad +\frac{1}{\psi(\xi) ^2}\left(1-\frac{\xi}{\psi(\xi) }e^{-\psi(\xi) V_0}\right)\left(\frac{\xi}{\psi(\xi) }e^{-\psi(\xi) V_0}-Y_1 \right) \Bigg] \\
&= -\frac{\frac{\xi}{\psi(\xi) }e^{-\psi(\xi) V_0}\left(\frac{1}{\psi(\xi) }+v\right)^2}{1-\frac{\xi}{\psi(\xi) }e^{-\psi(\xi) V_0}} \ .
\end{split}
\end{equation}
The conditional expectation for any initial workload $V_0=v$ is
\begin{equation}
\begin{split}
\E[\ell''_1(\psi(\xi) )|V_0=v] &= -\frac{1}{\left(1-\frac{\xi}{\psi(\xi) }e^{-\psi(\xi) v}\right)^2}\Bigg[\frac{\xi}{\psi(\xi) }e^{-\psi(\xi) v}\left(\frac{1}{\psi(\xi) }+v\right)^2(1-\P_{\psi(\xi) }(Y_1|V_0=v) ) \\
&\quad +\frac{1}{\psi(\xi) ^2}\left(1-\frac{\xi}{\psi(\xi) }e^{-\psi(\xi) v}\right)\left(\frac{\xi}{\psi(\xi) }e^{-\psi(\xi) v}-\P_{\psi(\xi) }(Y_1|V_0=v) \right) \Bigg] \\
&= -\frac{\frac{\xi}{\psi(\xi) }e^{-\psi(\xi) v}\left(\frac{1}{\psi(\xi) }+v\right)^2}{1-\frac{\xi}{\psi(\xi) }e^{-\psi(\xi) v}} \ .
\end{split}
\end{equation}
Denote the limit of the second derivative, that is given by \eqref{eq:d2_loglikelihood_limit} in Lemma \ref{lemma:dlog_likelihood_limits}, at value $\psi$ by $k_2(\psi)$. Recall that by \eqref{eq:P_idle_transient}, $\P_{\psi(\xi) }(Y_1|V_0=v)=\frac{\xi}{\psi(\xi) }e^{-\psi(\xi) v}$, and thus by taking expectation with respect to the stationary workload we obtain
\begin{equation}
\ell_n''(\psi) {\asarrow} \E\left[\E[\ell_1''(\psi(\xi) )|V]\right]=k_2(\psi(\xi))=-I_{\xi}\ .
\end{equation}
Now consider the difference between $D_n$ and $-I_\xi$:
\begin{equation}
\begin{split}
|\,D_n-(-I_\xi)\,| &= \Big\rvert \,\int_{0}^{1}\left[\ell_n''(t\hat{\psi}_n+(1-t)\psi(\xi) )+I_\xi )\right] \,{\rm d}t\,\Big\rvert \\
&\leq \int_0^1\Big\rvert\ell_n{''}(t\hat{\psi}_n+(1-t)\psi(\xi) )+I_\xi\Big\rvert \ {\rm d}t \\
& \leq \sup_{t\in[0,1]}\Big\rvert\ell_n{''}(t\hat{\psi}_n+(1-t)\psi(\xi) )+I_\xi\Big\rvert \ .
\end{split}
\end{equation} 
By the consistency of the MLE established in Theorem \ref{thm:psi_MLE_consistency}, $t\hat{\psi}_n+(1-t)\psi(\xi) {\asarrow}\psi(\xi) $ for any $t\in[0,1]$. Hence, for any $\epsilon>0$ there exists some $N$ such that $|t\hat{\psi}_n+(1-t)\psi(\xi) -\psi(\xi) |<\epsilon$, for all $n\geq N$ almost surely, and therefore
\begin{equation}
\begin{split}
|D_n+I_\xi| & \leq \sup_{\psi:\ |\psi-\psi(\xi) |<\epsilon}\Big\rvert\ell_n{''}(\psi)+I_\xi\Big\rvert \\
&\leq \sup_{\psi:\ |\psi-\psi(\xi) |<\epsilon}\Big\rvert\ell_n{''}(\psi)-k_2(\psi)\Big\rvert+\sup_{\psi:\ |\psi-\psi(\xi) |<\epsilon}\Big\rvert k_2(\psi)+I_\xi\Big\rvert \ .
\end{split}
\end{equation} 
By Lemma \ref{lemma:dlog_likelihood_limits}, $\ell_n''(\psi){\asarrow}k_2(\psi)$. Consequently, the first term goes to zero almost surely. Let $\delta(\epsilon):=\sup_{\psi:\ |\psi-\psi(\xi) |<\epsilon}|k_2(\psi)+I_\xi|$, then (as $k_2$ is continuous and bounded on finite intervals) we have that $\delta(\epsilon)\to 0$ as $\epsilon\to 0$. We conclude that
\begin{equation}
|D_n+I_\xi|{\asarrow}0 \ ,
\end{equation}
and the result follows. 
}
\end{proof}

\begin{proof}[Proof of Theorem \ref{thm:psi_MLE_Z_asymp_norm}]
Let $\Omega_\xi(\alpha):=
{\xi}/({\xi-\varphi(\alpha)})$
and
\begin{equation}
\Xi_{\xi,i}(\alpha):=e^{-\alpha V_{i-1}}-\frac{\alpha}{{\psi(\xi)}}e^{-{\psi(\xi)} V_{i-1}}.
\end{equation}
As $\E\left[J_n({\psi(\xi)},\varphi(\alpha))\right]=0$,
\begin{equation}
\Cov(\sqrt{n}J_n({\psi(\xi)},\varphi(\alpha),\sqrt{n}(\hat{\psi}_n-{\psi(\xi)}))=n\E\left[J_n({\psi(\xi)},\varphi(\alpha))(\hat{\psi}_n-{\psi(\xi)})\right]\ .
\end{equation}
Recall that as $n\to\infty$, we almost surely have 
$
\sqrt{n}(\hat{\psi}_n-{\psi(\xi)})\approx {\sqrt{n}\ell_n\apost ({\psi(\xi)})}/{I_\xi}$,
and therefore
\begin{equation}
\lim_{n\to\infty}\Cov(\sqrt{n}J_n({\psi(\xi)},\varphi(\alpha)),\sqrt{n}(\hat{\psi}_n-{\psi(\xi)}))=\frac{1}{I_\xi}\lim_{n\to\infty}n\E\left[J_n({\psi(\xi)},\varphi(\alpha))\ell_n\apost ({\psi(\xi)})\right]\ .
\end{equation}
Direct computations yield that for any finite $n$ the expectation $\E\left[J_n({\psi(\xi)},\varphi(\alpha))\,\ell_n\apost ({\psi(\xi)})\right]$ equals
\begin{equation}
 \frac{1}{n^2}\sum_{i=1}^n\sum_{j=1}^n\E\left[ \frac{\left(e^{-\alpha V_i}-\Omega_\xi(\alpha) \Xi_{\xi,i}(\alpha)\right)\left(\frac{1}{{\psi(\xi)}}+V_{j-1}\right)\left(\frac{\xi}{{\psi(\xi)}}e^{-{\psi(\xi)} V_{j-1}}-Y_j\right)}{1-\frac{\xi}{{\psi(\xi)}}e^{-{\psi(\xi)} V_{j-1}}}\right] \ .
\end{equation}
Conditioning on $(V_0,\ldots,V_{i-1})$,
all terms in the above display with $i> j$ can be written as
\begin{equation}
 \frac{\left(\frac{1}{{\psi(\xi)}}+V_{j-1}\right)\left(\frac{\xi}{{\psi(\xi)}}e^{-{\psi(\xi)} V_{j-1}}-Y_j\right)}{1-\frac{\xi}{{\psi(\xi)}}e^{-{\psi(\xi)} V_{j-1}}}\E\left[e^{-\alpha V_i}-\Omega_\xi(\alpha) \Xi_{\xi,i}(\alpha) \Big\rvert V_{i-1}\right]=0\ ,
\end{equation}
and for the terms such that $i<j$, conditioning on $(V_0,\ldots,V_{j-1})$ yields
\begin{equation}
\left(e^{-\alpha V_i}-\Omega_\xi(\alpha) \Xi_{\xi,i}(\alpha)\right) \E\left[\frac{\left(\frac{1}{{\psi(\xi)}}+V_{j-1}\right)\left(\frac{\xi}{{\psi(\xi)}}e^{-{\psi(\xi)} V_{j-1}}-Y_j\right)}{1-\frac{\xi}{{\psi(\xi)}}e^{-{\psi(\xi)} V_{j-1}}} \Big\rvert V_{j-1}\right]=0\ .
\end{equation}
For $j=i$, we compute the expectation conditioned on $V_{i-1}$,
\begin{align}
\nonumber & \E\left[ \frac{\left(e^{-\alpha V_i}-\Omega_\xi(\alpha) \Xi_{\xi,i}(\alpha)\right)\left(\frac{1}{{\psi(\xi)}}+V_{i-1}\right)\left(\frac{\xi}{{\psi(\xi)}}e^{-{\psi(\xi)} V_{i-1}}-Y_i\right)}{1-\frac{\xi}{{\psi(\xi)}}e^{-{\psi(\xi)} V_{i-1}}}\Big\rvert V_{i-1}\right] \\
\nonumber &\ = \E\Bigg[ \frac{\frac{1}{{\psi(\xi)}}+V_{i-1}}{1-\frac{\xi}{{\psi(\xi)}}e^{-{\psi(\xi)} V_{i-1}}} \left(\frac{\xi}{{\psi(\xi)}}e^{-{\psi(\xi)} V_{i-1}}\E[e^{-\alpha V_i}|V_{i-1}]-\E[Y_i e^{-\alpha V_i}]\right) \\
\nonumber &\ \quad -\Omega_\xi(\alpha) \left( \frac{\Xi_{\xi,i}(\alpha)\left(\frac{1}{{\psi(\xi)}}+V_{i-1}\right)\E\left[\frac{\xi}{{\psi(\xi)}}e^{-{\psi(\xi)} V_{i-1}}-Y_i\Big\rvert V_{i-1}\right]}{1-\frac{\xi}{{\psi(\xi)}}e^{-{\psi(\xi)} V_{i-1}}}\right) \Bigg] \\
\nonumber &\ =\E\Bigg[\frac{\frac{1}{{\psi(\xi)}}+V_{i-1}}{1-\frac{\xi}{{\psi(\xi)}}e^{-{\psi(\xi)} V_{i-1}}} \left(\frac{\xi}{{\psi(\xi)}}e^{-{\psi(\xi)} V_{i-1}}\E[e^{-\alpha V_i}|V_{i-1}]-\P(Y_i=1,V_{i}=0)\right) \Bigg] \\
&\ = \E\Bigg[\frac{\xi^2\left(\frac{1}{{\psi(\xi)}}+V_{i-1}\right)e^{-{\psi(\xi)} V_{i-1}}}{{\psi(\xi)}(\xi-\varphi(\alpha))\left(1-\frac{\xi}{{\psi(\xi)}}e^{-{\psi(\xi)} V_{i-1}}\right)}\left(\Xi_{\xi,i}(\alpha)-\frac{\xi-\varphi(\alpha)}{\xi}\right) \Bigg] \ ,
 \end{align}
where the second equality follows from the fact that given $V_{i-1}$, $Y_i$ is a Bernoulli random variable with probability $\frac{\xi}{{\psi(\xi)}}e^{-{\psi(\xi)} V_{i-1}}$ and equals one only when $V_{i-1}=0$. Finally, by applying PASTA we conclude that
\begin{align}\nonumber
& \lim_{n\to\infty}\Cov(\sqrt{n}J_n({\psi(\xi)},\varphi(\alpha),\sqrt{n}(\hat{\psi}_n-{\psi(\xi)}))\ = \frac{1}{I_\xi}\lim_{n\to\infty}n\E\left[J_n({\psi(\xi)},\varphi(\alpha))\ell_n\apost ({\psi(\xi)})\right] \\
\nonumber &\ = \frac{\xi^2}{{\psi(\xi)}(\xi-\varphi(\alpha))I_\xi}\lim_{n\to\infty}\frac{1}{n}\sum_{i=1}^n\E\left[\frac{\left(\frac{1}{{\psi(\xi)}}+V_{i-1}\right)e^{-{\psi(\xi)} V_{i-1}}}{1-\frac{\xi}{{\psi(\xi)}}e^{-{\psi(\xi)} V_{i-1}}}\left(\Xi_{\xi,i}(\alpha)-\frac{\xi-\varphi(\alpha)}{\xi}\right) \right] \\
&\ = \frac{\xi^2}{{\psi(\xi)}(\xi-\varphi(\alpha))I_\xi}\E\left[\frac{\left(\frac{1}{{\psi(\xi)}}+V\right)e^{-{\psi(\xi)} V}}{1-\frac{\xi}{{\psi(\xi)}}e^{-{\psi(\xi)} V}}\left(e^{-\alpha V}-\frac{\alpha}{{\psi(\xi)}}e^{-{\psi(\xi)} V}-\frac{\xi-\varphi(\alpha)}{\xi}\right) \right] \ .
\end{align}
This completes the proof.
\end{proof}

\section{Proofs for Section \ref{sec:levy}}\label{sec:appC}

\begin{proof}[Proof of Proposition \ref{prop:tau_bias_bound}]
Consider the expected workload after an exponential time $T\sim\exp(\xi)$ given that the initial workload was $v$,
\[
\begin{split}
\E(V(T)|V(0)=v) &= \E\left[V(T)\mathbf{1}\left(\inf_{0\leq s\leq T}X(s)\geq -v\right)|V(0)=v\right] \\
& \ \ \ +\E\left[V(T)\mathbf{1}\left(\inf_{0\leq s\leq T}X(s)< -v\right)|V(0)=v\right] \\
&= \E\left[(v+X(T))\mathbf{1}\left(\inf_{0\leq s\leq T}X(s)\geq -v\right)|V(0)=v\right] \\
& \ \ \ +\P\left(\inf_{0\leq s\leq T}X(s)< -v\right)\E[V(T)|V(0)=0] \\
& \leq v+\E X(T)+\P\left(\inf_{0\leq s\leq T}X(s)< -v\right)\E[V(T)|V(0)=0]\ ,
\end{split}
\]
yielding, with the last step being due to \cite[Lemma 6.2]{book_DM2015}
\begin{align}
\nonumber \E(V(T)-v\,|\,V(0)=v)-\E X(T)&\:\leq\P\left(\inf_{0\leq s\leq T}X(s)< -v\right)\E[V(T)\,|\,V(0)=0]\\
&\:= e^{-\psi(\xi)v}\E[V(T)|V(0)=0]\ .
\end{align}
Consequently, for any threshold $\tau>0$ and all $v\ge \tau$,
\begin{equation}\label{eq:EV_tau_bound}
\E(V(T)-v\,|\,V(0)=v)-\E X(T)\leq \E[V(T)\,|\,V(0)=0]e^{-\psi(\xi)\tau }\ .
\end{equation}
The expected bias of $\hat{\vartheta}_m(\tau)$ is
\begin{align}
\nonumber b_m(\vartheta;\tau) &= \E\left[\hat{\vartheta}_m(\tau)-\vartheta\right]=\E\left[\frac{\xi}{m}\sum_{j=1}^m (V_{i(j)}-V_{i(j)-1})-\left(-\varphi\apost (0)\right)\right] \\
&= \frac{\xi}{m}\sum_{j=1}^m\E\left[ (V_{i(j)}-V_{i(j)-1})-\E X(T)\right] \ .
\end{align}
For every $j=1\ldots,m$, conditioning on $V_{i(j)-1}=v$ (with $v\geq \tau$), applying \eqref{eq:EV_tau_bound} yields
\begin{equation}
\E\left[ (V_{i(j)}-V_{i(j)-1})-\E X(T)\Big\rvert V_{i(j)-1}=v \right] \leq \E[V(T)|V(0)=0]e^{-\psi(\xi)\tau }\ ,
\end{equation}
and the bound \eqref{eq:bias_varphi_prime} follows by total expectation.
\end{proof}

\begin{proof}[Proof of Proposition \ref{prop:tau_psi_bias_bound}]
For any sample ${\boldsymbol V}=(V_0,\ldots,V_n)$ such that $n\geq m$, let
\begin{equation}
f(x,n):=\frac{\xi(V_{n}-V_0)}{n}-\frac{\xi}{n}\sum_{i=1}^n\frac{e^{-x V_{i-1}}}{x}\ ,
\end{equation}
then the estimator $\tilde{\psi}_m$ is given by solving \eqref{eq:levy_psi_root}, i.e., 
$\hat{\vartheta}_m(\tau)=f (\tilde{\psi}_m,M(m,\tau)).$ By \eqref{eq:EV_transient}, for any $\{M(m,\tau)=n\}$, 
\begin{equation}
\E[f(\psi(\xi),n)-f(\tilde{\psi}_m,n)] = \E\left[\hat{\vartheta}_m(\tau)+\varphi\apost (0)\right]\ ,
\end{equation}
thus, the upper bound \eqref{eq:bias_varphi_prime} for the bias of $\hat{\varphi}_m(\tau)$ we obtain yields 
\begin{equation}
\E[f(\psi(\xi),n)-f(\tilde{\psi}_m,n)] \leq \xi e^{-\psi(\xi)\tau}\E[V(T)|V(0)=0]\ .
\end{equation}
As $f(x,n)$ is continuous and monotone decreasing with $x$, this also means that the bias of $f(\tilde{\psi}_n,n)$ from $f(\psi(\xi),n)$ is bounded. The bound further holds when taking expectation over the possible values of $M(m,\tau)$. Moreover, if the threshold $\tau$ is increased, then the bias vanishes, and by continuity so does the bias of the estimator for $\tilde{\psi}$.
\end{proof}

\begin{proof}[Proof of Proposition \ref{prop:tau_asymp}]
We define \begin{equation}K(t):=\sum_{i=1}^{N(t)}\mathbf{1}(V(t_i)\geq \tau),\end{equation} 
where $N(t)$ is the Poisson sampling process and $t_i$ are the event times. By PASTA, as $t\to\infty$,
\begin{equation}
\frac{K(t)}{N(t)}=\frac{1}{N(t)}\sum_{i=1}^{N(t)}\mathbf{1}(V(t_i)\geq \tau){\parrow} \P(V>\tau)\ .
\end{equation}
Observe that $m=K(t_{M(m,\tau)})$, so that for $M(m,\tau)=n$,
\begin{equation}
\begin{split}
\hat{\vartheta}_m(\tau) &= \frac{\xi}{K(t_n)}\sum_{i=1}^{N(t_n)}(V(t_i)-V(t_{i-1}))\mathbf{1}(V(t_{i-1})\geq \tau) \\
&= \xi\frac{N(t_n)}{K(t_n)}\cdot \frac{1}{N(t_n)}\sum_{i=1}^{N(t_n)}(V(t_i)-V(t_{i-1}))\mathbf{1}(V(t_{i-1})\geq \tau) \ .
\end{split}
\end{equation}
The term ${N(t_n)}/{K(t_n)}$ converges to $({\P(V>\tau)})^{-1}$ as $n\to\infty$, and this also holds for $m\to\infty$ as $M(m,\tau)\to\infty$ almost surely in this case. By PASTA, the limiting joint distribution of the indicator and the increment is given by $(V(T)-V,\mathbf{1}(V\geq \tau))$, where $T\sim\exp(\xi)$. Therefore, we can apply Slutsky's lemma to conclude that the limiting distribution of the product is given by the product of the limiting distributions,
\begin{equation}
\hat{\vartheta}_m(\tau){\parrow}\xi\frac{\E\left[\E[(V(T)-V)\mathbf{1}(V\geq \tau)\,\rvert \,V]\right]}{\P(V>\tau)}\ .
\end{equation} 
By first conditioning on $V=v$ and then plugging in the explicit expectation of $\E[V(T)-v]$ given in \eqref{eq:EV_transient} we obtain that for $v\ge \tau$
\begin{equation}
\E\left[(V(T)-V)\mathbf{1}(V\geq \tau)\Big\rvert V=v\right] =
\frac{e^{-\psi(\xi)v}}{\psi(\xi)}-\frac{\varphi\apost (0)}{\xi}\ , 
\end{equation}
and $0$ else. 
Then the asymptotic estimator is
\begin{equation}
\vartheta(\tau) = \frac{\xi}{\P(V>\tau)}\int_{\tau}^{\infty}\left[\frac{e^{-\psi(\xi)v}}{\psi(\xi)}-\frac{\varphi\apost (0)}{\xi}\right]\ {\rm d}F_V(v) = \frac{\xi\E\left[e^{-\psi(\xi)V}\mathbf{1}(V\geq \tau)\right]}{\psi(\xi)\P(V>\tau)}-\varphi\apost (0)\ .
\end{equation}
\end{proof}

\end{appendices}


\begin{thebibliography}{}

\end{thebibliography}


\begin{thebibliography}{10}
\bibliographystyle{abbrv}


\bibitem{AA2013}
P.~Af{\`e}che and B.~Ata.
\newblock Bayesian dynamic pricing in queueing systems with unknown delay cost
  characteristics.
\newblock {\em Manufacturing \& Service Operations Management}, 15(2):292--304,
  2013.

\bibitem{AJP2013}
N.~Antunes, G.~Jacinto, and A.~Pacheco.
\newblock {Probing a M/G/1 queue with general Input and service times}.
\newblock {\em SIGMETRICS Perform. Eval. Rev.}, 41(3):34--36, Jan. 2014.

\bibitem{AJPW2016}
N.~Antunes, G.~Jacinto, A.~Pacheco, and C.~Wichelhaus.
\newblock {Estimation of the traffic intensity in a piecewise-stationary
  M$_t$/G$_t$/1 queue with probing}.
\newblock {\em SIGMETRICS Perform. Eval. Rev.}, 44(2):3--5, Sept. 2016.

\bibitem{AP2005}
M.~Armony and E.~L. Plambeck.
\newblock The impact of duplicate orders on demand estimation and capacity
  investment.
\newblock {\em Management Science}, 51(10):1505--1518, 2005.

\bibitem{ANP2017}
A.~Asanjarani, Y.~Nazarathy, and P.~K. Pollett.
\newblock Parameter and state estimation in queues and related stochastic
  models: A bibliography.
\newblock {\em ArXiv:1701.08338}, 2017.

\bibitem{BBR2009}
R.~Bekker, O.~J. Boxma, and J.~A.~C. Resing.
\newblock L{\'e}vy processes with adaptable exponent.
\newblock {\em Advances in Applied Probability}, 41(1):177--205, 2009.

\bibitem{B2011}
D.~Belomestny.
\newblock {Statistical inference for time-changed L\'{e}vy processes via
  composite characteristic function estimation}.
\newblock {\em The Annals of Statistics}, 39(4):2205--2242, 2011.

\bibitem{BP1999}
N.~H. Bingham and S.~M. Pitts.
\newblock {Nonparametric inference from M/G/l busy periods}.
\newblock {\em Communications in Statistics. Stochastic Models},
  15(2):247--272, 1999.

\bibitem{BNW2013}
N.~Blanghaps, Y.~Nov, and G.~Weiss.
\newblock {Sojourn time estimation in an M/G/$\infty$ queue with partial
  information}.
\newblock {\em Journal of Applied Probability}, 50(4):1044--1056, 2013.

\bibitem{BG2003}
B.~Buchmann and R.~Gr{\"u}bel.
\newblock {Decompounding: an estimation problem for Poisson random sums}.
\newblock {\em The Annals of Statistics}, 31(4):1054--1074, 2003.

\bibitem{book_DM2015}
K.~D\k{e}bicki and M.~Mandjes.
\newblock {\em Queues and L{\'e}vy Fluctuation Theory}.
\newblock Springer, 2015.

\bibitem{DG2004}
D.~Duffie and P.~Glynn.
\newblock {Estimation of continuous-time Markov processes sampled at random
  time intervals}.
\newblock {\em Econometrica}, 72(6):1773--1808, 2004.

\bibitem{book_F1996}
T.~S. Ferguson.
\newblock {\em A Course in Large Sample Theory}.
\newblock Chapman \& Hall, 1996.

\bibitem{GMW1993}
P.~W. Glynn, B.~Melamed, and W.~Whitt.
\newblock Estimating customer and time averages.
\newblock {\em Operations Research}, 41(2):400--408, 1993.

\bibitem{G2016}
A.~Goldenshluger.
\newblock {Nonparametric estimation of the service time distribution in the
  M/G/$\infty$ queue}.
\newblock {\em Advances in Applied Probability}, 48(4):1117--1138, 2016.

\bibitem{G2018}
A.~Goldenshluger.
\newblock {The M/G/$\infty$ estimation problem revisited}.
\newblock {\em Bernoulli}, 24(4A):2531--2568, 2018.

\bibitem{G2009}
S.~Gugushvili.
\newblock {Nonparametric estimation of the characteristic triplet of a
  discretely observed L{\'e}vy process}.
\newblock {\em Journal of Nonparametric Statistics}, 21(3):321--343, 2009.

\bibitem{book_HH1980}
P.~Hall and C.~C. Heyde.
\newblock {\em Martingale Limit Theory and its Application}.
\newblock Academic Press, 1980.

\bibitem{H2005}
M.~B. Hansen.
\newblock {Nonparametric estimation of the stationary M/G/1 workload distribution function}.
\newblock In {\em Proceedings of the 37th conference on Winter simulation},
  pages 869--877. Winter Simulation Conference, 2005.

\bibitem{HP2006}
M.~B. Hansen and S.~M. Pitts.
\newblock {Nonparametric inference from the M/G/1 workload}.
\newblock {\em Bernoulli}, 12:737--759, 2006.

\bibitem{book_H2016}
R.~Hassin.
\newblock {\em Rational Queueing}.
\newblock CRC Press, 2016.

\bibitem{book_HH2003}
R.~Hassin and M.~Haviv.
\newblock {\em {To Queue or Not to Queue: Equilibrium Behavior in Queueing
  Systems}}.
\newblock Springer, 2003.

\bibitem{HM1986}
R.~D.~H. Heijmans and J.~R. Magnus.
\newblock Consistent maximum-likelihood estimation with dependent observations:
  the general (non-normal) case and the normal case.
\newblock {\em Journal of Econometrics}, 32(2):253--285, 1986.

\bibitem{HM1986b}
R.~D.~H. Heijmans and J.~R. Magnus.
\newblock On the first--order efficiency and asymptotic normality of maximum
  likelihood estimators obtained from dependent observations.
\newblock {\em Statistica Neerlandica}, 40(3):169--188, 1986.

\bibitem{book_H1997}
C.~C. Heyde.
\newblock {\em Quasi-Likelihood and its Application: A General Approach to
  Optimal Parameter Estimation}.
\newblock Springer Science \& Business Media, 1997.

\bibitem{KBM2006}
O.~Kella, O.~Boxma, and M.~Mandjes.
\newblock {A L{\'e}vy process reflected at a Poisson age process}.
\newblock {\em Journal of Applied Probability}, 43(1):221--230, 2006.

\bibitem{book_K2006}
A.~Kyprianou.
\newblock {\em Introductory Lectures on Fluctuations of L{\'e}vy Processes with
  Applications}.
\newblock Springer, 2006.

\bibitem{NR2009}
M.~H. Neumann and M.~Rei{\ss}.
\newblock {Nonparametric estimation for L{\'e}vy processes from low-frequency
  observations}.
\newblock {\em Bernoulli}, 15(1):223--248, 2009.

\bibitem{NTV2006}
A.~Novak, P.~Taylor, and D.~Veitch.
\newblock The distribution of the number of arrivals in a subinterval of a busy
  period of a single server queue.
\newblock {\em Queueing Systems}, 53(3):105--114, 2006.

\bibitem{P1994}
S.~M. Pitts.
\newblock {Nonparametric estimation of the stationary waiting time distribution
  function for the GI/G/1 queue}.
\newblock {\em The Annals of Statistics}, 22(3):1428--1446, 1994.

\bibitem{RTP2007}
J.~V. Ross, T.~Taimre, and P.~K. Pollett.
\newblock Estimation for queues from queue length data.
\newblock {\em Queueing Systems}, 55(2):131--138, 2007.

\bibitem{SW2015}
S.~Schweer and C.~Wichelhaus.
\newblock {Nonparametric estimation of the service time distribution in the
  discrete-time GI/G/$\infty$ queue with partial information}.
\newblock {\em Stochastic Processes and their Applications}, 125(1):233--253,
  2015.

\bibitem{patent_SM2002}
D.~E. Smith and M.~Li.
\newblock {Estimating data delays from Poisson probe delays}, Aug.~6 2002.
\newblock US Patent 6,430,160.

\bibitem{book_vdV1998}
A.~W. van~der Vaart.
\newblock {\em Asymptotic Statistics}, volume~3.
\newblock Cambridge University Press, 1998.

\bibitem{Y1992}
D.~D. Yao.
\newblock {On Wolff's PASTA martingale}.
\newblock {\em Operations Research}, 40(3-supplement-2):S352--S355, 1992.

\end{thebibliography}
\end{document}